\date{May 2, 2019}
\theoremstyle{definition}
\newtheorem{theorem}{Theorem}[section]
\newtheorem{definition}[theorem]{Definition}
\newtheorem{notation}[theorem]{Notation}
\newtheorem{construction}[theorem]{Construction}
\newtheorem{proposition}[theorem]{Proposition}
\newtheorem{observation}[theorem]{Observation}
\newtheorem{corollary}[theorem]{Corollary}
\newtheorem{remark}[theorem]{Remark}
\newtheorem{lemma}[theorem]{Lemma}
\newtheorem{example}[theorem]{Example}
\newtheorem{question}[theorem]{Question}
\newtheorem*{acknowledgements}{Acknowledgements}
\newcommand*\cdef{\newcommand*}
\setlist{itemsep = 0pt}
\setlist[enumerate]{leftmargin=*} % actual item label width is taken into account
\setlist[enumerate, 1]{label=\upshape (\roman*), ref=(\roman*)}
\setlist[enumerate, 2]{label=\upshape (\alph*), ref=(\alph*)}
\cdef \DescriptionFormat [1]{%
	% format for labels in description-style lists
	\normalfont\emph{#1}%
}
\setlist[description]{format=\DescriptionFormat}
\cdef \Show [1]{%
	% show the definition of the macro of a given name
	% usage: \Show{foo }
	\expandafter\show\csname #1\endcsname
}
\cdef \AlignedMath [1]{%
	% multiline aligned displaymath in texstyle
	\begingroup
	
	\begin{align*}#1\end{align*}%
	\endgroup
	\ignorespaces
}
\let \O \undefined % originally capital letter O with stroke: Ø
\let \P \undefined % originally paragraph sign
\let \H \undefined % originally hungarian umlaut
\cdef \customfootnote [2]{%
	% a footnote with custom tag ignoring counters
	\begingroup
	\footnotetext{\textsuperscript{#1}#2}%
	\endgroup
}
\cdef \alt [2]{%
	% parallel alternatives in a structured sentence
	% usage: We call something \alt{A}{A'} if it satisfies \alt{B}{B'} and \alt{C}{C'}. For $X = {}$\alt{$A$}{$A'$}.
	% variants: #1 (or #2), #1 (resp. #2)
	#1 (resp.\ #2)%
}
\begin{document}
	% all the definitions are local

\cdef \DefUnicode [2]{%
	% define a Unicode character macro substitution
	% usage: \DefUnicode{α}{\alpha}
	\expandafter\cdef
		\csname u8:\detokenize{#1}\endcsname
		{#2}%
}

\cdef \UndefUnicode [1]{%
	% undefine a Unicode character macro substitution
	% usage: \DefUnicode{…}
	\expandafter\let
		\csname u8:\detokenize{#1}\endcsname
		\undefined
}

\cdef \ShowUnicode [1]{%
	% show the definition of Unicode character macro
	% usage: \ShowUnicode{α}
	\expandafter\show
		\csname u8:\detokenize{#1}\endcsname
}

\UndefUnicode{ } % sometimes I type a non-breakable space unintentionally, and it is invisible

\UndefUnicode{…} % used \textellipsis
\DefUnicode{…}{\ldots} % \ldots use \mathellipsis or \textellipsis

\DefUnicode{¬}{\lnot}

\DefUnicode{×}{\times}

%\DefUnicode{ı}{\i}
%\DefUnicode{ǐ}{\v\i}
%\DefUnicode{ĭ}{\u\i}

\DefUnicode{α}{\alpha}
\DefUnicode{β}{\beta}
\DefUnicode{γ}{\gamma}
\DefUnicode{δ}{\delta}
\DefUnicode{ε}{\varepsilon}
\DefUnicode{ζ}{\zeta}
\DefUnicode{η}{\eta}
\DefUnicode{θ}{\theta}
\DefUnicode{ι}{\iota}
\DefUnicode{κ}{\kappa}
\DefUnicode{λ}{\lambda}
\DefUnicode{μ}{\mu}
\DefUnicode{ν}{\nu}
\DefUnicode{ξ}{\xi}
\DefUnicode{π}{\pi}
\DefUnicode{ρ}{\rho}
\DefUnicode{σ}{\sigma}
\DefUnicode{ς}{\varsigma}
\DefUnicode{τ}{\tau}
\DefUnicode{υ}{\upsilon}
\DefUnicode{φ}{\varphi}
\DefUnicode{χ}{\chi}
\DefUnicode{ψ}{\psi}
\DefUnicode{ω}{\omega}

\DefUnicode{Γ}{\Gamma}
\DefUnicode{Δ}{\Delta}
\DefUnicode{Θ}{\Theta}
\DefUnicode{Λ}{\Lambda}
\DefUnicode{Ξ}{\Xi}
\DefUnicode{Π}{\Pi}
\DefUnicode{Σ}{\Sigma}
\DefUnicode{Φ}{\Phi}
\DefUnicode{Ψ}{\Psi}
\DefUnicode{Ω}{\Omega}

\DefUnicode{ℂ}{\CC}
\DefUnicode{ℍ}{\HH}
\DefUnicode{ℕ}{\NN}
\DefUnicode{ℙ}{\PP}
\DefUnicode{ℚ}{\QQ}
\DefUnicode{ℝ}{\RR}
\DefUnicode{ℤ}{\ZZ}

\DefUnicode{←}{\leftarrow}
\DefUnicode{→}{\rightarrow}
\DefUnicode{↛}{\nrightarrow}

\DefUnicode{∀}{\forall}
\DefUnicode{∃}{\exists}
\DefUnicode{∄}{\nexists}
\DefUnicode{∅}{\emptyset}
\DefUnicode{∈}{\in}
\DefUnicode{∉}{\notin}
\DefUnicode{∋}{\owns}
\DefUnicode{∌}{\notowns}
\DefUnicode{∏}{\prod}
\DefUnicode{∐}{\coprod}
\DefUnicode{∑}{\sum}
\DefUnicode{∘}{\circ}
\DefUnicode{∞}{\infty}
\DefUnicode{∧}{\wedge}
\DefUnicode{∨}{\vee}
\DefUnicode{∩}{\cap}
\DefUnicode{∪}{\cup}
\DefUnicode{≠}{\neq}
\DefUnicode{≤}{\leq}
\DefUnicode{≥}{\geq}
\DefUnicode{⊆}{\subseteq}
\DefUnicode{⊇}{\supseteq}
\DefUnicode{⊈}{\nsubseteq}
\DefUnicode{⊉}{\nsupseteq}
\DefUnicode{⊊}{\subsetneq}
\DefUnicode{⊋}{\supsetneq}
\DefUnicode{⋃}{\bigcup}
\DefUnicode{⋂}{\bigcap}

\DefUnicode{∂}{\partial}

\DefUnicode{⋅}{\cdot}

\DefUnicode{⟨}{\langle}
\DefUnicode{⟩}{\rangle}

	\input{macros/local_labels}
	
% family letters

\cdef \A {\mathcal{A}}
\cdef \B {\mathcal{B}}
\cdef \C {\mathcal{C}}
\cdef \D {\mathcal{D}}
\cdef \E {\mathcal{E}}
\cdef \F {\mathcal{F}}
\cdef \G {\mathcal{G}}
\cdef \H {\mathcal{H}}
\cdef \K {\mathcal{K}}
\cdef \M {\mathcal{M}}
\cdef \N {\mathcal{N}}
\cdef \O {\mathcal{O}}
\cdef \P {\mathcal{P}}
\cdef \R {\mathcal{R}}
\cdef \U {\mathcal{U}}
\cdef \V {\mathcal{V}}
\cdef \W {\mathcal{W}}
\cdef \Y {\mathcal{Y}}

\cdef \NN {\mathbb{N}}
\cdef \RR {\mathbb{R}}
\cdef \CC {\mathbb{C}}

% quantifiers

\cdef \holds {%
	% delimiter after final universal quantifier
	% usage: \forall x \in A\holds \phi(x)
	\colon
}

\cdef \st {%
	% delimiter after final existential quantifier
	% usage: \exists x \in A\st \phi(x)
	\colon
}

% builder notation

\cdef \given {%
	% abstract set/tuple builder notation separator
	% other names for the macro with limited usage: \st, \for, \sep
	% usage: \set{x_i \given i \in I}
	% variants: \colon, \mid, :
	\colon
}

\cdef \set [1]{%
	% set builder notation
	% usage: \set{x \given \phi(x)}, \set{x_1, x_2, x_3}
	\{#1\}%
}

\cdef \tuple [1]{%
	% tuple builder notation
	% usage: \tuple{x \given \phi(x)}, \tuple{x_1, x_2, x_3}
	% variants: \langle #1\rangle, (#1)
	(#1)%
}

% sets

\cdef \card [1]{%
	% cardinality of a set
	% usage: \card{A}
	% variants: \lvert #1\rvert, |#1| -- gives incorrect spacing
	\lvert #1\rvert
}

\cdef \powset [1]{%
	% the power set
	% usage: \powset{A}
	\mathcal{P}(#1)%
}

\cdef \subsets [2]{%
	% all subsets of #2 of size #1
	% usage: \subsets{\kappa}{A}, \subsets{<\kappa}{A}
	[#2]^{#1}%
}

\cdef \disunion {%
	% disjoint/tagged union binary operator
	% usage: A \disunion B
	\sqcup
}

\cdef \DisUnion {%
	% disjoint/tagged union unary operator
	\bigsqcup
}

\cdef \continuum {%
	% the cardinality of continuum
	% usage: \continuum
	\mathfrak{c}%
}

% tuples

\cdef \len [1]{%
	% length of a tuple
	% usage: \len{t}
	\lvert #1\rvert
}

\cdef \concat {%
	% the operator for concatenation of tuples
	% usage: t \concat s, t \concat \tuple{0}
	% variants: \sqcup, \oplus, raised \mathbin{\smallfrown}, {}^\frown
	{}^\frown
}

\cdef \Concat {%
	% the corresponding unary operator for concatenations of tuples
	% usage: \Concat_{n < \omega} t_n
	\bigsqcup
}

% mappings

\cdef \maps {%
	% function declaration syntax f: A -> B
	% usage: f\maps A \to B
	\colon
}

\cdef \into {%
	% an arrow variant for injective mappings or embeddings
	% usage: f\maps A \into B
	\hookrightarrow
}

\cdef \onto {%
	% an arrow variant for onto mappings
	% usage: f\maps A \onto B
	\twoheadrightarrow
}

\cdef \from {%
	% the other direction arrow
	% usage: f\maps B \from A
	\leftarrow
}

\cdef \id {%
	% the identity function
	% usage: \id, \id_X
	\operatorname{id}%
}

\cdef \dom {%
	% function or morphism domain
	% usage: \dom(f)
	\operatorname{dom}%
}

\cdef \rng {%
	% function range
	% usage: \rng(f)
	\operatorname{rng}%
}

\cdef \cod {%
	% function or morphism codomain
	% usage: \cod(f)
	\operatorname{cod}%
}

\cdef \im [1]{%
	% function image of a set
	% usage: f\im{A}
	% variants: (#1), [#1]
	%[#1]%
	[#1]%
}

\cdef \inv {%
	% function inverse
	% usage: f\inv
	^{-1}%
}

\cdef \preim [1]{%
	% function preimage of a set
	% usage: f\preim{A}
	\inv\im{#1}%
}

\cdef \fiber [1]{%
	% function fiber of a point
	% usage: f\fiber{y}
	% variants: \preim{\set{#1}}, \inv(#1)
	\inv(#1)%
}

\cdef \restr [1]{%
	% restriction of a function
	% usage: f\restr{A}
	% variants: \mathop{\upharpoonright}#1, \mathord{\upharpoonright}_{#1}
	\mathord{\upharpoonright}_{#1}%
}

\cdef \diag {%
	% binary diagonal product of mappings
	% usage: f \diag g
	\mathbin\vartriangle
}

\cdef \Diag {%
	% unary diagonal product of mappings
	% usage: \Diag_{i \in I} f_i
	\bigtriangleup
}

\cdef \codiag {%
	% binary codiagonal sum of mappings
	% usage: f \codiag g
	\mathbin\triangledown
}

\cdef \CoDiag {%
	% codiagonal sum of mappings
	% usage: \CoDiag_{i \in I} f_i
	\bigtriangledown
}

% topology

\cdef \homeo {%
	% is homeomorphic to
	% usage: X \homeo Y
	\cong
}

\cdef \nothomeo {%
	% is not homeomorphic to
	% usage: X \nothomeo Y
	\ncong
}

\cdef \clo [2][]{%
	% topological closure using overline
	% usage: \clo{A}
	\overline{#2}%
}

\cdef \cl {%
	% topological closure operator
	% usage: \cl(A), \cl_X(A), \cl_τ(A)
	\operatorname{cl}%
}

\cdef \topsum {%
	% binary topological sum operator
	% usage: X \topsum Y
	\oplus
}

\cdef \TopSum {%
	% unary topological sum operator
	% usage: \TopSum_{i ∈ I} X_i
	\sum
}

% metric spaces

\cdef \dist {%
	% metric distance
	% usage: \dist(x, y), \dist_X(x, y), \dist(x, A)
	\operatorname{d}%
}

\cdef \diam {%
	% metric diameter of a set
	% usage: \diam(A), \diam_X(A)
	\operatorname{diam}%
}

\cdef \mesh {%
	% mesh of a chain
	% usage: \mesh(\C)
	\operatorname{mesh}%
}

\cdef \conv {%
	% a sequence or a function converges to a limit
	% usage: $f(t) \conv_t x$ means $\lim_{t \to \infty} f(t) = x$
	\rightarrow
}

\cdef \biconv {%
	% two sequences or functions converge to each other
	% usage: $f(t) \biconv_t g(t)$ means $\lim_{t \to \infty} \dist(f(t), g(t)) = 0$
	\leftrightarrow
}

\cdef \abs [1]{%
	% the absolute value of a real number
	% usage: \abs{x}
	% variants: \lvert #1\rvert, |#1| -- gives incorrect spacing
	\lvert #1\rvert
}

% hyperspaces

\cdef \Closed [1]{%
	% the hyperspace of all closed subsets of a topological space
	% usage: \Closed{X}
	% variants: 2^{#1}, CL(#1), \C l(#1)
	\C l(#1)%
}

\cdef \Compacta [1]{%
	% the hyperspace of all compact subspaces of a topological space
	% usage: \Compacta{X}
	% variants: 2^{#1}, K(#1), \K(#1)
	\K(#1)%
}

\cdef \Continua [2][]{%
	% the hyperspace of all connected compact subspaces of a topological space
	% usage: \Continua{X}, \Continua[_ε]{X}, \Continua[_{>0}]{X}
	% variants: C(#2), \C(#2)
	\C#1(#2)%
}

\cdef \Cont {%
	% the family of all continuous functions
	% usage: \Cont(X, Y)
	% variants: C(#1), \C(#2)
	C%
}

\cdef \ContU {%
	% the space of all continuous functions with the topology of uniform convergence
	% usage: \ContU(X, Y)
	\C_{\operatorname{u}}%
}

\cdef \ContSurjU {%
	% the space of all continuous surjective functions with the topology of uniform convergence
	% usage: \ContSurjU(X, Y)
	\S_{\operatorname{u}}%
}

\cdef \ev {%
	% the canonical evaluation map C(X, Y) × X → Y
	% usage: \ev(f, x)
	\operatorname{ev}%
}

\cdef \img {%
	% the canonical image map C(X, Y) × K(X) → K(Y)
	% usage: \img(f, A), \img\im{\F × \A}
	\operatorname{img}%
}

\cdef \ImageMap [1]{%
	% for a map f: X → Y, the canonical map A ↦ f[A]
	% usage: \ImageMap{f}\maps \powset{X} \to \powset{Y}
	% variants: f^*, f_*, f^\rightarrow 
	#1^*%
}

\cdef \PreimageMap [1]{%
	% for a map f: X → Y, the canonical map A ↦ f^{-1}[A]
	% usage: \PreimageMap{f}\maps \powset{Y} \to \powset{X}
	% variants: f^{-1**}, f^*, f^\leftarrow
	#1^{-1**}%
}
\cdef \FiberMap [1]{%
	% for a map f: X → Y, the canonical map y ↦ f^{-1}(y)
	% usage: \ImageMap{f}\maps Y \to \powset{X}
	% variants: f^{-1*}, f^\mapsfrom
	#1^{-1*}%
}

% special notation

\cdef \Top {%
	% the category of topological spaces
	\mathbf{Top}%
}

\cdef \HComp {%
	% the category of Hausdorff compact spaces
	\mathbf{HComp}%
}

\cdef \Obj {%
	% the class of all domains and codomains of a class of morphisms
	% usage: \Obj(\family{F})
	\operatorname{Obj}%
}

\cdef \InvLim {%
	% inverse/projective limit
	% usage: \InvLim \D, \InvLim_n X_n
	% variants: \varprojlim, \operatorname{Lim}_\leftarrow
	\operatorname{Lim}_\leftarrow
}

\cdef \InvSeq {%
	% the class of all infinite inverse sequences with bonding maps in given family
	% usage: \InvSeq(\family{F})
	\operatorname{Seq}_\leftarrow
}

\cdef \LimSeq [1]{%
	% the class of all inverse limits of inverse sequences from a given class of maps
	% usage: \LimSeq{\F}
	% variants: \InvLim\im{\InvSeq(#1)}, \operatorname{LimSeq}_\leftarrow(#1)
	\operatorname{LimSeq}_\leftarrow(#1)%
}

\cdef \half {%
	% just the constant 1/2
	\frac{1}{2}%
}

\cdef \upset [1]{%
	% the upper set generated by the argument
	% usage: \upset{A}
	% variants: A^{\uparrow}, \uparrow A
	#1^{\uparrow}%
}

\cdef \downset [1]{%
	% the lower set generated by the argument
	% usage: \upset{A}
	% variants: A^{\downarrow}, \downarrow A
	#1^{\downarrow}%
}

\cdef \unions [1]{%
	% the family of all unions of the members of the given family
	% usage: \unions{A}
	#1^{⋃}%
}

\cdef \homeocopies [1]{%
	% the class of all homeomorphic copies of members of a given class
	% usage: \homeocopies{A}
	#1^{\homeo}%
}

\cdef \contimages [1]{%
	% the class of all continuous images of members of a given class
	% usage: \contimages{A}
	#1^{\twoheadrightarrow}%
}

\cdef \contpreimages [1]{%
	% the class of all continuous preimages of members of a given class
	% usage: \contpreimages{A}
	#1^{\twoheadleftarrow}%
}

\cdef \AllCompacta {%
	% the class of all (metrizable) compacta
	% usage: \AllCompacta{}
	% variants: \mathbf{K}, \mathbb{K}
	\mathbf{K}%
}

\cdef \AllContinua {%
	% the class of all (metrizable) continua
	% usage: \AllContinua{}
	% variants: \mathbf{C}, \mathbb{C}
	\mathbf{C}%
}

	% external references that may change
	
	\cdef \complexityFsigma
		% every F_σ subset of the hyperspace of all compacta is equivalent to a closed subset
		{\cite[Theorem~3.6]{Bartos}}
	
	\cdef \complexityClosed
		% saturated family is almost never closed
		{\cite[Observation~4.3]{Bartos}}

	\title{Compactifiable classes of compacta}
	\author{%
		A. Bartoš\textsuperscript{*,1,2}, 
		J. Bobok\textsuperscript{3}, 
		J. van Mill\textsuperscript{4},
		P. Pyrih\textsuperscript{1}, 
		B. Vejnar\textsuperscript{1}
	}
	\customfootnote{*}{e-mail: \texttt{drekin@gmail.com}}
	\customfootnote{1}{Charles University, Faculty of Mathematics and Physics, Department of Mathematical Analysis}
	\customfootnote{2}{Czech Academy of Sciences, Institute of Mathematics, Department of Abstract Analysis}
	\customfootnote{3}{Czech Technical University in Prague, Faculty of Civil Engineering, Department of Mathematics}
	\customfootnote{4}{University of Amsterdam, Faculty of Science, Korteweg–de Vries Institute for Mathematics}
	
	\maketitle
	
	\vspace{-1.5em}
	\begin{center} \em
		Dedicated to the memory of Petr Simon, \\ member of Seminar on Topology at Charles University.
	\end{center}
	\vspace{0em} % makes a difference when the abstract is present
	
	\begin{abstract}
		We introduce the notion of compactifiable classes – these are classes of metrizable compact spaces that can be up to homeomorphic copies “disjointly combined” into one metrizable compact space.
		This is witnessed by so-called compact composition of the class.
		Analogously, we consider Polishable classes and Polish compositions.
		The question of compactifiability or Polishability of a class is related to hyperspaces.
		Strongly compactifiable and strongly Polishable classes may be characterized by the existence of a corresponding family in the hyperspace of all metrizable compacta.
		We systematically study the introduced notions – we give several characterizations, consider preservation under various constructions, and raise several questions.
		
		\begin{description}
			\item[Classification:]
				54D80, % Special constructions of spaces (spaces of ultrafilters, etc.)
				54H05, % Descriptive set theory (topological aspects of Borel, analytic, projective, etc. sets)
				54B20, % Hyperspaces
				54E45, % Compact (locally compact) metric spaces
				54F15. % Continua and generalizations
			
			\item[Keywords:] Compactifiable class, Polishable class, homeomorphism equivalence, metrizable compactum, Polish space, hyperspace, complexity, universal element, common model, inverse limit.
		\end{description}
	\end{abstract}
	
	\linespread{1.2}\selectfont % the space between lines

\section{Introduction}
	
	Let us consider two classes $\C$ and $\D$ of topological spaces (not necessarily closed under homeomorphic copies). We say that these classes are \emph{equivalent} (and we write $\C \homeo \D$) if every space in $\C$ is homeomorphic to a space in $\D$ and vice versa.
	
	Given a class $\C$ of metrizable compacta, we are interested whether $\C$ (up to the equivalence) can be disjointly composed into one metrizable compactum such that the corresponding quotient space is also a metrizable compactum.
	In our terminology introduced below, we ask whether the class $\C$ is \emph{compactifiable}.
	If $\C$ is a class of continua, this is equivalent to finding a metrizable compactum whose set of connected components is equivalent to $\C$ (see Observation~\ref{thm:components}).
	
	Original motivation comes from our interest in spirals \cite{BMPV} and from the construction of Minc~\cite{Minc}, who for each nondegenerate metric continuum $X$ constructed a metrizable compactum $K$ whose components form a pairwise non-homeomorphic family of spirals over $X$ with the decomposition space being $2^ω$, and asked \cite[Question~1]{Minc} whether there is a metrizable compactum $K$ whose set of components is equivalent to the class of all spirals over $X$, i.e. whether the class of all spirals over $X$ is compactifiable.
	So compactifiability of a class may be viewed as a dual condition to the existence of a metrizable compactum whose components from a pairwise non-homeomorphic subfamily of the class.
	Minc~\cite[Question~2]{Minc} also asked whether both conditions may be realized at the same time and/or whether the resulting decomposition may be continuous. This latter property corresponds to our notion of \emph{strongly compactifiable classes}.
	
	In Section~\ref{sec:compositions} of our paper we define \emph{compactifiable} and \emph{Polishable} classes and their witnessing \emph{compositions}. We consider several basic constructions of compositions, and we obtain several conditions equivalent to compactifiability and Polishability (Theorem~\ref{thm:compactifiable_characterization} and \ref{thm:Polishable_characterization}).
	
	In Section~\ref{sec:hyperspaces} we study connections between compactifiable or Polishable classes and hyperspaces.
	The Hilbert cube $[0, 1]^ω$ is universal for metrizable compacta, so a class of metrizable compacta may be realized as a subset of the hyperspace $\Compacta{[0, 1]^ω}$.
	We define \emph{strongly compactifiable} and \emph{strongly Polishable} classes, and characterize them by the existence of an equivalent family $\F ⊆ \Compacta{[0, 1]^ω}$ of a suitable complexity – closed or equivalently $F_σ$ for strong compactifiability and $G_δ$ or equivalently analytic for strong Polishability (Theorem~\ref{thm:strongly_compactifiable_characterization} and \ref{thm:strongly_Polishable_characterization}).
	Note that if a class $\C$ closed under homeomorphic copies is strongly compactifiable, $\C ∩ \Compacta{[0, 1]^ω}$ is not necessarily closed – there is only an equivalent closed family $\F ⊆ \Compacta{[0, 1]^ω}$.
	This leads to considering descriptive complexity of subsets of $\Compacta{[0, 1]^ω}$ up to the equivalence.
	The first author further develops this topic in \cite{Bartos}.
	
	In Section~\ref{sec:induced} we study preservation of the properties under various constructions, and consequently we obtain several examples.
	Among other results we prove the following.
	The four introduced properties are stable under countable unions.
	Every hereditary class of metrizable compacta or continua with a universal element is strongly compactifiable,
	and every class of metrizable \alt{compacta}{continua} closed under continuous images with a common model (i.e. a member of the class that continuously maps onto every other member of the class) is \alt{strongly Polishable}{compactifiable}.
	For every strongly Polishable class $\C$ closed under homeomorphic copies and every Polish space $X$, the set $\C ∩ \Compacta{X}$ is analytic – this gives a necessary condition.
	
	We may view the properties of being strongly compactifiable, compactifiable, strongly Polishable, and Polishable as degrees of complexity – classes of metrizable compacta that are compactifiable are “more comprehensible” than classes that are not compactifiable.
	A different measure of complexity of a class $\C$ of metrizable compacta is the complexity of the corresponding classification problem, i.e. the \emph{Borel reducibility}~\cite[Chapter~5]{Gao} of the homeomorphism relation $\homeo_\C$.
	However, we first need to realize $\C$ as a standard Borel space in a natural way, e.g. as a subset of the hyperspace $\Compacta{[0, 1]^ω}$.
	That means this notion formally depends on the choice of such natural coding, even though it is a common belief that the particular natural coding does not matter in fact.
	See for example \cite[Theorem~14.1.3]{Gao}.
	
	Another inspiration for our study was the construction of a universal arc-like continuum \cite[Theorem~12.22]{Nadler}.
	In Section~\ref{sec:limits} we modify this construction and prove that for every countable family $\P$ of metrizable compacta, the class of all $\P$-like spaces is compactifiable.
	We also argue that a compact composition may be viewed as a weaker form of a universal element for the class.
	
	Several questions remain open.
	We do not have any particular example distinguishing between the four properties (Question~\ref{q:distinguish}), we have just some candidates.
	Also, the compactifiability of spirals remains open.

\section{Compositions} \label{sec:compositions}
	
	In this section we formally define \emph{compactifiable} and \emph{Polishable classes} and the witnessing \emph{compositions}.
	We describe several constructions of compositions and give some characterizations of compactifiability and Polishability.
	We also observe that compactifiable and Polishable classes are stable under countable unions.
	In particular, every countable class of metrizable compacta is compactifiable.
	
	\medskip
	
	The idea of disjointly composing topological spaces is captured by the following notion.
	
	\begin{definition}
		A \emph{composition} $\A$ consists of a continuous map $q\maps A \to B$ between topological spaces.
		In this context, $A$ is called the \emph{composition space}, $B$ is called the \emph{indexing space}, and $q$ is called the \emph{composition map}.
		The idea is that the composition map $q$ captures how its fibers are composed in the composition space $A$.
		The notation $\A(q\maps A \to B)$ means that $\A$ is a composition with composition space $A$, indexing space $B$, and composition map $q$.
		
		The following language gives us some flexibility when working with compositions.
		\begin{itemize}
			\item $\A$ is a \emph{composition of an indexed family of topological spaces} $\tuple{A_b}_{b ∈ B}$ if $q\fiber{b} = A_b$ for every $b ∈ B$.
			Of course the family $\tuple{A_b}_{b ∈ B}$ is a decomposition of $A$ (i.e.\ $A_b ∩ A_{b'} = ∅$ for every $b ≠ b' ∈ B$ and $⋃_{b ∈ B} A_b = A$) and is determined by $\A$.
			On the other hand, every decomposition $\tuple{A_b}_{b ∈ B}$ of a topological space $A$ induces the unique map $q\maps A \to B$ with fibers $\tuple{A_b}_{b ∈ B}$ and the composition $\A(q\maps A \to B)$ if the map $q$ is continuous.
		
			\item $\A$ is a \emph{composition of an indexed family of embeddings} $\tuple{e_b\maps A_b \into A}_{b ∈ B}$ if $q\fiber{b} = \rng(e_b)$ for every $b ∈ B$.
			Again, $\tuple{\rng(e_b)}_{b ∈ B}$ is necessarily a decomposition of $A$.
			
			\item $\A$ is a \emph{composition of a class of topological spaces} $\C$ if the family $\set{q\fiber{b}: b ∈ B}$ is equivalent to $\C$.
		\end{itemize}
		We are interested in the following special types of compositions.
		\begin{itemize}
			\item $\A$ is a \emph{compact composition} if both $A$ and $B$ are metrizable compacta.
			\item $\A$ is a \emph{Polish composition} if both $A$ and $B$ are Polish spaces.
		\end{itemize}
	\end{definition}
	
	\begin{remark}
		In \cite{Minc} P.\ Minc constructed a compact composition of a $2^ω$-indexed family of pairwise non-homeomorphic compactifications of a ray with remainders being copies of an arbitrary fixed nondegenerate metrizable continuum.
	\end{remark}
	
	\begin{remark}
		Given a composition $\A(q\maps A \to B)$ of a family $\tuple{A_b}_{b ∈ B}$, the spaces $A_b$ are all nonempty if and only if the composition map $q$ is surjective.
	\end{remark}
	
	\begin{definition}
		A class $\C$ of topological spaces is called \alt{\emph{compactifiable}}{\emph{Polishable}} if there is a \alt{compact}{Polish} composition of $\C$, i.e.\ if there is a continuous map $q\maps A \to B$ between \alt{metrizable compacta}{Polish spaces} such that $\set{q\fiber{b}: b ∈ B} \homeo \C$.
		Note that the spaces $q\fiber{b}$ are necessarily metrizable \alt{compacta}{Polish spaces}.
	\end{definition}
	
	\begin{construction}[rectangular composition] \label{con:rectangular}
		Let $A$, $B$ be topological spaces and let $F ⊆ A × B$. By $F^b$ we denote the subset of $A$ corresponding to the section of $F$ through $b$, i.e.\ $F^b = \set{a ∈ A \given \tuple{a, b} ∈ F}$. For every $b ∈ B$ let $e_b$ denote the canonical embedding $F^b \to F^b × \set{b} ⊆ F$.
		The set $F$ induces the composition $\A_F(π_B\restr{F}\maps F \to B)$ of the family $\tuple{e_b}_{b ∈ B}$.
		If the spaces $A$, $B$ are \alt{metrizable compacta}{Polish spaces} and the set $F$ is \alt{closed}{$G_δ$} in $A × B$, then the composition $\A_F$ is \alt{compact}{Polish}.
		
		Moreover, every composition can essentially be obtained this way.
		For a composition $\A(q\maps A \to B)$ we consider the graph of $q$, $G = \set{\tuple{a, q(a)}: a ∈ A} ⊆ A × B$, which is closed if $B$ is Hausdorff.
		Since $A$ is homeomorphic to $G$ and $G^b = q\fiber{b}$ for every $b ∈ B$, the compositions $\A$ and $\A_G$ are essentially the same.
	\end{construction}
	
	\begin{construction}[pullback composition] \label{con:pullback}
		Let $\A(q\maps A \to B)$ be a composition and let $f\maps B' \to B$ be a continuous map.
		The \emph{pullback of $\A$ along $f$} is the composition $\A'(q'\maps A' \to B')$ where $A' := \set{\tuple{a, b'} ∈ A × B': q(a) = f(b')}$ and $q' := π_{B'}\restr{A'}$, so $\A'$ is the rectangular composition induced by $A' ⊆ A × B'$.
		
		If $\A$ is a composition of spaces $\tuple{A_b}_{b ∈ B}$, then $\A'$ is essentially a composition of $\tuple{A_{f(b')}}_{b' ∈ B'}$ since for every $b' ∈ B'$ we have the canonical embedding $e_{b'}\maps A_{f(b')} \to A_{f(b')} × \set{b'} ⊆ A'$ and so $\A'$ is formally a composition of $\tuple{e_{b'}}_{b' ∈ B'}$.
		This way we change the indexing space so that each space $A_b$ has $f\fiber{b}$-many copies in $A'$.
		
		Moreover, $A'$ is a closed subset $A × B'$ if $B$ is Hausdorff.
		Hence, if $\A$ is a \alt{compact}{Polish} composition and $B'$ is \alt{a metrizable compactum}{a Polish space}, then $\A'$ is a \alt{compact}{Polish} composition as well.
	\end{construction}
	
	\begin{corollary}[subcomposition] \label{thm:subcomposition}
		If $\A(q\maps A \to B)$ is a \alt{compact}{Polish} composition of spaces $\tuple{A_b}_{b ∈ B}$ and $C ⊆ B$ is \alt{$F_σ$}{analytic}, then the class $\set{A_c: c ∈ C}$ is \alt{compactifiable}{Polishable}.
		
		\begin{proof}
			In the compact case with closed $C ⊆ B$, it is enough to consider the \emph{induced subcomposition} $\A_C(q\maps q\preim{C} \to C)$, which may be viewed as a special case of the pullback construction.
			If $C = ⋃_{n ∈ ω} C_n$ for some closed sets $C_n ⊆ B$, then $\set{A_c: c ∈ C}$ is a countable union of compactifiable classes, which is compactifiable as we will show later (Observation~\ref{thm:countable_union_of_compositions}).
			In the Polish case, there is a Polish space $B'$ and a continuous surjection $f\maps B' \onto C$, so the pullback of $\A$ along $f$ is a Polish composition of $\set{A_{f(b')}: b' ∈ B'} = \set{A_c: c ∈ C}$.
		\end{proof}
	\end{corollary}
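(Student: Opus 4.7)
The plan is to handle the compact and Polish cases essentially separately, reducing each to the constructions already developed (rectangular and pullback compositions).

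For the compact case, I would first treat a closed $C ⊆ B$. Here the natural move is to restrict the composition: let $A_C := q\preim{C}$, which is closed in the metrizable compactum $A$, hence itself a metrizable compactum. Then $q\restr{A_C}\maps A_C \to C$ is a continuous map between metrizable compacta whose fibers are precisely $\set{A_c \given c ∈ C}$, so it is a compact composition of $\set{A_c \given c ∈ C}$. (Equivalently, this is the pullback of $\A$ along the inclusion $C \into B$, which by Construction~\ref{con:pullback} is a compact composition since $C$ is a metrizable compactum.) For general $F_σ$ sets $C = ⋃_{n ∈ ω} C_n$ with each $C_n ⊆ B$ closed, the previous step yields compactifiability of each $\set{A_c \given c ∈ C_n}$, and then $\set{A_c \given c ∈ C}$ is the countable union of compactifiable classes. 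I would therefore invoke the forthcoming Observation~\ref{thm:countable_union_of_compositions} to conclude; that is the only nonelementary input needed here, and the only \emph{tension} in the argument, since it is a forward reference inside the same section.

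For the Polish case, the key observation is the standard characterization of analytic sets: $C ⊆ B$ is analytic precisely when there exists a Polish space $B'$ and a continuous surjection $f\maps B' \onto C$. Composing with the inclusion $C \into B$ gives a continuous map $f\maps B' \to B$ with range exactly $C$. Applying Construction~\ref{con:pullback} produces a composition $\A'(q'\maps A' \to B')$ which is Polish (both $A'$ and $B'$ are Polish, the former because it is a $G_δ$, in fact closed, subset of the Polish space $A × B'$). By that construction, $\A'$ is a composition of the family $\tuple{A_{f(b')}}_{b' ∈ B'}$, and since $f$ maps $B'$ onto $C$ we have $\set{A_{f(b')} \given b' ∈ B'} = \set{A_c \given c ∈ C}$, so $\A'$ witnesses Polishability.

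The only conceptual obstacle I anticipate is the bookkeeping needed for the $F_σ$ case, which depends on an as-yet-unproved stability-under-countable-unions statement; apart from that, both cases are immediate applications of Constructions~\ref{con:rectangular} and~\ref{con:pullback} once one recalls that analytic sets are exactly continuous images of Polish spaces. A minor point worth checking, rather than a real difficulty, is that pulling back preserves the assignment of fibers up to homeomorphism, which follows from the canonical embeddings $A_{f(b')} \to A_{f(b')} × \set{b'}$ built into Construction~\ref{con:pullback}.
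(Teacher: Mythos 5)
Your proposal is correct and follows essentially the same route as the paper: the induced subcomposition $q\restr{q\preim{C}}\maps q\preim{C} \to C$ for closed $C$, a reduction of the $F_\sigma$ case to the forthcoming countable-union observation, and in the Polish case a pullback along a continuous surjection $f\maps B' \onto C$ from a Polish space. The one forward reference you flag is also present in the paper's own proof, so there is no gap.
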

	
	\begin{remark}
		We always consider an analytic set as a subset of a Polish space.
		By \emph{analytic space} we mean any topological space that arises from an analytic set endowed with the corresponding subspace topology, i.e. a metrizable continuous image of a Polish space.
		However, in the following constructions (like in Lemma~\ref{thm:analytic_pullback}) we in fact do not need the metrizability, so the propositions would remain valid even for non-metrizable continuous images of Polish spaces.
	\end{remark}
	
	\begin{lemma} \label{thm:analytic_pullback}
		Let $A$ be a Polish space, let $B$ be an analytic space, let $F ⊆ A × B$ be a $G_δ$ subset, and let $\A_F(q\maps F \to B)$ be the corresponding rectangular composition.
		Moreover, let $B'$ be a Polish space and let $f\maps B' \to B$ be a continuous map.
		The pullback $\A'(q'\maps F' \to B')$ of $\A_F$ along $f$ is a Polish composition.
		
		\begin{proof}
			We need to show that the composition space $F'$ is Polish.
			We have $F' = \set{\tuple{\tuple{a, b}, b'} ∈ (A × B) × B': \tuple{a, b} ∈ F$ and $b = f(b')}$, which is canonically homeomorphic to $G := \set{\tuple{a, b'} ∈ A × B': \tuple{a, f(b')} ∈ F} = g\preim{F}$ where $g := \id_A × f \maps A × B' \to A × B$.
			Since $F$ is $G_δ$ in $A × B$, $G$ is $G_δ$ in the Polish space $A × B'$.
		\end{proof}
	\end{lemma}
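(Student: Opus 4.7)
The plan is to reduce everything to showing that the composition space $F'$ is Polish; continuity of $q'$ and Polishness of $B'$ are built into the pullback construction, so this single Polishness claim is the whole content of the lemma.

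First I would unpack $F'$. The composition map of $\A_F$ is $q = \pi_B\restr{F} \maps F \to B$, so Construction~\ref{con:pullback} gives
$$F' = \set{\tuple{\tuple{a, b}, b'} \in F × B' \given b = f(b')}.$$
Once $b'$ is fixed the coordinate $b$ is forced to equal $f(b')$, hence it is redundant. This suggests projecting away the middle entry to land in $A × B'$. Concretely, set
$$G := \set{\tuple{a, b'} \in A × B' \given \tuple{a, f(b')} \in F}.$$
The map $\tuple{\tuple{a, b}, b'} \mapsto \tuple{a, b'}$ with inverse $\tuple{a, b'} \mapsto \tuple{\tuple{a, f(b')}, b'}$ is a pair of continuous mutual inverses (the inverse is continuous because $f$ is), giving a homeomorphism $F' \homeo G$.

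Next I would recognize $G$ as $g\preim{F}$ for the continuous map $g := \id_A × f \maps A × B' \to A × B$. Writing $F$ as a countable intersection of sets open in $A × B$, its preimage $G$ is a countable intersection of sets open in $A × B'$, i.e.\ a $G_δ$ subset of the Polish space $A × B'$, and therefore Polish. Combined with the homeomorphism $F' \homeo G$, this shows $F'$ is Polish, completing the proof.

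There is no serious obstacle. The one mildly delicate point is that $B$ is only assumed analytic, so $A × B$ itself need not be Polish; but this is immaterial, because we never require Polishness of the codomain of $g$, only continuity of $g$ together with the $G_δ$ structure of $F$ (defined from open sets of $A × B$), and continuous maps pull open sets back to open sets regardless. In fact the argument would work verbatim with $B$ merely Hausdorff, in line with the flexibility flagged in the preceding Remark.
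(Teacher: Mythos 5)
Your proposal is correct and follows essentially the same route as the paper: identify $F'$ with $G = g\preim{F}$ for $g = \id_A × f$, then observe that $G$ is $G_δ$ in the Polish space $A × B'$. The extra details you supply (the explicit mutually inverse continuous maps, and the observation that analyticity of $B$ is never actually used) are consistent with the paper's argument and its surrounding remark.
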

	
	By combining the previous observations we obtain the following characterizations.
	
	\begin{theorem} \label{thm:compactifiable_characterization}
		The following conditions are equivalent for a class $\C$ of topological spaces.
		\begin{enumerate}
			\item $\C$ is compactifiable.
			\item There is a metrizable compactum $A$ and a closed equivalence relation $E ⊆ A × A$ such that $\set{E^a: a ∈ A} \homeo \C \setminus \set{∅}$.
			\item There is a metrizable compactum $A$, a metrizable $σ$-compact space $B$, and a closed set $F ⊆ A × B$ such that $\set{F^b \given b ∈ B} \homeo \C$.
			\item There is a closed set $F ⊆ [0, 1]^ω × 2^ω$ such that $\set{F^b: b ∈ 2^ω} \homeo \C$, or $\C = ∅$.
		\end{enumerate}
	\end{theorem}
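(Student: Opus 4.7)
My strategy is to establish the cycle (i) $\Rightarrow$ (ii) $\Rightarrow$ (iii) $\Rightarrow$ (i) together with the equivalence (i) $\Leftrightarrow$ (iv). Each implication should come directly from Construction~\ref{con:rectangular}, Construction~\ref{con:pullback}, or the forward-referenced Observation~\ref{thm:countable_union_of_compositions} on countable unions; the degenerate case $\C = \emptyset$ will be tracked separately throughout.

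For (i) $\Rightarrow$ (ii), given a compact composition $q\maps A \to B$ of $\C$, I form $E := \set{(a, a') \in A \times A \given q(a) = q(a')}$. This is closed since $B$ is Hausdorff, and its classes $E^a = q\fiber{q(a)}$ enumerate precisely the nonempty fibers of $q$, so $\set{E^a \given a \in A} \homeo \C \setminus \set{\emptyset}$. For (ii) $\Rightarrow$ (iii), I take $B := A$ and $F := E$; if $\emptyset \in \C$, I first enlarge $B$ to $A \sqcup \set{*}$ (with $*$ isolated, keeping $F = E \subseteq A \times A$ closed in $A \times B$) in order to pick up the missing empty fiber $F^* = \emptyset$.

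For (iii) $\Rightarrow$ (i), I write $B = \bigcup_n K_n$ with $K_n$ compact and ascending. Each restriction $F \cap (A \times K_n) \to K_n$ is a closed subset of the metrizable compactum $A \times K_n$ and so constitutes a compact rectangular composition. Hence $\C \homeo \bigcup_n \set{F^b \given b \in K_n}$ is a countable union of compactifiable classes, and Observation~\ref{thm:countable_union_of_compositions} yields a single compact composition of $\C$.

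Finally, (iv) $\Rightarrow$ (i) is immediate: a closed $F \subseteq [0,1]^\omega \times 2^\omega$ together with the projection to $2^\omega$ is already a compact composition (and $\C = \emptyset$ is handled by $A = B = \emptyset$). For (i) $\Rightarrow$ (iv) with $\C \neq \emptyset$, I embed $A$ into $[0,1]^\omega$ and fix a continuous surjection $h\maps 2^\omega \onto B$, which exists since $B$ is a nonempty metrizable compactum; the pullback of $\A$ along $h$ (Construction~\ref{con:pullback}) sits as a closed subset of $[0,1]^\omega \times 2^\omega$ whose fibers coincide with those of $q$. The main subtlety throughout is the bookkeeping around $\emptyset \in \C$, since (ii) realizes only $\C \setminus \set{\emptyset}$ whereas (iii) and (iv) must realize $\C$ itself.
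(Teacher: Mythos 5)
Your proof is correct and follows essentially the same route as the paper's: the induced equivalence $E = (q \times q)\preim{\Delta_B}$ for (i)$\Rightarrow$(ii), the single isolated point to recover an empty fiber for (ii)$\Rightarrow$(iii), the decomposition of a $\sigma$-compact $B$ into compacta plus Observation~\ref{thm:countable_union_of_compositions} for (iii)$\Rightarrow$(i), and the pullback along a surjection $2^\omega \onto B$ combined with an embedding $A \into [0,1]^\omega$ for (i)$\Rightarrow$(iv). The only cosmetic difference is that the paper phrases the last step as first normalizing $B$ to $2^\omega$ and then taking the graph of the new composition map, whereas you form the pullback set directly — these are the same closed subset of $[0,1]^\omega \times 2^\omega$.
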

	
	\begin{theorem} \labelblock{thm:Polishable_characterization}
		The following conditions are equivalent for a class $\C$ of topological spaces.
		\begin{enumerate}
			\item $\C$ is Polishable. \loclabel{composition}
			\item There is a Polish space $A$ and a closed equivalence relation $E ⊆ A × A$ such that $\set{E^a: a ∈ A} \homeo \C \setminus \set{∅}$. \loclabel{equivalence}
			\item There is a Polish space $A$, an analytic space $B$, and a $G_δ$ set $F ⊆ A × B$ such that $\set{F^b \given b ∈ B} \homeo \C$. \loclabel{rectangular}
			\item There is a $G_δ$ set $F ⊆ [0, 1]^ω × ω^ω$ such that $\set{F^b: b ∈ ω^ω} \homeo \C$, or $\C = ∅$. \loclabel{rigid}
			\item There is a closed set $F ⊆ (0, 1)^ω × ω^ω$ such that $\set{F^b: b ∈ ω^ω} \homeo \C$, or $\C = ∅$. \loclabel{rigid_closed}
		\end{enumerate}
		
		\begin{proof}[Proof of Theorem~\ref{thm:compactifiable_characterization} and \ref{thm:Polishable_characterization}] \hfill
		
			\locimpl{composition}{equivalence}.
				For a composition $\A(q\maps A \to B)$ of $\C$ it is enough to consider the equivalence $E := \set{\tuple{a, a'} ∈ A × A: q(a) = q(a')}$ induced by $q$.
			
			\locimpl{equivalence}{rectangular} is trivial if $∅ ∉ \C$. Otherwise we consider a single-point extension $B ⊇ A$ such that $A$ is clopen in $B$ and use the same $E$. Also see Remark~\ref{thm:emptyset}.
			
			\locimpl{rectangular}{composition}.
				We consider the induced rectangular composition $\A_F(q\maps F \to B)$ (see Construction~\ref{con:rectangular}).
				In the compact case with $B$ compact the proof is finished.
				If $B = ⋃_{n ∈ ω} B_n$ for some compacta $B_n$, then each $F ∩ (A × B_n)$ induces a compact composition of $\set{F^b: b ∈ B_n}$, and $\C$ is equivalent to a countable union of compactifiable classes, which is compactifiable by Observation~\ref{thm:countable_union_of_compositions}.
				In the Polish case, there is a Polish space $B'$ and a continuous surjection $f\maps B' \onto B$.
				Let $\A'$ be the pullback of $\A_F$ along $f$ (Construction~\ref{con:pullback}).
				As in Corollary~\ref{thm:subcomposition}, $\A'$ is a composition of $\set{F^b: b ∈ B} \homeo \C$,
				and it is Polish by Lemma~\ref{thm:analytic_pullback}.
			
			\locimpl{composition}{rigid}, \locref{rigid_closed}.
				Let $\A(q\maps A \to B)$ be a \alt{compact}{Polish} composition of $\C$.
				We may suppose that $B$ is nonempty. Otherwise, $\C$ is empty as well.
				Recall that every nonempty metrizable compactum is a continuous image of the Cantor space $2^ω$ and that every nonempty Polish space is a continuous image of the Baire space $ω^ω$, so we may suppose that $B = {}$\alt{$2^ω$}{$ω^ω$} by Construction~\ref{con:pullback}.
				Recall that every separable metrizable space may be embedded into the Hilbert cube $[0, 1]^ω$, so we may suppose that $A ⊆ [0, 1]^ω$.
				Let $F$ be the graph of $q$.
				By the second part of Construction~\ref{con:rectangular}, $\set{F^b: b ∈ B} \homeo \C$ and $F$ is closed in $A × B$.
				Since $A$ is \alt{compact}{Polish}, $A × B$ and so $F$ is \alt{closed}{$G_δ$} in $[0, 1]^ω$.
				This proves \locref{rigid}.
				The proof of \locref{rigid_closed} is analogous and uses the fact that every Polish space may be embedded into $(0, 1)^ω$ as a closed subspace \cite[4.17]{Kechris}.
			
			The implications \locref{rigid}, \locimpl{rigid_closed}{rectangular} are trivial.
		\end{proof}
	\end{theorem}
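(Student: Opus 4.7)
The plan is to establish Theorems~\ref{thm:compactifiable_characterization} and~\ref{thm:Polishable_characterization} in parallel, because their statements differ only in the regularity class invoked at each step (compact vs.\ Polish, closed vs.\ $G_\delta$, $F_\sigma$ vs.\ analytic). I would close the web of equivalences with a cycle \locref{composition} $\Longrightarrow$ \locref{equivalence} $\Longrightarrow$ \locref{rectangular} $\Longrightarrow$ \locref{composition}, together with a loop \locref{composition} $\Longrightarrow$ \locref{rigid_closed} $\Longrightarrow$ \locref{rigid} $\Longrightarrow$ \locref{rectangular} for the Polishable version (and its obvious compactifiable analogue with the Cantor space in place of the Baire space).

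For \locref{composition} $\Longrightarrow$ \locref{equivalence}, given a composition $q\maps A\to B$ of $\C$ the natural choice is the kernel equivalence $E=\set{(a,a')\in A\times A: q(a)=q(a')}$, which is closed because $q$ is continuous into a Hausdorff space, and whose fibers $E^a=q\fiber{q(a)}$ sweep exactly through the nonempty fibers of $q$. The step \locref{equivalence} $\Longrightarrow$ \locref{rectangular} is immediate when $\emptyset\notin\C$, by taking $B=A$ and $F=E$; otherwise I would enlarge $A$ by an isolated index point whose corresponding fiber is empty, preserving closedness and the compactness/Polishness of both spaces.

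The main structural step is \locref{rectangular} $\Longrightarrow$ \locref{composition}. The natural candidate is the rectangular composition $\A_F$ from Construction~\ref{con:rectangular}, but in the Polishable case the indexing space is only analytic, so $\A_F$ need not itself be Polish. The repair is to take a continuous surjection $f\maps B'\onto B$ from a Polish $B'$ and pull $\A_F$ back along $f$: by Lemma~\ref{thm:analytic_pullback} the pullback is Polish, and by Construction~\ref{con:pullback} its fibers reindex those of $\A_F$ without changing the family of fibers up to equivalence. In the compactifiable case, $B$ is metrizable and $\sigma$-compact; I would decompose $B$ into countably many compacta, obtain compact subcompositions on each piece, and combine them using closure of compactifiability under countable unions (Observation~\ref{thm:countable_union_of_compositions}), which I would invoke as a black box.

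Finally, \locref{composition} $\Longrightarrow$ \locref{rigid_closed} is a sequence of normalisations. First I would replace the indexing space by $\omega^\omega$ via pullback along a continuous surjection from the Baire space (which exists whenever $B$ is nonempty Polish); then embed the composition space as a closed subset of $(0,1)^\omega$ using the standard fact that every Polish space admits such an embedding; and finally replace $q$ by its graph, which is closed in $A\times B$ and hence in $(0,1)^\omega\times\omega^\omega$. The derivation of \locref{rigid} is identical, with the Hilbert cube $[0,1]^\omega$ in place of $(0,1)^\omega$ and the graph becoming $G_\delta$ in the enlarged product, and the implications \locref{rigid}, \locref{rigid_closed} $\Longrightarrow$ \locref{rectangular} are then trivial. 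The only non-routine point I anticipate is the analytic-to-Polish transition in \locref{rectangular} $\Longrightarrow$ \locref{composition}, which rests squarely on Lemma~\ref{thm:analytic_pullback}; everything else is bookkeeping around surjections, graphs, and the possibility that $\emptyset\in\C$.
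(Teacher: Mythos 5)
Your proposal is correct and follows essentially the same route as the paper: kernel equivalence for \locimpl{composition}{equivalence}, a one-point extension of the index space when $∅ ∈ \C$, the rectangular composition plus pullback along a surjection from a Polish space (Lemma~\ref{thm:analytic_pullback}) resp.\ the countable-union observation for \locimpl{rectangular}{composition}, and the normalisation of the indexing space to $ω^ω$ (or $2^ω$) followed by taking the graph of $q$ for \locref{rigid} and \locref{rigid_closed}. The only cosmetic discrepancy is that you announce a chain \locref{rigid_closed} $\Longrightarrow$ \locref{rigid} at the outset but then, like the paper, derive both conditions directly from \locref{composition}; either works.
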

	
	\begin{observation} \label{thm:components}
		A class $\C$ of nonempty metrizable continua is compactifiable if and only if there exists a metrizable compactum $A$ whose set of components is equivalent to $\C$.
		
		\begin{proof}
			Let $\A(q\maps A \to B)$ be a compact composition of $\C$.
			By Theorem~\ref{thm:compactifiable_characterization} the indexing space $B$ may be taken zero-dimensional (e.g.\ the Cantor space), and hence the spaces $q\fiber{b}$ are precisely the components of $A$.
			
			On the other hand, let $A$ be a metrizable compactum whose set of components is equivalent to $\C$. Let $q\maps A \to B$ be the quotient map induced by the decomposition of $A$ into its components. Since $A$ is a metrizable compactum, the components are equal to the quasi-components, and hence $B$ is totally separated (i.e. points can be separated by clopen sets), in particular Hausdorff. Therefore, $B$ is a metrizable compactum and $q$ induces the desired compact composition.
		\end{proof}
	\end{observation}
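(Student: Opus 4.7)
My plan is to prove the two directions by direct construction in each case, using the characterization of compactifiability already obtained in Theorem~\ref{thm:compactifiable_characterization} for the forward direction and a standard quotient argument for the backward direction.

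For the forward direction, I would start with a compact composition $\A(q\maps A \to B)$ of $\C$ and invoke item (iv) of Theorem~\ref{thm:compactifiable_characterization} to replace it, up to equivalence, by a closed set $F \subseteq [0,1]^ω × 2^ω$ whose sections $\set{F^b : b \in 2^ω}$ form a family equivalent to $\C$. The composition space $F$ is a metrizable compactum. Each section $F^b × \set{b}$ is homeomorphic to $F^b$, hence a nonempty continuum by hypothesis on $\C$. Because $2^ω$ is totally disconnected, the projection $π_{2^ω}$ restricted to any connected subset of $F$ must be constant, so every component of $F$ lies inside a single section $F^b × \set{b}$; since each such section is itself connected, it is a component. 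Thus the set of components of $F$ is exactly $\set{F^b × \set{b} : b \in 2^ω}$, which is equivalent to $\C$.

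For the backward direction, suppose $A$ is a metrizable compactum whose family of components is equivalent to $\C$. Let $B$ denote the quotient of $A$ by the decomposition into components, and let $q\maps A \to B$ be the quotient map. Since $A$ is compact, so is $B$. The key topological fact is that in a compact Hausdorff space components coincide with quasi-components, so distinct components can be separated by clopen subsets of $A$. This makes the decomposition upper semicontinuous, so $B$ is Hausdorff (equivalently, $B$ is totally separated); a compact Hausdorff continuous image of a second countable space is metrizable, so $B$ is a metrizable compactum. The map $q$ is continuous by definition, so $\A(q\maps A \to B)$ is a compact composition of the family of components, which is equivalent to $\C$.

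I do not anticipate a real obstacle here: once Theorem~\ref{thm:compactifiable_characterization} is available, both directions are short. The only points requiring care are (a) the observation that zero-dimensionality of the indexing space $2^ω$ forces the fibers $F^b × \set{b}$ to be the actual components of $F$ (so that we indeed obtain an equivalence with $\C$ on the nose, not merely a refinement), and (b) verifying Hausdorffness of the component quotient in the backward direction via the coincidence of components and quasi-components in compact Hausdorff spaces.
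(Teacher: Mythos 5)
Your proposal is correct and follows essentially the same route as the paper: the forward direction replaces the indexing space by the Cantor set via Theorem~\ref{thm:compactifiable_characterization} and observes that zero-dimensionality of the index forces the (connected, nonempty) fibers to be exactly the components, and the backward direction uses the quotient by components together with the coincidence of components and quasi-components in compact metric spaces to get a metrizable compact indexing space. The extra details you supply (constancy of the projection on connected sets, metrizability of the Hausdorff quotient) are exactly the steps the paper leaves implicit.
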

	
	Let us conclude this section with basic observations about (non)existence of compactifiable or Polishable classes.
	
	\begin{remark} \label{thm:emptyset}
		If a class $\C$ is \alt{compactifiable}{Polishable}, then so are the classes $\C \setminus \set{∅}$ and $\C ∪ \set{∅}$. This is because if a map $q\maps A \to B$ induces a compact composition, then the maps $q\maps A \to q\im{A}$ and $q\maps A \to B \topsum \set{∞}$ induces compact compositions as well.
		For Polishable $\C$ the case “$\C ∪ \set{∅}$” is the same, but the case “$\C \setminus \set{∅}$” needs a comment. The map $q\maps A \to q\im{A}$ may not directly induce a Polish composition since $q\im{A}$ may not be $G_δ$ in $B$. Nevertheless, it is analytic, so we use Corollary~\ref{thm:subcomposition}.
		In fact, this gives us the composition $\A_E$ for $E = \set{\tuple{a, a'} ∈ A × A: q(a) = q(a')}$.
	\end{remark}
	
	\begin{observation} \label{thm:countable_union_of_compositions}
		Every countable union of \alt{compactifiable}{Polishable} classes is \alt{compactifiable}{Polishable}.
		
		\begin{proof}
			Let $I$ be a set and for every $i ∈ I$ let $\A_i(q_i\maps A_i \to B_i)$ be a composition of a class $\C_i$.
			We consider the \emph{sum composition} $\A(q\maps A \to B) := \TopSum_{i ∈ I} \A_i$, i.e. $A := \TopSum_{i ∈ I} A_i$, $B := \TopSum_{i ∈ I} B_i$, and $q := \TopSum_{i ∈ I} q_i\maps A \to B$.
			Clearly, $\A$ is a composition of $⋃_{i ∈ I} \C$.
			If $I$ is \alt{finite}{countable} and the compositions $\A_i$ are \alt{compact}{Polish}, then $\A$ is also \alt{compact}{Polish}.
			
			It remains to consider a countable sum of compact compositions that is not compact.
			Without loss of generality, $∅ ∉ \C_i ≠ ∅$ for every $i ∈ I$ (Remark~\ref{thm:emptyset}), and so $A$ and $B$ are separable metrizable locally compact non-compact spaces.
			We consider their one-point compactifications $A^+$ and $B^+$, which are metrizable, and the corresponding extension $q^+\maps A^+ \to B^+$ of the map $q$.
			The map $q^+$ is continuous since $q$ is perfect (i.e. closed with compact fibers), and it induces a composition of $⋃_{i ∈ I} \C_i ∪ \set{\set{∞}}$, so if the given classes contain a one-point space, we are done.
			Otherwise, we take any space $C ∈ ⋃_{i ∈ I} \C_i$, attach it to the point $∞ ∈ A^+$, and modify the definition of $q^+$ accordingly.
		\end{proof}
	\end{observation}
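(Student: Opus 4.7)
The plan is to construct a \emph{sum composition}. Given compositions $\A_i(q_i \maps A_i \to B_i)$ of classes $\C_i$ for $i$ in a countable index set $I$, I would set $A := \TopSum_{i \in I} A_i$ and $B := \TopSum_{i \in I} B_i$ (topological sums) and let $q\maps A \to B$ be the map whose restriction to each clopen summand $A_i$ coincides with $q_i$. The fibers of $q$ are precisely the fibers of the individual $q_i$, so $\A(q\maps A \to B)$ is a composition of $\bigcup_{i \in I} \C_i$.

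For the Polishable case this essentially concludes matters: countable topological sums of Polish spaces are Polish, so $A$ and $B$ are Polish and $q$ is continuous (continuity can be checked on each clopen summand), yielding a Polish composition. For a finite sum of compactifiable classes the same argument works, since finite sums of metrizable compacta are again metrizable compacta.

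The genuine obstacle is a countably infinite sum of compact compositions, where $A$ and $B$ are typically only locally compact and not compact. My plan is first to use Remark~\ref{thm:emptyset} to discard $\emptyset$ from each $\C_i$ and throw away any trivial summands, so that each $q_i$ is a surjection between nonempty metrizable compacta; this makes $A$ and $B$ separable metrizable and locally compact. Then I would pass to the one-point compactifications $A^+$ and $B^+$, which are metrizable compacta, and extend $q$ to $q^+\maps A^+ \to B^+$ by sending the added point of $A^+$ to the added point of $B^+$. The key technical verification is continuity of $q^+$ at the added point: since $q$ arises as a disjoint union of continuous maps between compacta, it is a perfect map, so preimages under $q^+$ of neighborhoods of $\infty$ in $B^+$ (complements of compact subsets of $B$) are neighborhoods of $\infty$ in $A^+$.

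This construction introduces a new single-point fiber $\set{\infty}$ over the added point of $B^+$, so $q^+$ witnesses compactifiability of $\bigcup_i \C_i \cup \set{\set{*}}$ rather than of $\bigcup_i \C_i$ itself. If some $\C_i$ already contains a one-point space the two classes are equivalent and we are done. Otherwise, I would modify the construction by attaching, at the added point of $A^+$, a copy of some fixed $C \in \bigcup_i \C_i$ and redefining $q^+$ on this copy so that it maps entirely onto the added point of $B^+$; then the new fiber is a copy of $C$, which is already present in the class, and metrizability and compactness are preserved.
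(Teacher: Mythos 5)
Your proposal is correct and follows essentially the same route as the paper: the sum composition, reduction to nonempty surjective summands via Remark~\ref{thm:emptyset}, one-point compactification with continuity of $q^+$ via perfectness of $q$, and the final adjustment of the fiber over the added point by either noting a singleton already lies in the class or attaching a copy of some $C$ from the union. No gaps to report.
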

	
	\begin{corollary}
		Every countable family of metrizable compacta is compactifiable. Every countable family of Polish spaces is Polishable.
	\end{corollary}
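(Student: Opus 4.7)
The plan is to reduce the statement to the previous observation by handling the singleton case first. Given a single metrizable compactum $X$, the constant map $q\maps X \to \set{*}$ onto a one-point space is trivially continuous between metrizable compacta, so the singleton class $\set{X}$ admits the compact composition $\A(q\maps X \to \set{*})$ and is therefore compactifiable. Exactly the same argument with $X$ a Polish space (and $\set{*}$ a Polish space) shows that every singleton class of Polish spaces is Polishable.

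Next, I would write a countable family of metrizable compacta as $\C = \set{X_n : n ∈ ω}$ and note that $\C = ⋃_{n ∈ ω} \set{X_n}$ is a countable union of compactifiable classes. By Observation~\ref{thm:countable_union_of_compositions}, which states that countable unions of compactifiable classes are compactifiable, $\C$ is compactifiable. The Polish case is identical, using the ``Polishable'' half of the same observation.

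The main obstacle is essentially nonexistent: the entire work has been done in Observation~\ref{thm:countable_union_of_compositions}, in particular in its nontrivial last paragraph where countable (but infinite) sums of compact compositions are handled via a one-point compactification. What remains is only to supply the trivial base case and invoke that observation, so the corollary is immediate.
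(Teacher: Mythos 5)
Your proof is correct and is exactly the argument the paper intends: the corollary is stated as an immediate consequence of Observation~\ref{thm:countable_union_of_compositions}, with the singleton base case given by the constant map to a point. Nothing further is needed.
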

	
	\begin{remark}
		We require metrizability (or equivalently existence of a countable base) in the definition of compact composition not only to obtain a notion stronger than Polish composition, but because otherwise the corresponding compactifiability would be trivial.
		Using the one-point compactification as in the previous proof, we may easily construct a composition with compact composition space and compact indexing space for any family of compacta.
	\end{remark}
	
	\begin{observation}
		By Theorem~\ref{thm:Polishable_characterization} there are at most $\continuum$-many nonequivalent Polishable classes since there are only $\continuum$-many $G_δ$ subsets of $[0, 1]^ω × ω^ω$.
		On the other hand, there are $\continuum$-many non-homeomorphic metrizable compact spaces – even in the real line. Hence, there are exactly $2^\continuum$-many nonequivalent classes of metrizable compacta and also exactly $2^\continuum$-many nonequivalent classes of Polish spaces. This cardinal argument gives us that many classes of metrizable compacta are not Polishable.
	\end{observation}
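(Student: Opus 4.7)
The plan is to verify the three cardinality claims in turn — an upper bound of $\continuum$ on Polishable classes, a lower bound of $\continuum$ on non-homeomorphic metrizable compacta in $\RR$, and a comparison argument deducing the conclusion.

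First, I would apply Theorem~\ref{thm:Polishable_characterization}: every nonempty Polishable class is, up to equivalence, represented by some $G_\delta$ subset of $[0,1]^\omega \times \omega^\omega$. Since $[0,1]^\omega \times \omega^\omega$ is Polish and hence second countable, it has only $\continuum$-many open subsets and thus $\continuum$-many $G_\delta$ subsets; together with the single empty class this gives at most $\continuum$-many nonequivalent Polishable classes.

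Second, I would exhibit $\continuum$-many pairwise non-homeomorphic compact subsets of $\RR$. A convenient construction: for each $f \in \omega^\omega$, place inside each interval $[1/(n+1), 1/n]$ an order-homeomorphic copy of the countable ordinal space $\omega^{f(n)} + 1$ and adjoin the accumulation point $0$, forming $K_f \subseteq [0,1]$. Cantor--Bendixson analysis recovers the multiset $\set{f(n) : n \in \omega}$ as a homeomorphism invariant of $K_f$, and there are $\continuum$-many such multisets, so $\set{K_f}_{f \in \omega^\omega}$ realizes $\continuum$-many homeomorphism types of compacta inside $\RR$. The main (and only nontrivial) step is this construction; once in place the rest reduces to cardinal arithmetic.

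Third, I would boost this to the class level. Given $\continuum$-many pairwise non-homeomorphic metrizable compacta $\set{X_\alpha : \alpha < \continuum}$, the subfamilies $\set{\set{X_\alpha : \alpha \in S} : S \subseteq \continuum}$ form $2^\continuum$-many pairwise nonequivalent classes of metrizable compacta, hence also of Polish spaces. For the matching upper bound, every metrizable compactum embeds as a closed subset of $[0,1]^\omega$ and every Polish space as a $G_\delta$ subset, of which there are $\continuum$-many, so there are only $\continuum$-many homeomorphism types in each case, hence at most $2^\continuum$-many nonequivalent classes. Combining: exactly $2^\continuum$ nonequivalent classes of metrizable compacta (and of Polish spaces) exist, but at most $\continuum$ of them are Polishable; since $\continuum < 2^\continuum$, most classes of metrizable compacta fail to be Polishable.
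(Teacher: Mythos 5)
Your overall architecture --- the upper bound of $\continuum$ on Polishable classes via Theorem~\ref{thm:Polishable_characterization} together with the count of $G_\delta$ subsets of a second countable space, and then $2^{\continuum}$-many pairwise nonequivalent classes obtained as subfamilies of a $\continuum$-sized pairwise non-homeomorphic family --- is exactly the paper's argument, and your first and third steps are correct. The gap is in the step you yourself single out as the only nontrivial one: your construction of $\continuum$-many pairwise non-homeomorphic compacta in $\mathbb{R}$ does not work. Each $K_f = \set{0} \cup \bigcup_n (\text{copy of } \omega^{f(n)}+1)$ is a countable compact metric space, hence by the Mazurkiewicz--Sierpi\'nski theorem (which the paper itself cites later) homeomorphic to $\omega^{\alpha}\cdot m + 1$ for a countable ordinal $\alpha$ and finite $m$; since every summand has finite Cantor--Bendixson rank one checks $\alpha \le \omega$, so the family $\set{K_f : f \in \omega^{\omega}}$ realizes only countably many homeomorphism types. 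Concretely, the multiset $\set{f(n) : n \in \omega}$ is \emph{not} a homeomorphism invariant: for $f = (1,2,1,2,\dots)$ and $g = (2,2,2,\dots)$ the multisets differ, yet three Cantor--Bendixson derivatives reduce either space to $\set{0}$ and both are homeomorphic to $\omega^{3}+1$. More structurally, no construction using only countable compacta can succeed in ZFC, since countable compacta admit only $\aleph_1$ homeomorphism types and $\aleph_1 < \continuum$ is consistent.

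A repair must use compacta with nondegenerate components. For instance, place in the $n$-th slot a closed interval $J_n$ together with a copy of $\omega^{f(n)}+1$ whose top point is the left endpoint of $J_n$ and whose remaining points lie to the left of $J_n$, and adjoin $0$. The nondegenerate components of the resulting compactum are exactly the $J_n$, and in the closure of the complement of their union (a countable compactum) the distinguished endpoint of $J_n$ has Cantor--Bendixson rank exactly $f(n)$; both of these are preserved by homeomorphisms, so the multiset $\set{f(n) : n \in \omega}$ --- of which there are $\continuum$-many --- becomes a genuine invariant and yields the required $\continuum$-many homeomorphism types inside $[0,1]$. With this step replaced, the rest of your argument goes through as written.
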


\section{Compactifiability and hyperspaces} \label{sec:hyperspaces}
	
	A class of topological spaces is often equivalent to a family of subspaces of some fixed ambient space. Therefore, it is natural to consider how compactifiability of such family is related to its properties when viewed as a subset of a hyperspace.
	
	For a topological space $X$ we shall consider the hyperspaces of all subsets $\powset{X}$, of all closed subsets $\Closed{X}$, of all compact subsets $\Compacta{X}$, and of all subcontinua $\Continua{X}$ endowed with the \emph{Vietoris topology}.
	We include the empty set in the families.
	Recall that the \emph{lower Vietoris topology} $τ_V^-$ is generated by the sets $U^- = \set{A: A ∩ U ≠ ∅}$ for $U ⊆ X$ open, and the \emph{upper Vietoris topology} $τ_V^+$ is generated by the sets $U^+ = \set{A: A ⊆ U}$ for $U ⊆ X$ open. The Vietoris topology $τ_V$ is their join.
	
	Also recall that if $X$ is metrizable by a metric $d$, the corresponding \emph{Hausdorff metric} $d_H$ on $\Closed{X}$ is defined by $d_H(A, B) = \max(δ(A, B), δ(B, A))$ where $δ(A, B) = \sup_{x ∈ A} d(x, B) = \sup_{x ∈ A} \inf_{y ∈ B} d(x, y) = \inf\set{ε: A ⊆ N_ε(B)}$.
	We have $δ(∅, B) = 0$ for every $B$, and $δ(A, ∅) = ∞$ for every $A ≠ ∅$, and also $δ(A, B) = ∞$ for every $A$ unbounded and $B$ bounded. Hence, strictly speaking, $d_H$ is an extended metric, but we may always cap it at $1$ or suppose that $d ≤ 1$ and interpret the infima in $[0, 1]$, so $\inf ∅ = 1$. In any case, the singleton $\set{∅}$ is clopen in $\Closed{X}$ with both Vietoris topology and Hausdorff metric topology.
	
	The Vietoris topology and the topology induced by the Hausdorff metric are not comparable on $\Closed{X}$ in general, but they coincide on $\Compacta{X}$. If $X$ is compact or Polish, so is $\Compacta{X}$. Also, $\Continua{X}$ is a closed subspace of $\Compacta{X}$ if $X$ is Hausdorff. For reference on the mentioned properties see \cite[4.F]{Kechris}.
	
	\begin{construction}[from hyperspace to composition] \label{con:from_hyperspace}
		Let $X$ be a topological space and let $\F ⊆ \powset{X}$. We consider the set $A_\F := \set{\tuple{x, F}: x ∈ F ∈ \F} ⊆ X × \F$. Let us denote the corresponding composition (Construction~\ref{con:rectangular}) by $\A_\F$. Since $(A_\F)^F = F$ for every $F ∈ \F$, we have that $\A_\F$ is a composition of the family $\F$ with composition space $A_\F$ and indexing space $\F$.
		The composition map is just the projection $π_\F\restr{A_\F}$.
		Also, $A_\F = \R_∈ ∩ (X × \F)$ where $\R_∈ := \set{\tuple{x, F} ∈ X × \powset{X}: x ∈ F}$ is the membership relation.
	\end{construction}
	
	\begin{observation} \label{thm:membership_closed}
		If $X$ is a regular space, then the membership relation of closed sets is closed, i.e.\ $\R_∈ ∩ (X × \Closed{X})$ is closed in $X × \Closed{X}$ (even with respect to $τ_V^+$).
		
		\begin{proof}
			If $F ∈ \Closed{X}$ and $x ∈ X \setminus F$, then there are disjoint open sets $U, V ⊆ X$ such that $x ∈ U$ and $F ⊆ V$. We have that $U × V^+$ is a neighborhood of $\tuple{x, F}$ disjoint with $\R_∈$.
		\end{proof}
	\end{observation}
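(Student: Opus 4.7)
My plan is to show that the complement of $\R_∈ ∩ (X × \Closed{X})$ is open in $X × \Closed{X}$, using only basic open sets of the form $U \times V^+$ so that the conclusion holds even with respect to the coarser product topology $\mathrm{pr} \times τ_V^+$.

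First, I would fix a point $\tuple{x, F}$ in the complement, meaning $F \in \Closed{X}$ and $x \in X \setminus F$. The task is to produce a basic open neighborhood of $\tuple{x, F}$ in $X \times \Closed{X}$ (with $\Closed{X}$ carrying only $τ_V^+$) that avoids $\R_∈$. This is exactly the setting where regularity of $X$ pays off: since $F$ is closed and does not contain $x$, regularity gives disjoint open sets $U, V \subseteq X$ with $x \in U$ and $F \subseteq V$.

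Next I would take the candidate neighborhood $W := U \times V^+$, which is open in the product topology on $X \times \Closed{X}$ (where $V^+ = \set{G \in \Closed{X} : G \subseteq V}$ is a subbasic open set of $τ_V^+$), and note that $\tuple{x, F} \in W$ because $x \in U$ and $F \subseteq V$. Finally, for any $\tuple{y, G} \in W$ I would observe that $y \in U$ and $G \subseteq V$, so by disjointness of $U$ and $V$ we have $y \notin V$, hence $y \notin G$. Thus $W$ is disjoint from $\R_∈$, which is what we wanted.

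I do not anticipate any real obstacle here; the result is essentially the standard fact that the upper Vietoris topology makes the membership relation closed in regular spaces, and regularity is used exactly once to separate a point from a closed set. The only thing to be slightly careful about is that the statement is about $\R_∈ ∩ (X × \Closed{X})$ in $X × \Closed{X}$ rather than in $X × \powset{X}$, but this restriction is automatic since the separating argument requires $F$ to be closed and the bookkeeping with $V^+$ takes place in $\Closed{X}$.
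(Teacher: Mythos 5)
Your argument is correct and is essentially identical to the paper's proof: both use regularity to separate $x$ from $F$ by disjoint open sets $U, V$ and then observe that $U × V^+$ is a neighborhood of $\tuple{x, F}$ missing $\R_∈$. You merely spell out the final disjointness check in more detail.
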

	
	\begin{proposition} \label{thm:family}
		\hfill
		\begin{enumerate}
			\item If $X$ is a metrizable compactum and $\F$ is an $F_σ$ subset of \alt{$\Compacta{X}$}{$\Continua{X}$}, then $\F$ is a compactifiable class of \alt{compacta}{continua}.
			\item If $X$ is a Polish space and $\F$ is an analytic subset of \alt{$\Compacta{X}$}{$\Continua{X}$}, then $\F$ is a Polishable class of \alt{compacta}{continua}.
		\end{enumerate}
		
		\begin{proof}
			It is enough to use the set $A_\F ⊆ X × \F$ from Construction~\ref{con:from_hyperspace} and Theorem~\ref{thm:compactifiable_characterization} and \ref{thm:Polishable_characterization}.
		\end{proof}
	\end{proposition}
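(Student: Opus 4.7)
The plan is to realize $\F$ as the indexing space of a single rectangular composition via Construction~\ref{con:from_hyperspace} and then invoke condition (iii) of Theorem~\ref{thm:compactifiable_characterization} for part (i) and of Theorem~\ref{thm:Polishable_characterization} for part (ii). Concretely, one works with $A_\F = \set{\tuple{x, F} \given x ∈ F ∈ \F} ⊆ X × \F$; by Construction~\ref{con:from_hyperspace}, the induced rectangular composition has fibers $(A_\F)^F = F$ for each $F ∈ \F$, so it is already a composition of the family $\F$ with $\F$ itself as the indexing space.

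The content of the argument is then a single complexity calculation: $A_\F$ is closed in $X × \F$. This follows from Observation~\ref{thm:membership_closed}, since $X$ is regular (metrizable in case (i), Polish in case (ii)) and $\F ⊆ \Compacta{X} ⊆ \Closed{X}$ (as $X$ is Hausdorff); thus $\R_∈ ∩ (X × \Closed{X})$ is closed in $X × \Closed{X}$, and $A_\F$ is its trace on $X × \F$.

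For (i), take $A := X$ and $B := \F$: $A$ is a metrizable compactum, $B$ is $F_σ$ in the metrizable compactum $\Compacta{X}$ and hence metrizable $σ$-compact, and $A_\F$ is closed in $A × B$; this verifies condition (iii) of Theorem~\ref{thm:compactifiable_characterization}. For (ii), the same choice works with $A$ Polish, $B$ analytic (as an analytic subspace of the Polish space $\Compacta{X}$), and $A_\F$ closed---hence $G_δ$---in the metrizable space $A × B$; this matches condition (iii) of Theorem~\ref{thm:Polishable_characterization}. The continua variants are immediate since $\Continua{X}$ is closed in $\Compacta{X}$, so an $F_σ$ (resp.\ analytic) subset of $\Continua{X}$ is also $F_σ$ (resp.\ analytic) in $\Compacta{X}$, and the elements of $\F$ are continua by assumption. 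Beyond the closedness of the membership relation, there is no real obstacle---everything else is a direct assembly of Construction~\ref{con:from_hyperspace} and the rectangular characterizations already established.
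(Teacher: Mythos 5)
Your proposal is correct and follows exactly the route the paper intends: Construction~\ref{con:from_hyperspace} realizes $\F$ as the indexing space of the rectangular composition with composition space $A_\F = \R_∈ ∩ (X × \F)$, closedness of $A_\F$ comes from Observation~\ref{thm:membership_closed}, and condition (iii) of Theorem~\ref{thm:compactifiable_characterization} (resp.\ Theorem~\ref{thm:Polishable_characterization}) finishes the argument. The paper's own proof is a one-line pointer to the same ingredients; you have simply filled in the details, including the correct observations that an $F_σ$ subset of the compact metrizable $\Compacta{X}$ is metrizable $σ$-compact and that $\Continua{X}$ is closed in $\Compacta{X}$.
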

	
	Next, we shall introduce a construction in the opposite direction, i.e.\ turning a composition into a subset of a hyperspace. But first, let us recall some further properties of hyperspaces and their induced maps.
	
	\begin{observation} \label{thm:hyperspace}
		If a space $X$ is identified with the family of its singletons $\subsets{1}{X}$, then it becomes a subspace of $\powset{X}$ with respect to all $τ_V^-$, $τ_V^+$, and $τ_V$ since for every open $U ⊆ X$ we have $U^- ∩ \subsets{1}{X} = U^+ ∩ \subsets{1}{X} = \subsets{1}{U}$.
	\end{observation}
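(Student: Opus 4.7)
The plan is essentially to verify the displayed identity and then observe that the three topologies in question are each generated by subbases of the form indicated, so that intersecting with $\subsets{1}{X}$ produces the same collection in all three cases.

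First I would unpack the identification: the map $ι\maps X → \subsets{1}{X}$ sending $x$ to $\set{x}$ is a bijection, so there is a unique topology on $\subsets{1}{X}$ making $ι$ a homeomorphism, and the task reduces to checking that the three subspace topologies inherited from $\tuple{\powset{X}, τ_V^-}$, $\tuple{\powset{X}, τ_V^+}$, and $\tuple{\powset{X}, τ_V}$ all coincide with this topology.

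Next I would verify the pointwise identity stated in the observation. For an open set $U ⊆ X$ and a singleton $\set{x} ∈ \subsets{1}{X}$, the conditions $\set{x} ∩ U ≠ ∅$ and $\set{x} ⊆ U$ are both equivalent to $x ∈ U$, so
\[
	U^- ∩ \subsets{1}{X} = U^+ ∩ \subsets{1}{X} = \set{\set{x} \given x ∈ U} = \subsets{1}{U},
\]
and under $ι$ the last set corresponds precisely to $U$.

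Finally I would recall that $\set{U^- \given U ⊆ X \text{ open}}$ is a subbase for $τ_V^-$, that $\set{U^+ \given U ⊆ X \text{ open}}$ is a subbase for $τ_V^+$, and that their union is a subbase for $τ_V$. Intersecting with $\subsets{1}{X}$ commutes with taking the topology generated by a subbase, so in all three cases the resulting subspace topology has the collection $\set{\subsets{1}{U} \given U ⊆ X \text{ open}}$ as a subbase. Since this collection is already closed under finite intersections (as $\subsets{1}{U} ∩ \subsets{1}{V} = \subsets{1}{U ∩ V}$), it is a base, and via $ι$ it matches the original topology of $X$. There is no real obstacle here; the only thing to take care of is not to conflate the subbase with a base prematurely, and to note that the argument works uniformly for the three topologies precisely because both $U^-$ and $U^+$ trace out the same set on singletons.
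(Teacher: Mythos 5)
Your proposal is correct and follows exactly the same route as the paper, which justifies the observation purely by the identity $U^- ∩ \subsets{1}{X} = U^+ ∩ \subsets{1}{X} = \subsets{1}{U}$ together with the fact that the sets $U^-$ (resp.\ $U^+$, resp.\ both) form a subbase for $τ_V^-$ (resp.\ $τ_V^+$, resp.\ $τ_V$). You merely spell out the subbase-to-base step in more detail than the paper does; there is nothing to add or correct.
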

	
	\begin{notation}
		Let $f\maps X \to Y$ be a map between sets. We shall use the notation for induced maps from \cite[5.9]{Michael}:
		\begin{itemize}
			\item $\ImageMap{f}\maps \powset{X} \to \powset{Y}$ is the \emph{image map} defined by $\ImageMap{f}(A) = f\im{A}$,
			\item $\FiberMap{f}\maps Y \to \powset{X}$ is the \emph{fiber map} defined by $\FiberMap{f}(y) = f\fiber{y}$,
			\item $\PreimageMap{f}\maps \powset{Y} \to \powset{X}$ is the \emph{preimage map} defined by $\PreimageMap{f}(B) = f\preim{B}$.
		\end{itemize}
	\end{notation}
	
	The following proposition summarizes properties of the induced maps defined above.
	Some of the equivalences were proved by Michael~\cite[5.10]{Michael}.
	Note that our map $f$ does not have to be onto, we include the empty set in the hyperspace, and we also formulate the equivalences separately for $τ_V^-$ and $τ_V^+$.
	
	\begin{proposition} \label{thm:induced_maps}
		Let $f\maps X \to Y$ be a map between topological spaces.
		\begin{enumerate}
			\item $f$ is continuous $\iff$ $\ImageMap{f}$ is $τ_V^-$-cont.\ $\iff$ $\ImageMap{f}$ is $τ_V^+$-cont.\ $\iff$ $\ImageMap{f}$ is $τ_V$-cont.
			\item $f$ is an embedding $\iff$ $\ImageMap{f}$ is $τ_V^-$-emb.\ $\iff$ $\ImageMap{f}$ is $τ_V^+$-emb.\ $\iff$ $\ImageMap{f}$ is $τ_V$-emb.
			\item $f$ is an open embedding $\iff$ $\ImageMap{f}$ is $τ_V^+$-open emb.\ $\iff$ $\ImageMap{f}$ is $τ_V$-open emb.
			\item $f$ is a closed embedding $\iff$ $\ImageMap{f}$ is $τ_V^-$-closed emb.\ $\iff$ $\ImageMap{f}$ is $τ_V$-closed emb.
			\item $f$ is open $\iff$ $\FiberMap{f}$ is $τ_V^-$-continuous $\iff$ $\PreimageMap{f}$ is $τ_V^-$-continuous.
			\item $f$ is closed $\iff$ $\FiberMap{f}$ is $τ_V^+$-continuous $\iff$ $\PreimageMap{f}$ is $τ_V^+$-continuous.
			\item $f$ is closed and open $\iff$ $\FiberMap{f}$ is $τ_V$-continuous $\iff$ $\PreimageMap{f}$ is $τ_V$-continuous.
			\item $f$ is continuous $\implies$ $\FiberMap{f}$ is $τ_V$-(closed and open) onto its image.
		\end{enumerate}
		
		\begin{proof}[Proof sketch]
			We use the following equalities.
			\begin{gather*}
				\begin{aligned}
					(\ImageMap{f})\preim{B^-} &= f\preim{B}^- & (\ImageMap{f})\preim{B^+} &= f\preim{B}^+ \\
					(\ImageMap{f})\im{A^-} &= f\im{A}^- ∩ \rng(\ImageMap{f}) & (\ImageMap{f})\im{A^+} &= f\im{A}^+ ⊆ \rng(\ImageMap{f})
						\quad \text{for $f$ injective} \\
					(\FiberMap{f})\preim{A^-} &= f\im{A} & (\FiberMap{f})\preim{A^+} &= f^∀\im{A} := Y \setminus f\im{X \setminus A} \\
					(\PreimageMap{f})\preim{A^-} &= f\im{A}^- & (\PreimageMap{f})\preim{A^+} &= f^∀\im{A}^+ \\
				\end{aligned} \\
				(\FiberMap{f})\im{B} = \begin{cases}
					f\preim{B}^- ∩ \rng(\FiberMap{f}) \quad \text{if $B ⊆ \rng(f)$} \\
					f\preim{B}^+ ∩ \rng(\FiberMap{f}) \quad \text{if $B ⊈ \rng(f)$ or $f$ is onto } \\
				\end{cases}
			\end{gather*}
			Regarding the embeddings, if $f$ is an embedding and $U ⊆ X$ is open, then $f\im{U} = V ∩ \rng(f)$ for some open $V ⊆ Y$. Therefore, we have 
			\begin{align*}
				\ImageMap{f}\im{U^-} &= f\im{U}^- ∩ \rng(f) = (V ∩ \rng(f))^- ∩ \rng(\ImageMap{f}) = V^- ∩ \rng(\ImageMap{f}), \\
				\ImageMap{f}\im{U^+} &= f\im{U}^+ = (V ∩ \rng(f))^+ = V^+ ∩ \rng(\ImageMap{f}),
			\end{align*}
			and so $\ImageMap{f}$ is a $τ_V^-$-, $τ_V^+$- and hence a $τ_V$-embedding. 
			Regarding the closedness and openness, observe that $\rng(\ImageMap{f}) = \rng(f)^+$, so if $\rng(f)$ is open, then $\rng(\ImageMap{f})$ is $τ_V^+$-open, and if $\rng(f)$ is closed, then $\rng(\ImageMap{f})$ is $τ_V^-$-closed.
			For the backward implications we may use Observation~\ref{thm:hyperspace} since $f$ may be viewed as a restriction $\subsets{1}{X} \to \subsets{1}{Y}$ of $\ImageMap{f}$, and $\rng(f)$ is essentially $\rng(\ImageMap{f}) ∩ \subsets{1}{Y}$.
		\end{proof}
	\end{proposition}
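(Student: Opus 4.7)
The plan is to reduce every item to the behavior of the induced maps on a Vietoris subbase, via direct unraveling of definitions. Recall that the Vietoris topology on $\powset{Z}$ is generated by the sets $U^-$ and $U^+$ for $U \subseteq Z$ open, while $\tau_V^-$ (resp.\ $\tau_V^+$) is generated by only the $U^-$'s (resp.\ the $U^+$'s). So continuity is tested on preimages, and embeddedness on images, of subbasic sets.

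The first step is to derive the preimage identities. For $\ImageMap{f}$, the chain $\ImageMap{f}(A) \in B^- \iff f\im{A} \cap B \neq \emptyset \iff A \cap f\preim{B} \neq \emptyset$ yields $(\ImageMap{f})\preim{B^-} = f\preim{B}^-$; the $+$ version is symmetric. For $\FiberMap{f}$, the preimage of $A^-$ is $f\im{A}$, while the preimage of $A^+$ is the ``universal image'' $f^\forall\im{A} := Y \setminus f\im{X \setminus A}$, since $f\fiber{y} \subseteq A \iff y \notin f\im{X \setminus A}$. The $\PreimageMap{f}$ formulas follow either by observing $\PreimageMap{f}$ acts as ``$\ImageMap{}$ of $\FiberMap{f}$'' on singletons, or by the same style of direct calculation.

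With these identities, items (i) and (v)-(vii) drop out. For (i), $\ImageMap{f}$ is $\tau_V^-$-continuous iff $f\preim{B}^-$ is $\tau_V^-$-open for every open $B \subseteq Y$, iff $f\preim{B}$ is open, iff $f$ is continuous; the $\tau_V^+$ case is verbatim, and $\tau_V$ follows by joining. For (v), $\tau_V^-$-continuity of $\FiberMap{f}$ amounts exactly to openness of $f\im{A}$ for open $A$; for $\PreimageMap{f}$ the same is true, and for the converse I would restrict $\PreimageMap{f}$ to singletons via Observation~\ref{thm:hyperspace}. Items (vi) and (vii) follow analogously using the universal image, giving the neat ``minus $\leftrightarrow$ open, plus $\leftrightarrow$ closed'' duality, and the conjunction of both.

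For the embedding items (ii)-(iv) I would additionally need image formulas: $\ImageMap{f}\im{U^-} = f\im{U}^- \cap \rng(\ImageMap{f})$, and when $f$ is injective $\ImageMap{f}\im{U^+} = f\im{U}^+ \cap \rng(\ImageMap{f})$. Combined with the preimage identities, these show that if $f$ is an embedding, $\ImageMap{f}$ is an embedding in each of the three Vietoris topologies. To distinguish open/closed embeddings in (iii) and (iv), I would use $\rng(\ImageMap{f}) = \rng(f)^+$, which is $\tau_V^+$-open when $\rng(f)$ is open and $\tau_V^-$-closed when $\rng(f)$ is closed. Item (viii) follows from a case analysis for $\FiberMap{f}\im{B}$ (split on whether $B \subseteq \rng(f)$) combined with (i) for continuity and the just-established openness and closedness of the fiber map onto its image. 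The main obstacle is purely bookkeeping: tracking which direction of each equivalence needs injectivity, which needs singletons as test objects, and ensuring the universal-image formula is applied consistently rather than slipping back to $f\im{A}$.
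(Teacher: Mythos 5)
Your proposal is correct and follows essentially the same route as the paper's proof sketch: both reduce everything to the subbasic image/preimage identities for $\ImageMap{f}$, $\FiberMap{f}$, and $\PreimageMap{f}$, use $\rng(\ImageMap{f}) = \rng(f)^+$ to handle open/closed embeddings, and recover the backward implications by restricting to singletons via Observation~\ref{thm:hyperspace}. No substantive difference to report.
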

	
	\begin{definition}
		A composition $\A(q\maps A \to B)$ is called a \emph{strong composition} if the composition map $q$ is closed and open and $\card{B \setminus \rng(q)} ≤ 1$.
		A class $\C$ of topological spaces is called \alt{\emph{strongly compactifiable}}{\emph{strongly Polishable}} if there is a \alt{strong compact}{strong Polish} composition of $\C$.
		
		The strongness of a composition means that the corresponding decomposition of $A$ is continuous (closedness correspond to upper semi-continuity and openness to lower semi-continuity).
		Note that the rather technical condition $\card{B \setminus \rng(q)} ≤ 1$ and also clopenness of $\rng(q)$ can be obtained for every composition by removing $B \setminus \rng(q)$ and then eventually adding a clopen point (Remark~\ref{thm:emptyset}). Also, the closedness of $q$ is trivial for compact compositions.
	\end{definition}
	
	\begin{construction}[from composition to hyperspace] \label{con:to_hyperspace}
		To every composition $\A(q\maps A \to B)$ we assign the disjoint family $\F_\A := \set{q\fiber{b}: b ∈ B} ⊆ \powset{A}$.
		
		We have $\FiberMap{q}\maps B \onto \F_\A ⊆ \powset{A}$, so we have two natural topologies on $\F_\A$ – the quotient topology induced by $\FiberMap{q}$ from $B$, and the subspace topology induced from the hyperspace $\powset{A}$.
		By Proposition~\ref{thm:induced_maps} the Vietoris topology is finer than the quotient topology. The converse holds if and only if $q$ is both closed and open.
		The map $\FiberMap{q}$ is a homeomorphism with respect to the quotient topology if and only if it is a bijection, which happens if and only if $\card{B \setminus \rng(q)} ≤ 1$.
		Therefore, $\F_\A$ is homeomorphic to $B$ via $\FiberMap{q}$ if and only if the composition $\A$ is strong.
		
		In this case, if $\A$ is a \alt{compact}{Polish} composition of compacta, then $\F_\A$ is \alt{compact}{Polish}, and so it is a \alt{closed}{$G_δ$} subset of the \alt{compact}{Polish} hyperspace $\Compacta{A}$.
	\end{construction}
	
	\begin{observation} \label{thm:closed_family}
		If $\A(q\maps A \to B)$ is a strong composition, then the family $\F_\A$ is closed in every Hausdorff space $\H ⊆ \powset{A}$ containing it.
		
		\begin{proof}
			Let us consider the family $\unions{\F} := \set{F ∈ \H: q\preim{q\im{F}} = F}$, which is closed since $\PreimageMap{q} ∘ \ImageMap{q}$ is continuous and $\H$ is Hausdorff, and the family $\downset{\F} := (\ImageMap{q})\preim{\subsets{≤1}{B}}$ (where $\subsets{≤1}{B}$ denotes the family of all subsets of $B$ with at most one element), which is also closed since $B \homeo \F_\A$ is Hausdorff, and so $\subsets{≤1}{B}$ is closed in $\powset{B}$. To conclude, it is enough to observe that $\F_\A ⊆ \unions{\F} ∩ \downset{\F} ⊆ \F_\A ∪ \set{∅}$.
		\end{proof}
	\end{observation}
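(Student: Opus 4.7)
The plan is to trap $\F_\A$ between two natural closed subsets of $\H$ whose difference from $\F_\A$ is at most the single point $\set{\emptyset}$, and then dispose of that potential discrepancy using that $\set{\emptyset}$ is open in the Vietoris topology.

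Step one is the pointwise characterization: a set $F \in \H$ equals $q\fiber{b}$ for some $b \in B$ precisely when it is saturated under $q$, i.e.\ $q\preim{q\im{F}} = F$, and its image $q\im{F}$ contains at most one point. Accordingly I set $\unions{\F} := \set{F \in \H : q\preim{q\im{F}} = F}$ and $\downset{\F} := (\ImageMap{q})\preim{\subsets{\leq 1}{B}}$. A direct check yields $\F_\A \subseteq \unions{\F} \cap \downset{\F} \subseteq \F_\A \cup \set{\emptyset}$, the right-hand discrepancy arising because $\emptyset$ is trivially saturated with empty image even when it is not a fiber.

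Step two is closedness of $\unions{\F}$ and $\downset{\F}$ in $\H$. Since $\A$ is strong, $q$ is both closed and open, so Proposition~\ref{thm:induced_maps} makes both $\ImageMap{q}$ and $\PreimageMap{q}$ $\tau_V$-continuous, and hence so is their composition; its equalizer with $\id_\H$ is closed in $\H$ by Hausdorffness, giving that $\unions{\F}$ is closed. For $\downset{\F}$ it suffices to show that $\subsets{\leq 1}{B}$ is closed in $\powset{B}$: the space $B \homeo \F_\A$ inherits Hausdorffness from $\H$, so any $F \subseteq B$ containing distinct points $b_1, b_2$ admits disjoint open separators $U_1, U_2$, and the Vietoris open neighborhood $U_1^- \cap U_2^-$ of $F$ avoids $\subsets{\leq 1}{B}$; pulling back along the continuous $\ImageMap{q}$ closes the argument.

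Finally I would dispose of the $\emptyset$ obstruction. If $\emptyset \in \F_\A$ or $\emptyset \notin \H$, the sandwich collapses to $\F_\A = \unions{\F} \cap \downset{\F}$, which is already closed. In the remaining case $\emptyset \in \H \setminus \F_\A$, I invoke that $\emptyset^+ = \set{\emptyset}$ is a basic upper Vietoris open set, so $\set{\emptyset}$ is open in $\H$, and hence $\F_\A = (\unions{\F} \cap \downset{\F}) \setminus \set{\emptyset}$ is closed as a closed set minus an open set. I expect this last bookkeeping to be the main delicacy of the proof, since the two-condition characterization overshoots $\F_\A$ by exactly the point $\emptyset$.
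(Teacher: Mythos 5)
Your proof is correct and follows essentially the same route as the paper's: you sandwich $\F_\A$ between the same two families $\unions{\F}$ and $\downset{\F}$, establish their closedness by the same continuity-plus-Hausdorffness arguments, and observe the sandwich overshoots by at most $\set{∅}$. The only difference is that you spell out the final disposal of $\set{∅}$ (using that $∅^+ = \set{∅}$ is Vietoris-open), a step the paper leaves implicit.
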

	
	\begin{lemma} \label{thm:clopen_projection}
		\def \rnb{r.n.b.} % rectangular neigborhood basis
		
		Let $X$, $Y$ be topological spaces, and let $R ⊆ X × Y$. Let us consider the map $ρ\maps Y \to \powset{X}$ defined by $ρ(y) := R^y$.
		\begin{enumerate}
			\item The map $π_Y\restr{R}\maps R \to Y$ is open if and only if the map $ρ$ is $τ_V^-$-continuous.
			\item The map $π_Y\restr{R}\maps R \to Y$ is closed if and only if the map $ρ$ is $τ_V^+$-continuous and every set $R^y × \set{y}$ has a rectangular neighborhood basis (\rnb{}), i.e. every its neighborhood in $R$ contains a neighborhood of form $R ∩ (U × V)$ for some open sets $U$ and $V$. The \rnb{} condition is satisfied if $\rng(ρ) ⊆ \Compacta{X}$.
		\end{enumerate}
		
		\begin{proof}
			The necessity of $τ_V^{+/-}$-continuity follows from equality $ρ = \ImageMap{π_X} ∘ \FiberMap{(π_Y\restr{R})}$ and from Proposition~\ref{thm:induced_maps}.
			The open case follows from equality $π_Y\im{R ∩ (U × V)} = \set{y ∈ V: R^y ∩ U ≠ ∅} = ρ\preim{U^-} ∩ V$.
			The map $π_Y\restr{R}$ is closed if and only if for every closed $F ⊆ R$ and every $y ∈ Y \setminus π_Y\im{F}$ there is an open neighborhood $W$ of $y$ disjoint with $π_Y\im{F}$. Considering $R ∩ (X × W)$ gives us necessity of the \rnb{} condition.
			On the other hand, if $U × V$ is an open neighborhood of $R^y × \set{y} ≠ ∅$ disjoint with $F$, then we put $W := ρ\preim{U^+} ∩ V$. Note that $z ∈ ρ\preim{U^+}$ if and only if $R^z ⊆ U$. Hence, if $\tuple{x, z} ∈ R$ and $z ∈ W$, then $\tuple{x, z} ∈ U × V$ and so it cannot be in $F$. If $R^y = ∅$, then we put $W := Y \setminus π_Y\im{R}$, which is open since $π_Y\im{R} = ρ\preim{X^-}$ and $X^-$ is $τ_V^+$-closed.
			The \rnb{} condition holds if every $R^y × \set{y}$ is compact by the tube lemma \cite[3.1.15]{Engelking}.
		\end{proof}
	\end{lemma}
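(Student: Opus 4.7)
The plan is to translate $τ_V^-$- and $τ_V^+$-continuity of $ρ$ into properties of $π_Y\restr{R}$ by way of the two basic identities
\[
	ρ\preim{U^-} = π_Y\im{R ∩ (U × Y)}, \qquad ρ\preim{U^+} = Y \setminus π_Y\im{R ∩ ((X \setminus U) × Y)},
\]
valid for every open $U ⊆ X$ and both immediate from the definition of $ρ$.

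For (i), I would observe that basic open sets in $R$ have the form $R ∩ (U × V)$ with $U ⊆ X$, $V ⊆ Y$ open, and that $π_Y\im{R ∩ (U × V)} = ρ\preim{U^-} ∩ V$. Hence $π_Y\restr{R}$ is open iff $ρ\preim{U^-} ∩ V$ is open for all such $U, V$; taking $V = Y$ gives $τ_V^-$-continuity of $ρ$, and conversely intersecting $ρ\preim{U^-}$ with an arbitrary open $V$ preserves openness.

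For the forward direction of (ii), if $π_Y\restr{R}$ is closed then $π_Y\im{R ∩ (C × Y)}$ is closed for every closed $C ⊆ X$, so complementing via the second identity yields $τ_V^+$-continuity of $ρ$. The r.n.b.\ condition comes for free: for any neighborhood $N$ of $R^y × \set{y}$ in $R$, the set $F := R \setminus N$ is closed in $R$ with $y \notin π_Y\im{F}$, so there is an open $V ∋ y$ disjoint from $π_Y\im{F}$, and then $R ∩ (X × V) ⊆ N$ is a rectangular neighborhood (with $U = X$). For the backward direction, take $F ⊆ R$ closed and $y \notin π_Y\im{F}$. If $R^y ≠ ∅$, apply the r.n.b.\ to the neighborhood $R \setminus F$ of $R^y × \set{y}$ to obtain open $U ⊆ X$, $V ⊆ Y$ with $R^y ⊆ U$, $y ∈ V$, and $R ∩ (U × V) ⊆ R \setminus F$; then $W := ρ\preim{U^+} ∩ V$ is open by $τ_V^+$-continuity, contains $y$, and is disjoint from $π_Y\im{F}$ because any $\tuple{x, z} ∈ R$ with $z ∈ W$ satisfies $x ∈ R^z ⊆ U$ and $z ∈ V$, hence lies in $R ∩ (U × V) ⊆ R \setminus F$. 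The degenerate case $R^y = ∅$ is handled by $W := ρ\preim{∅^+} = Y \setminus π_Y\im{R}$, which is open by $τ_V^+$-continuity (since $∅$ is open in $X$) and is automatically disjoint from $π_Y\im{F}$.

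Finally, the r.n.b.\ under compactness of $R^y$ is exactly the tube lemma: given open $W ⊆ X × Y$ with $R^y × \set{y} ⊆ W$, cover $R^y$ by finitely many rectangles $U_i × V_i ⊆ W$ with $y ∈ V_i$ and set $U := \bigcup_i U_i$, $V := \bigcap_i V_i$. I expect the main obstacle to be keeping the r.n.b.\ formalism straight and separately treating the empty-fiber case in the backward direction of (ii); both amount to bookkeeping once the two preimage identities above are in hand.
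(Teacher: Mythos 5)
Your proposal is correct and follows essentially the same route as the paper: the open case via $π_Y\im{R ∩ (U × V)} = ρ\preim{U^-} ∩ V$, the closed case via the set $W := ρ\preim{U^+} ∩ V$ with the empty-fiber case handled by $ρ\preim{∅^+} = Y \setminus π_Y\im{R}$, necessity of r.n.b.\ from the tube $R ∩ (X × V)$, and the tube lemma for compact sections. The only cosmetic difference is that you derive the necessity of $τ_V^{\pm}$-continuity from explicit preimage identities rather than citing the factorization $ρ = \ImageMap{π_X} ∘ \FiberMap{(π_Y\restr{R})}$ together with the proposition on induced maps, which amounts to the same computation.
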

	
	\begin{corollary} \label{thm:hyperspace_compositions_are_strong}
		Let $\A_\F(q\maps A_\F \to \F)$ be the composition obtained by Construction~\ref{con:from_hyperspace} from a family $\F ⊆ \powset{X}$.
		We have that the map $q$ is open and $\card{\F \setminus \rng(q)} ≤ 1$.
		If $\F ⊆ \Compacta{X}$, then $q$ is also closed, and hence the composition is strong.
		
		\begin{proof}
			The map $q$ is the projection $\R_∈ ∩ (X × \F) \to \F$, so we may use Lemma~\ref{thm:clopen_projection}.
			The corresponding map $ρ$ is $\id\maps \F \to \powset{X}$, which is both $τ_V^-$- and $τ_V^+$-continuous.
			The fact that $\card{\F \setminus \rng(q)} ≤ 1$ is clear since there is only one empty set.
		\end{proof}
	\end{corollary}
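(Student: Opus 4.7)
The plan is to reduce the corollary directly to Lemma~\ref{thm:clopen_projection} by identifying the ingredients of that lemma in the present setting. Here we have $R := A_\F = \set{\tuple{x,F} : x ∈ F ∈ \F} ⊆ X × \F$ and $q = π_\F\restr{A_\F}$, so for any $F ∈ \F$ the section is $R^F = F$. Consequently the map $ρ \maps \F \to \powset{X}$ associated to $R$ in the lemma is simply the inclusion $\F ↪ \powset{X}$, i.e.\ the identity on $\F$ regarded as a subspace of $\powset{X}$.

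First I would handle openness: the inclusion $\F ↪ \powset{X}$ is trivially continuous with respect to both $τ_V^-$ and $τ_V^+$ (the subspace topology on $\F$ is by definition induced by these), so Lemma~\ref{thm:clopen_projection}(i) yields that $q$ is open. Next I would dispatch the cardinality condition: since $q\fiber{F} = F$, we have $F ∈ \rng(q)$ if and only if $F ≠ ∅$, so $\F \setminus \rng(q) ⊆ \set{∅}$ has at most one element.

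For the second part, assume $\F ⊆ \Compacta{X}$. Then $\rng(ρ) ⊆ \Compacta{X}$, which by the final clause of Lemma~\ref{thm:clopen_projection} (the tube-lemma remark) gives the rectangular-neighborhood-basis condition for every section $R^F × \set{F}$. Combined with the $τ_V^+$-continuity of $ρ = \id_\F$, part (ii) of the lemma shows that $q$ is closed. Together with openness and $\card{\F \setminus \rng(q)} ≤ 1$, this is precisely the definition of a strong composition.

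There is no real obstacle here; the whole statement is a bookkeeping corollary of Lemma~\ref{thm:clopen_projection}. The only thing to be a bit careful about is recognizing that the subspace topology on $\F$ is already the join of the restrictions of $τ_V^-$ and $τ_V^+$, so that both one-sided continuities of $ρ$ come for free.
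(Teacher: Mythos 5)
Your proposal is correct and follows exactly the paper's own route: it identifies $A_\F$ with the relation $R$ of Lemma~\ref{thm:clopen_projection}, observes that the associated map $ρ$ is the inclusion $\F \hookrightarrow \powset{X}$ (hence $τ_V^-$- and $τ_V^+$-continuous for free), and derives openness, the cardinality bound via the empty set, and closedness from the compact-fibers clause of the lemma. Nothing to add.
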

	
	\begin{corollary} \label{thm:strong_pullback}
		Let $\A(q\maps A \to B)$ be a composition of spaces $\tuple{A_b}_{b ∈ B}$, let $f\maps B' \to B$ be a continuous map, and let $\A'(q'\maps A' \to B')$ be the pullback of $\A$ along $f$ (Construction~\ref{con:pullback}).
		If $q$ is open, so is $q'$.
		If $q$ is closed and every space $A_b$ is compact, then $q'$ is also closed.
		It follows that strong compositions of compact spaces are preserved by pullbacks (such that $\card{f\preim{B \setminus \rng(f)}} ≤ 1$).
		
		\begin{proof}
			We apply Lemma~\ref{thm:clopen_projection} to $A' ⊆ A × B'$.
			The corresponding map $ρ$ is $\FiberMap{q} ∘ f$, which is \alt{$τ_V^-$-}{$τ_V^+$-}continuous if $q$ is \alt{open}{closed} by Proposition~\ref{thm:induced_maps}.
		\end{proof}
	\end{corollary}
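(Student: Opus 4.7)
The plan is to invoke Lemma~\ref{thm:clopen_projection} on the rectangular set $A' ⊆ A × B'$ with the projection $q' = π_{B'}\restr{A'}$. A direct computation shows that for each $b' ∈ B'$ the fiber is $(A')^{b'} = \set{a ∈ A : q(a) = f(b')} = q\fiber{f(b')}$, so the map $ρ\maps B' \to \powset{A}$ supplied by Lemma~\ref{thm:clopen_projection} is exactly $\FiberMap{q} ∘ f$. This identification is essentially the whole proof; everything else is a bookkeeping invocation of the machinery already assembled.

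Next I would feed this into Proposition~\ref{thm:induced_maps}: openness of $q$ makes $\FiberMap{q}$ be $τ_V^-$-continuous, closedness of $q$ makes it $τ_V^+$-continuous, and in either case $ρ$ inherits the same property via post-composition with the continuous $f$. The open part of Lemma~\ref{thm:clopen_projection} then yields openness of $q'$ from openness of $q$. For the closed part, the rectangular neighborhood basis condition is what one has to check, and it is given for free by the final clause of Lemma~\ref{thm:clopen_projection} as soon as every fiber $(A')^{b'} × \set{b'} \homeo A_{f(b')}$ is compact — which is exactly the hypothesis that every $A_b$ is compact.

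For the concluding clause, a strong composition of compacta has $q$ simultaneously closed and open with compact fibers and with $\card{B \setminus \rng(q)} ≤ 1$, so by the two parts just proved $q'$ is also closed and open. It remains only to verify that $\card{B' \setminus \rng(q')} ≤ 1$, and since $\rng(q') = f\preim{\rng(q)}$, we have $B' \setminus \rng(q') = f\preim{B \setminus \rng(q)}$, which contains at most one point exactly under the side condition displayed in the statement. I expect no real obstacle in any of this — the heavy lifting is entirely packaged inside Lemma~\ref{thm:clopen_projection} and Proposition~\ref{thm:induced_maps}, and the corollary is really just the observation that the map $ρ$ produced by the pullback factors as $\FiberMap{q} ∘ f$.
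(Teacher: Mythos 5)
Your proposal is correct and follows exactly the paper's own argument: apply Lemma~\ref{thm:clopen_projection} to $A' ⊆ A × B'$, identify the map $ρ$ as $\FiberMap{q} ∘ f$, and transfer $τ_V^-$- resp.\ $τ_V^+$-continuity from Proposition~\ref{thm:induced_maps}, with compactness of the fibers supplying the rectangular neighborhood basis condition. Your explicit verification of the final clause (in particular that $B' \setminus \rng(q') = f\preim{B \setminus \rng(q)}$) is a detail the paper leaves implicit, and your reading of the side condition as referring to $\rng(q)$ rather than $\rng(f)$ is the intended one.
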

	
	By putting all the previous claims and propositions together, we obtain the following characterizations – compare with Theorem~\ref{thm:compactifiable_characterization} and \ref{thm:Polishable_characterization}.
	
	\begin{theorem} \label{thm:strongly_compactifiable_characterization}
		The following conditions are equivalent for a class $\C$ of topological spaces.
		\begin{enumerate}
			\item $\C$ is strongly compactifiable.
			\item There is a metrizable compactum $X$ and a closed family $\F ⊆ \Compacta{X}$ such that $\F \homeo \C$.
			\item There is a closed zero-dimensional disjoint family $\F ⊆ \Compacta{[0, 1]^ω}$ such that $\F \homeo \C$.
		\end{enumerate}
	\end{theorem}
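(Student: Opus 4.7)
The plan is to prove the theorem as a cycle of implications \locref{i}$\Rightarrow$\locref{iii}$\Rightarrow$\locref{ii}$\Rightarrow$\locref{i}, using the two constructions already set up in this section: Construction~\ref{con:from_hyperspace} (family $\to$ composition) and Construction~\ref{con:to_hyperspace} (composition $\to$ family), together with Corollary~\ref{thm:hyperspace_compositions_are_strong} and Corollary~\ref{thm:strong_pullback}, which tell us that these constructions preserve strongness in the desired directions.

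For \locref{i}$\Rightarrow$\locref{iii}, I would start with a strong compact composition $\A(q\maps A \to B)$ of $\C$. First I want to replace $B$ with the zero-dimensional space $2^ω$: pick a continuous surjection $f\maps 2^ω \onto B$ and take the pullback $\A'$ along $f$. By Corollary~\ref{thm:strong_pullback} (the condition $\card{f\preim{B \setminus \rng(f)}} \leq 1$ is trivial since $f$ is onto), the pullback remains a strong compact composition, and it is still a composition of $\C$ since the collection of fibers is the same up to homeomorphism. Next, embed the composition space into $[0,1]^ω$; this embedding extends to an embedding $\Compacta{A} \hookrightarrow \Compacta{[0,1]^ω}$ of hyperspaces. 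Finally, apply Construction~\ref{con:to_hyperspace} to obtain $\F_{\A'} \subseteq \Compacta{[0,1]^ω}$. By the discussion there, strongness guarantees that $\FiberMap{q'}\maps 2^ω \to \F_{\A'}$ is a homeomorphism, so $\F_{\A'}$ is a metrizable compactum that is zero-dimensional and equivalent to $\C$; hence it is a closed subset of $\Compacta{[0,1]^ω}$, and it is disjoint by construction (distinct fibers).

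The implication \locref{iii}$\Rightarrow$\locref{ii} is immediate by taking $X = [0,1]^ω$.

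For \locref{ii}$\Rightarrow$\locref{i}, given a closed family $\F \subseteq \Compacta{X}$ with $\F \homeo \C$, apply Construction~\ref{con:from_hyperspace}. Observation~\ref{thm:membership_closed} gives that $A_\F = \R_\in \cap (X \times \F)$ is closed in the metrizable compactum $X \times \F$, so $A_\F$ is itself a metrizable compactum, and the associated composition $\A_\F$ is therefore compact with indexing space $\F$. Corollary~\ref{thm:hyperspace_compositions_are_strong} (applied to $\F \subseteq \Compacta{X}$) immediately tells us that $\A_\F$ is strong, so $\C \homeo \F$ is strongly compactifiable.

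I do not expect any real obstacle; the argument is essentially bookkeeping over the tools assembled earlier. The most delicate point is making sure that the pullback step in \locref{i}$\Rightarrow$\locref{iii} genuinely preserves strongness — this relies on the compactness of the fibers (used in Corollary~\ref{thm:strong_pullback} to derive closedness of $q'$) and the triviality of the $\card{f\preim{B \setminus \rng(f)}} \leq 1$ hypothesis under surjectivity of $f$. Everything else follows mechanically from the machinery already in place.
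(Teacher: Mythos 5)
Your proposal is correct and follows essentially the same route as the paper: Construction~\ref{con:from_hyperspace} with Corollary~\ref{thm:hyperspace_compositions_are_strong} for (ii)$\Rightarrow$(i), and a pullback onto a zero-dimensional compact indexing space followed by Construction~\ref{con:to_hyperspace} and an embedding of hyperspaces for (i)$\Rightarrow$(iii). The one point to tighten is the parenthetical hypothesis of Corollary~\ref{thm:strong_pullback}, which is really about $f\preim{B \setminus \rng(q)}$ having at most one point (so that $\card{B' \setminus \rng(q')} \leq 1$ survives the pullback); surjectivity of $f$ alone does not give this, so you should first make $q$ surjective (Remark~\ref{thm:emptyset}) or pick $f$ with a singleton fiber over the single point possibly missed by $q$.
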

	
	\begin{theorem} \label{thm:strongly_Polishable_characterization}
		The following conditions are equivalent for a class $\C$ of topological spaces.
		\begin{enumerate}
			\item $\C$ is a strongly Polishable class of compacta.
			\item There is a Polish space $X$ and an analytic family $\F ⊆ \Compacta{X}$ such that $\F \homeo \C$.
			\item There is a $G_δ$ zero-dimensional disjoint family $\F ⊆ \Compacta{[0, 1]^ω}$ such that $\F \homeo \C$.
			\item There is a closed zero-dimensional disjoint family $\F ⊆ \Compacta{(0, 1)^ω}$ such that $\F \homeo \C$.
		\end{enumerate}
	\end{theorem}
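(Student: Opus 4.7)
The plan is to cycle through (i)$\Rightarrow$(iv)$\Rightarrow$(iii)$\Rightarrow$(ii)$\Rightarrow$(i), mirroring the argument already carried out for Theorem~\ref{thm:strongly_compactifiable_characterization}. The ingredients are all in place: the two hyperspace--composition translations (Constructions~\ref{con:from_hyperspace} and~\ref{con:to_hyperspace}), the pullback machinery together with its Polishness (Lemma~\ref{thm:analytic_pullback}) and strongness (Corollary~\ref{thm:strong_pullback}) preservation, the strongness of the natural composition (Corollary~\ref{thm:hyperspace_compositions_are_strong}), and the closedness of $\F_\A$ (Observation~\ref{thm:closed_family}). The step (iii)$\Rightarrow$(ii) is free since every $G_δ$ subset of a Polish space is analytic.

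For (i)$\Rightarrow$(iv), I start with a strong Polish composition $\A(q\maps A \to B)$ of $\C$ and pull it back along a continuous surjection $ω^ω \onto B$ to obtain a strong Polish composition whose indexing space is the zero-dimensional Polish space $ω^ω$. I then embed $A$ as a closed subspace of $(0, 1)^ω$ using \cite[4.17]{Kechris}. Construction~\ref{con:to_hyperspace} identifies the new indexing space with the disjoint family $\F_\A ⊆ \Compacta{(0, 1)^ω}$, which is therefore zero-dimensional and equivalent to $\C$. Observation~\ref{thm:closed_family} gives closedness of $\F_\A$ in $\Compacta{A}$, while $\Compacta{A}$ is closed in $\Compacta{(0, 1)^ω}$ because $A$ is closed in $(0, 1)^ω$, so $\F_\A$ is closed in $\Compacta{(0, 1)^ω}$ as required.

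For (iv)$\Rightarrow$(iii), write $(0, 1)^ω = ⋂_n U_n$ with each $U_n$ open in $[0, 1]^ω$; then $\Compacta{(0, 1)^ω} = ⋂_n U_n^+$ is $G_δ$ in the Polish hyperspace $\Compacta{[0, 1]^ω}$, and a closed subset of a $G_δ$ subset of a Polish space is itself $G_δ$. For (ii)$\Rightarrow$(i), given an analytic $\F ⊆ \Compacta{X}$ I form the natural composition $\A_\F$ of Construction~\ref{con:from_hyperspace}, which is strong by Corollary~\ref{thm:hyperspace_compositions_are_strong}, and I take a continuous surjection $f\maps B' \onto \F$ from a Polish space $B'$ and pull $\A_\F$ back along $f$. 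Observation~\ref{thm:membership_closed} shows that $A_\F$ is closed and in particular $G_δ$ in $X × \F$, so Lemma~\ref{thm:analytic_pullback} yields that the pullback is Polish, and Corollary~\ref{thm:strong_pullback} yields that it is strong.

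The main technical nuisance throughout is the clause $\card{B \setminus \rng(q)} \leq 1$ in the definition of strong composition: pulling back along $f$ threatens to replace the possibly-missing single point by the entire preimage $f\preim{∅}$. I handle this uniformly by choosing the source of each Polish surjection to be of the form $ω^ω \topsum \set{\mathrm{pt}}$ and by sending the added isolated point to the empty fiber whenever $∅ ∈ \C$; since $\F \setminus \set{∅}$ remains analytic whenever $\F$ is, a suitable surjection always exists.
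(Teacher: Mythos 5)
Your proof is correct and takes essentially the same approach as the paper's: the same hyperspace--composition translations (Constructions~\ref{con:from_hyperspace} and~\ref{con:to_hyperspace}), pullbacks along Polish surjections via Lemma~\ref{thm:analytic_pullback} and Corollary~\ref{thm:strong_pullback}, the closed embedding into $(0,1)^ω$, and the same care with the $\card{B \setminus \rng(q)} ≤ 1$ clause. The only cosmetic difference is that you derive (iii) from (iv) by observing that $\Compacta{(0,1)^ω}$ is $G_δ$ in $\Compacta{[0,1]^ω}$, whereas the paper obtains (iii) directly by noting that the Polish family $\ImageMap{e}\im{\F_\A}$ is $G_δ$ in $\Compacta{[0,1]^ω}$.
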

	
	\begin{proof}
		Let $\F ⊆ \Compacta{X}$.
		Construction~\ref{con:from_hyperspace} gives us the corresponding composition $\A_\F$, which is strong by Corollary~\ref{thm:hyperspace_compositions_are_strong}.
		If $X$ is a metrizable compactum and $\F$ is closed, then the composition $\A_\F$ is compact.
		If $X$ is Polish and $\F$ is analytic, then there is a continuous surjection $f\maps Y \onto \F$ from a Polish space $Y$ such that $\card{f\fiber{∅}} ≤ 1$.
		The pullback of $\A_\F$ along $f$ (Construction~\ref{con:pullback}) is a composition of $\F$ that is Polish by Lemma~\ref{thm:analytic_pullback} and strong by Corollary~\ref{thm:strong_pullback}.
		
		On the other hand, let $\A(q\maps A \to B)$ be a \alt{strong compact}{strong Polish} composition of $\C$.
		Without loss of generality, $B$ is zero-dimensional (we use Construction~\ref{con:pullback} as in Theorem~\ref{thm:compactifiable_characterization} and \ref{thm:Polishable_characterization} together with Corollary~\ref{thm:strong_pullback}).
		Construction~\ref{con:to_hyperspace} gives us the corresponding zero-dimensional disjoint family $\F_\A ⊆ \Compacta{A}$, which is closed by Observation~\ref{thm:closed_family}.
		There is an embedding $e\maps A \into [0, 1]^ω$, and so $\ImageMap{e}\maps\Compacta{A} \into \Compacta{[0, 1]^ω}$ is an embedding by Proposition~\ref{thm:induced_maps}.
		In the compact case, $\ImageMap{e}\im{\F_\A}$ is compact and so closed in $\Compacta{[0, 1]^ω}$.
		In the Polish case, $\ImageMap{e}\im{\F_\A}$ is Polish and so $G_δ$ in $\Compacta{[0, 1]^ω}$.
		Moreover, there is a closed embedding $i\maps A \into (0, 1)^ω$ by \cite[4.17]{Kechris}, and so $\ImageMap{i}\maps \Compacta{A} \into \Compacta{(0, 1)^ω}$ is a closed embedding by Proposition~\ref{thm:induced_maps}.
		Hence, $\ImageMap{i}\im{\F_\A}$ is a closed subset of $\Compacta{(0, 1)^ω}$.
		
		The remaining implications are trivial.
	\end{proof}
	
	\begin{lemma} \label{thm:semicontinuous_distance}
		Let $X$ be a metric space and let $\R$ denote the family $\set{\tuple{A, B} ∈ \Compacta{X}^2: A ⊆ B}$ viewed as a subspace of $\tuple{\Compacta{X}, τ_V} × \tuple{\Compacta{X}, τ_V^+}$.
		The Hausdorff metric $d_H\maps \R \to [0, ∞)$ is upper semi-continuous.
		
		\begin{proof}
			Let $\tuple{A, B} ∈ \R$ and $r > d_H(A, B)$.
			We want to find $\U$ a $τ_V$-neighborhood of $A$ and $\V$ a $τ_V^+$-neighborhood of $B$ such that $d_H(A', B') < r$ for every $A' ∈ \U$ and $B' ∈ \V$.
			
			For every $\tuple{A', B'} ∈ \R$ we have $d_H(A', B') = δ(B', A') = \inf\set{ε > 0: B' ⊆ N_ε(A')}$.
			Hence, $d_H(A', B') = δ(B', A') ≤ δ(B', B) + δ(B, A) + δ(A, A') ≤ δ(B', B) + d_H(A, B) + d_H(A, A')$.
			
			Let $ε > 0$ such that $d_H(A, B) + 2ε < r$.
			We put $\U := \set{A': d_H(A, A') < ε}$ and $\V := N_ε(B)^+$.
			The set $\U$ is $τ_V$-open since the Hausdorff metric topology coincides with the Vietoris topology on $\Compacta{X}$, and $\V$ is clearly $τ_V^+$-open.
			Moreover, for every $B' ∈ \V$ we have $δ(B', B) ≤ ε$.
			Therefore, for every $\tuple{A', B'} ∈ \U × \V$ we have $d_H(A', B') < d_H(A, B) + 2ε < r$.
		\end{proof}
	\end{lemma}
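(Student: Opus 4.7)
The plan is to use a triangle-type inequality for the functional $\delta$ and then choose neighborhoods of $A$ and $B$ tailored to the two topologies appearing in the product $\tuple{\Compacta{X}, τ_V} × \tuple{\Compacta{X}, τ_V^+}$.

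First, I would record the key inequality: for any compact sets $P, Q, R \subseteq X$, one has $\delta(P, R) \leq \delta(P, Q) + \delta(Q, R)$, a routine triangle-like estimate (pick near-optimal intermediate points). Iterating this gives $\delta(B', A') \leq \delta(B', B) + \delta(B, A) + \delta(A, A')$. Since $(A, B), (A', B') \in \R$ means $A \subseteq B$ and $A' \subseteq B'$, we have $d_H(A, B) = \delta(B, A)$ and $d_H(A', B') = \delta(B', A')$; in particular, $\delta(A, A') \leq d_H(A, A')$.

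Next, given $(A, B) \in \R$ and $r > d_H(A, B)$, I would choose $\varepsilon > 0$ with $d_H(A, B) + 2\varepsilon < r$ and define the neighborhoods
\begin{align*}
\U &:= \set{A' \in \Compacta{X} : d_H(A, A') < \varepsilon}, \\
\V &:= N_\varepsilon(B)^+ = \set{B' \in \Compacta{X} : B' \subseteq N_\varepsilon(B)}.
\end{align*}
The set $\U$ is $\tau_V$-open because the Hausdorff metric topology coincides with the Vietoris topology on $\Compacta{X}$, and $\V$ is $\tau_V^+$-open by definition. For any $(A', B') \in (\U \times \V) \cap \R$, the displayed inequality yields $d_H(A', B') = \delta(B', A') \leq \delta(B', B) + d_H(A, B) + d_H(A, A') < \varepsilon + d_H(A, B) + \varepsilon < r$, as required.

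The only mildly delicate point is that the second factor only supplies the upper Vietoris topology, so I cannot control $\delta(B, B')$, only $\delta(B', B)$. Fortunately the triangle chain above is arranged precisely so that only $\delta(B', B)$ enters, and this is exactly what the $\tau_V^+$-basic open set $N_\varepsilon(B)^+$ controls. That is what makes upper (but not lower) semi-continuity of $d_H$ natural on the product $\tau_V \times \tau_V^+$, and the asymmetry works in our favor.
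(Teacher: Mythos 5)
Your proof is correct and follows essentially the same route as the paper's: the same triangle-type chain $\delta(B',A') \leq \delta(B',B) + \delta(B,A) + \delta(A,A')$, the same choice of $\varepsilon$ with $d_H(A,B) + 2\varepsilon < r$, and the same neighborhoods $\U = \set{A' : d_H(A,A') < \varepsilon}$ and $\V = N_\varepsilon(B)^+$. Your closing remark about why only $\delta(B',B)$ needs to be controlled is exactly the point the paper's argument exploits implicitly.
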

	
	\begin{proposition} \label{thm:compactifiable_G_delta}
		Let $\A(q\maps A \to B)$ be a Polish composition of compacta such that the composition map $q$ is closed.
		The family $\F_\A ⊆ \Compacta{A}$ obtained via Construction~\ref{con:to_hyperspace} is $G_δ$.
		
		\begin{proof}
			As in the proof of Observation~\ref{thm:closed_family} we have $\F_\A ⊆ \unions{\F} ∩ \downset{\F} ⊆ \F_\A ∪ \set{∅}$, and the family $\downset{\F}$ is closed.
			But the family $\unions{\F} = \set{F ∈ \Compacta{A}: \widehat{F} := q\preim{q\im{F}} = F}$ is now not necessarily closed since the map $\PreimageMap{q} ∘ \ImageMap{q}$ is not necessarily continuous.
			It is only $τ_V^+$-continuous since $q$ is closed.
			
			Let $d$ be a compatible metric on $A$ and let $\G_n := \set{F ∈ \Compacta{A}: d_H(F, \widehat{F}) < \frac{1}{n}}$. Clearly, $\unions{\F} = ⋂_{n ∈ ℕ} \G_n$, so it is enough to show that each $\G_n$ is open.
			Let $\R$ be the space from Lemma~\ref{thm:semicontinuous_distance} for the base space $A$.
			The map $\id_{\F_\A} \diag (\PreimageMap{q} ∘ \ImageMap{q})\maps \F_\A \to \R$ that maps $F \mapsto \tuple{F, \widehat{F}}$ is continuous since $\PreimageMap{q} ∘ \ImageMap{q}$ is $τ_V^+$-continuous.
			By Lemma~\ref{thm:semicontinuous_distance} the map $d_H\maps \R \to [0, ∞)$ is upper semi-continuous.
			Together, the map $F \mapsto d_H(F, \widehat{F})$ is upper semi-continuous, and the families $\G_n$ are open.
		\end{proof}
	\end{proposition}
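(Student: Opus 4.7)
The plan is to revisit the argument of Observation~\ref{thm:closed_family} and upgrade the parts that become weaker without openness of $q$. I keep the same sandwiching $\F_\A ⊆ \unions{\F} ∩ \downset{\F} ⊆ \F_\A ∪ \set{∅}$, where $\unions{\F} := \set{F ∈ \Compacta{A}: \widehat{F} = F}$ with $\widehat{F} := q\preim{q\im{F}}$, and $\downset{\F} := (\ImageMap{q})\preim{\subsets{≤1}{B}}$. The set $\downset{\F}$ is still closed because $B$ is Hausdorff and $\ImageMap{q}$ is $τ_V$-continuous, and the singleton $\set{∅}$ is clopen in $\Compacta{A}$. So the task reduces to showing that $\unions{\F}$ is $G_δ$ in $\Compacta{A}$.

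The wrinkle is that, without openness of $q$, the composition $\PreimageMap{q} ∘ \ImageMap{q}$ is merely $τ_V^+$-continuous (by Proposition~\ref{thm:induced_maps}(i),(vi), using that $q$ is continuous and closed), so $\unions{\F}$ cannot be expected to be closed. My idea is to reinterpret this fixed-point set in metric form. Fix a compatible metric $d$ on $A$; since $F ⊆ \widehat{F}$ always, one has $\unions{\F} = ⋂_{n ∈ ℕ} \set{F ∈ \Compacta{A}: d_H(F, \widehat{F}) < 1/n}$, so it suffices to show that the function $F \mapsto d_H(F, \widehat{F})$ is upper semi-continuous on $\Compacta{A}$.

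For that I would invoke Lemma~\ref{thm:semicontinuous_distance} with base space $A$: on the subspace $\R ⊆ (\Compacta{A}, τ_V) × (\Compacta{A}, τ_V^+)$ of pairs $(E, G)$ with $E ⊆ G$, the map $d_H$ is u.s.c. The map $F \mapsto (F, \widehat{F})$ lands in $\R$, has first coordinate the $τ_V$-identity, and has second coordinate the $τ_V^+$-continuous map $\PreimageMap{q} ∘ \ImageMap{q}$; hence it is continuous into $\R$. Composing with $d_H$ delivers the desired upper semi-continuity.

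The step I expect to be the crux is precisely this packaging — exploiting the mixed topology $(τ_V, τ_V^+)$ on $\R$ to absorb the loss of openness of $q$. A naive attempt to get $\unions{\F}$ closed from continuity alone must fail, since $τ_V^+$-convergence controls only how $\widehat{F}$ can expand; it is the metric description via Lemma~\ref{thm:semicontinuous_distance} that converts one-sided control into the two-sided estimate needed to make each sublevel set $\set{F: d_H(F, \widehat{F}) < 1/n}$ open.
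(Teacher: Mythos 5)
Your proposal is correct and follows essentially the same route as the paper: the same sandwiching $\F_\A ⊆ \unions{\F} ∩ \downset{\F} ⊆ \F_\A ∪ \set{∅}$, the same reduction of $\unions{\F}$ to the countable intersection of the sublevel sets $\set{F : d_H(F, \widehat{F}) < 1/n}$, and the same appeal to Lemma~\ref{thm:semicontinuous_distance} combined with the $τ_V^+$-continuity of $\PreimageMap{q} ∘ \ImageMap{q}$ to get upper semi-continuity of $F \mapsto d_H(F, \widehat{F})$. No changes needed.
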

	
	\begin{corollary}
		Every compactifiable class is strongly Polishable.
		Also, in the definition of strong Polishability it is enough that the witnessing composition map is closed.
	\end{corollary}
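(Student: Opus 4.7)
The plan is to derive both claims directly from Proposition~\ref{thm:compactifiable_G_delta} combined with Theorem~\ref{thm:strongly_Polishable_characterization}. Given any composition $\A(q \maps A \to B)$ of $\C$ satisfying the hypotheses of the proposition, the proposition places $\F_\A$ as a $G_δ$ subset of $\Compacta{A}$. Whenever $A$ is Polish, the hyperspace $\Compacta{A}$ is Polish, so $\F_\A$ is Polish and in particular analytic. Since $\F_\A = \set{q\fiber{b}: b \in B} \homeo \C$ by the definition of a composition of $\C$, the characterization of strongly Polishable classes via analytic families in $\Compacta{X}$ given by Theorem~\ref{thm:strongly_Polishable_characterization} then yields that $\C$ is strongly Polishable.

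For the first claim, I would start from a compact composition $\A(q \maps A \to B)$ witnessing that $\C$ is compactifiable. Then $A$ is a metrizable compactum, hence Polish; the fibers $q\fiber{b}$ are automatically compact; and $q$ is automatically closed, being a continuous map between compact Hausdorff spaces. Thus the hypothesis ``Polish composition of compacta with closed composition map'' of Proposition~\ref{thm:compactifiable_G_delta} is satisfied, and the pipeline above applies.

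For the second claim, the key observation is that Proposition~\ref{thm:compactifiable_G_delta} requires only closedness (not openness) of the composition map. So, starting from a Polish composition of compacta in which $q$ is merely closed, exactly the same argument delivers strong Polishability, showing that the ``closed and open'' condition in the definition of a strong composition can be weakened to ``closed'' when searching for strong Polish compositions of classes of compacta.

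There is essentially no obstacle here beyond verifying that the hypotheses of Proposition~\ref{thm:compactifiable_G_delta} are met in each case; the main structural work was already done in that proposition and in Theorem~\ref{thm:strongly_Polishable_characterization}.
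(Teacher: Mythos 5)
Your argument is correct and is precisely the route the paper intends: the corollary is stated immediately after Proposition~\ref{thm:compactifiable_G_delta} and is meant to follow by feeding the resulting $G_δ$ (hence analytic) family $\F_\A ⊆ \Compacta{A}$ into the characterization of Theorem~\ref{thm:strongly_Polishable_characterization}, exactly as you do. Your verification of the hypotheses (a compact composition is a Polish composition of compacta with automatically closed composition map, and only closedness is needed in the proposition) is the whole content of the corollary, so nothing is missing.
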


	We have shown that every compactifiable class is strongly Polishable.
	On the other hand, strongly Polishable classes of compacta are sometimes close to being compactifiable.
	Compare the following characterization with Theorem~\ref{thm:compactifiable_characterization} and \ref{thm:Polishable_characterization}.
	
	\begin{theorem} \labelblock{thm:strongly_Polishable_rectangular_characterization}
		The following conditions are equivalent for a class $\C$ of topological spaces.
		\begin{enumerate}
			\item $\C$ is a strongly Polishable class of compacta. \loclabel{composition}
			\item There is a metrizable compactum $A$, an analytic space $B$, and a closed set $F ⊆ A × B$ such that $\set{F^b: b ∈ B} \homeo \C$. \loclabel{rectangular}
			\item There is a closed set $F ⊆ [0, 1]^ω × ω^ω$ such that $\set{F^b: b ∈ ω^ω} \homeo \C$, or $\C = ∅$. \loclabel{rigid}
			\item There is a closed set $F ⊆ [0, 1]^ω × 2^ω$ and a $G_δ$ set $G ⊆ 2^ω$ such that $\set{F^b: b ∈ G} \homeo \C$ and $\set{F^b: b ∈ 2^ω} = \clo{\set{F^b: b ∈ G}}$ in $\Compacta{[0, 1]^ω}$, or $\C = ∅$. \loclabel{rigid_compact}
		\end{enumerate}
		
		\begin{proof}
			\locimpl{rectangular}{composition}: Let $f\maps B' \onto B$ be a continuous surjection from a Polish space $B'$.
			Let $F'$ denote the set $\set{\tuple{a, b'} ∈ A × B': \tuple{a, f(b')} ∈ F}$, which is closed as a continuous preimage of $F$.
			The induced rectangular composition $\A_{F'}(q\maps F' \to B')$ is a Polish composition of $\C$ (cf. Lemma~\ref{thm:analytic_pullback}).
			The map $q = π_{B'}\restr{F'}$ is closed by the Kuratowski theorem \cite[Theorem~3.1.16]{Engelking} since $A$ is compact.
			Therefore, $\C$ is strongly Polishable by Proposition~\ref{thm:compactifiable_G_delta}.
			
			\locimpl{composition}{rigid} and \locimpl{composition}{rigid_compact}:
			By Theorem~\ref{thm:strongly_Polishable_characterization} there is a $G_δ$ family $\F ⊆ \Compacta{[0, 1]^ω}$ equivalent to $\C$.
			We may suppose that $\F$ is nonempty.
			For \locref{rigid} we consider the composition $\A_\F$ (Construction~\ref{con:from_hyperspace}) and its pullback (Construction~\ref{con:pullback}) along a continuous surjection $f\maps ω^ω \onto \F$, i.e. $F := \set{\tuple{x, y} ∈ [0, 1]^ω × ω^ω: x ∈ f(y)}$.
			For \locref{rigid_compact} we do the same, but with $\clo{\F}$ and a continuous surjection $f\maps 2^ω \onto \clo{\F}$, i.e. $F := \set{\tuple{x, y} ∈ [0, 1]^ω × 2^ω: x ∈ f(y)}$, and we put $G := f\preim{\F}$.
			Clearly, we have $\set{F^b: b ∈ G} = \F$ and $\set{F^b: b ∈ 2^ω} = \clo{\F}$.
			
			The remaining implications are trivial.
		\end{proof}
	\end{theorem}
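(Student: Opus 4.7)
The plan is to establish the cycle by proving (ii) $\Rightarrow$ (i), (i) $\Rightarrow$ (iii), and (i) $\Rightarrow$ (iv); the remaining implications (iii) $\Rightarrow$ (ii) and (iv) $\Rightarrow$ (ii) then follow immediately by taking $A := [0,1]^ω$ with $B := ω^ω$ (resp.\ $B := G$ regarded as an analytic subspace of $2^ω$).

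For (ii) $\Rightarrow$ (i), the idea is to pull the given data back to a Polish indexing space and exploit the compactness of $A$ to upgrade the composition map to a closed one. Choose a continuous surjection $f\maps B' \onto B$ with $B'$ Polish and set $F' := (\id_A × f)\preim{F}$, which is closed in $A × B'$ as a continuous preimage of the closed set $F$. The rectangular composition $\A_{F'}(π_{B'}\restr{F'}\maps F' \to B')$ is a Polish composition of a family equivalent to $\set{F^b : b ∈ B} \homeo \C$ by Lemma~\ref{thm:analytic_pullback}. The decisive extra property is that the composition map is closed: because $A$ is compact, the projection $π_{B'}\maps A × B' \to B'$ is closed by the Kuratowski theorem \cite[Theorem~3.1.16]{Engelking}, and the restriction of a closed map to a closed subspace remains closed. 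Strong Polishability of $\C$ then follows from Proposition~\ref{thm:compactifiable_G_delta} together with its corollary, which allows a witnessing composition map to be merely closed rather than closed and open.

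For (i) $\Rightarrow$ (iii) and (i) $\Rightarrow$ (iv), I apply Theorem~\ref{thm:strongly_Polishable_characterization} to obtain a $G_δ$ disjoint family $\F ⊆ \Compacta{[0, 1]^ω}$ with $\F \homeo \C$, which we may assume is nonempty (the disjunctive escape clause handles $\C = ∅$). For (iii), $\F$ is Polish as a $G_δ$ subset of a Polish space, so pick a continuous surjection $f\maps ω^ω \onto \F$ and set $F := \set{\tuple{x, b} ∈ [0, 1]^ω × ω^ω: x ∈ f(b)}$; this is closed as the preimage under $\id × f$ of the closed membership relation $\R_∈ ∩ ([0,1]^ω × \Compacta{[0,1]^ω})$ (Observation~\ref{thm:membership_closed}), and $\set{F^b : b ∈ ω^ω} = \rng(f) = \F$ by construction. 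For (iv), replace $\F$ by the metrizable compactum $\clo{\F}$, take a continuous surjection $f\maps 2^ω \onto \clo{\F}$, define $F$ analogously, and put $G := f\preim{\F}$; then $G$ is $G_δ$ in $2^ω$ (continuous preimage of a $G_δ$), and by construction $\set{F^b : b ∈ G} = \F$ while $\set{F^b : b ∈ 2^ω} = \clo{\F}$, as required.

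I expect the main obstacle to be establishing the closedness of the composition map in the step (ii) $\Rightarrow$ (i). This is precisely what upgrades Polishability to \emph{strong} Polishability, and it depends essentially on the compactness of $A$ in the rectangular representation, via the Kuratowski projection theorem; without that hypothesis we would only recover Polishability. Everything else is packaging with the pullback construction, the hyperspace membership relation, and the previously established characterizations in Theorems~\ref{thm:compactifiable_characterization} and~\ref{thm:strongly_Polishable_characterization}.
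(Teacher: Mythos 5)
Your proposal is correct and follows essentially the same route as the paper: the pullback along a continuous surjection from a Polish space plus the Kuratowski projection theorem and Proposition~\ref{thm:compactifiable_G_delta} for \locimpl{rectangular}{composition}, and the $G_δ$ family from Theorem~\ref{thm:strongly_Polishable_characterization} with surjections from $ω^ω$ onto $\F$ and from $2^ω$ onto $\clo{\F}$ for the remaining implications. The only difference is that you spell out a few details the paper leaves implicit (closedness of $F$ via Observation~\ref{thm:membership_closed}, and the choice of $B$ in the trivial implications), which is fine.
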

	
	The last condition condition of the previous theorem is quite close to compactifiability.
	It is enough to modify the fibers $F^b$ for $b ∈ 2^ω \setminus G$, so they become spaces from the given class $\C$, while keeping the modified set $F' ⊆ [0, 1]^ω × 2^ω$ closed.
	
	\begin{theorem} \label{thm:fiber_fixing}
		Let $\tuple{X, d}$ be a metric compactum and for every $n ∈ ω$ let $\A_n$ be a finite covering of $X$ by closed sets of diameter $< 2^{-n}$.
		For every $F ∈ \Compacta{X}$ let $\A_n(F)$ denote the space $⋃\set{A ∈ \A_n: A ∩ F ≠ ∅}$.
		Every $G_δ$ family $\F ⊆ \Compacta{X}$ containing a copy of every space from $\set{\A_n(F): F ∈ \clo{\F}, n ∈ ω}$ is compactifiable.
		
		\begin{proof}
			If $\F = ∅$, the theorem holds.
			Otherwise, there is a continuous surjection $f\maps 2^ω \onto \clo{\F}$.
			The set $G := f\preim{\F}$ is $G_δ$ in $2^ω$, and so its complement can be written as a disjoint union $⋃_{n ∈ ω} K_n$ of compact sets.
			As before, we consider the pullback of the induced composition of $\clo{\F}$, i.e. the closed set $F := \set{\tuple{x, b} ∈ X × 2^ω: x ∈ f(b)}$.
			For every $n ∈ ω$ let $H_n := ⋃\set{\A_n(F^b) × \set{b}: b ∈ K_n} ⊆ X × K_n$, and let $F' := F ∪ ⋃_{n ∈ ω} H_n$.
			
			We need to prove that $F'$ is closed.
			Then it is clear that $F'$ induces a compact composition of $\F$ since $\set{(F')^b: b ∈ G} = \F$ and $(F')^b = \A_n(F^b)$ for $b ∈ K_n$.
			Every set $H_n$ is closed since it is equal to $⋃_{A ∈ \A_n} A × (f\preim{A^- ∩ \clo{\F}} ∩ K_n)$.
			Moreover, we have $H_n ⊆ N_{2^{-n}}(F)$ for a suitable metric on $X × 2^ω$.
			Altogether, $\clo{F'} = F ∪ ⋃_{n ∈ ω} H_n ∪ ⋂_{k ∈ ω} \clo{⋃_{n ≥ k} H_n}$, and the last term is below $⋂_{k ∈ ω} \clo{N_{2^{-k}}(F)} = F$.
		\end{proof}
	\end{theorem}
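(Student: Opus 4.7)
The plan is to start from the natural ``closure composition'' associated to $\overline{\F}$ and then surgically enlarge its bad fibers using the finite coverings $\A_n$. Assuming $\F \neq \emptyset$, the space $\overline{\F}$ is a nonempty metrizable compactum, so there is a continuous surjection $f\maps 2^\omega \onto \overline{\F}$. Define the closed set $F := \set{\tuple{x, b} \in X \times 2^\omega : x \in f(b)}$; its fibers range over all of $\overline{\F}$. The subset $G := f\preim{\F}$ is $G_\delta$ in $2^\omega$ (since $\F$ is $G_\delta$ in $\Compacta{X}$), so its complement admits a decomposition $2^\omega \setminus G = \bigsqcup_{n \in \omega} K_n$ into compact pieces. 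The fibers $F^b$ for $b \in G$ are already the elements of $\F$; the fibers over $b \in \bigsqcup_n K_n$ come from $\overline{\F} \setminus \F$ and must be corrected.

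The correction uses the hypothesis: for $b \in K_n$, replace $F^b$ by $\A_n(F^b)$, which is by assumption homeomorphic to some member of $\F$. Concretely, I would set $H_n := \bigcup\set{\A_n(F^b) \times \set{b} : b \in K_n}$ and take $F' := F \cup \bigcup_{n \in \omega} H_n$. Then $(F')^b = F^b \in \F$ for $b \in G$ and $(F')^b = \A_n(F^b)$ is a homeomorphic copy of some element of $\F$ for $b \in K_n$, so $\set{(F')^b : b \in 2^\omega} \homeo \F$. By Theorem~\ref{thm:compactifiable_characterization}, $\F$ is then compactifiable, provided $F'$ is closed in $X \times 2^\omega$.

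The main obstacle, and really the content of the proof, is showing that $F'$ is closed. I would establish two auxiliary facts. First, each $H_n$ is individually closed: since $\A_n$ is finite, one rewrites
\[
	H_n = \bigcup_{A \in \A_n} A \times \bigl(f\preim{A^- \cap \overline{\F}} \cap K_n\bigr),
\]
where $A^- = \set{C \in \Compacta{X} : C \cap A \neq \emptyset\}$ is closed in $\Compacta{X}$ because its complement is $(X \setminus A)^+$; hence each summand is closed. Second, $H_n \subseteq N_{2^{-n}}(F)$ in a suitable product metric on $X \times 2^\omega$: if $\tuple{x, b} \in H_n$, then $x$ lies in some $A \in \A_n$ with $A \cap F^b \neq \emptyset$, so any $y \in A \cap F^b$ gives a point $\tuple{y, b} \in F$ with $d(x, y) < \diam A < 2^{-n}$.

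Given these two facts, any accumulation point of $F'$ is either in $F$, in one of the closed sets $H_n$, or is a cluster point of $\bigcup_{n \geq k} H_n$ for every $k$. In the last case it belongs to $\bigcap_{k \in \omega} \clo{\bigcup_{n \geq k} H_n} \subseteq \bigcap_{k \in \omega} \clo{N_{2^{-k}}(F)} = F$, using that $F$ is closed. Hence $F' = \clo{F'}$. I expect the bookkeeping of this last limiting argument, together with a correct choice of product metric making the $2^{-n}$-shrinking precise, to be the only delicate step; everything else is a direct application of the earlier hyperspace machinery.
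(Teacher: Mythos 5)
Your proposal is correct and follows essentially the same route as the paper's proof: the same pullback $F$ of the composition of $\clo{\F}$ along $f\maps 2^ω \onto \clo{\F}$, the same decomposition $2^ω \setminus G = ⋃_{n} K_n$, the same correction sets $H_n$ with the identical rewriting $H_n = ⋃_{A ∈ \A_n} A × (f\preim{A^- ∩ \clo{\F}} ∩ K_n)$, and the same $N_{2^{-n}}(F)$-shrinking argument for closedness of $F'$.
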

	
	\begin{lemma} \label{thm:G_delta}
		Let $X$ be a Polish space such that $X × ω^ω$ embeds into $X$.
		Every analytic family $\F ⊆ \Compacta{X}$ is equivalent to a $G_δ$ family $\G ⊆ \Compacta{X}$.
		
		\begin{proof}
			There is a Polish space $Y ∈ \set{∅, ω^ω, ω^ω \topsum 1}$ and a continuous surjection $f\maps Y \onto \F$ such that $\card{f\fiber{∅}} ≤ 1$.
			As in the proof of Theorem~\ref{thm:strongly_Polishable_characterization}, the pullback $\A'(q\maps A' \to Y)$ of the composition $\A_\F$ along $f$ is a strong Polish composition of $\F$.
			Hence, the corresponding family of fibers $\F_{\A'} ⊆ \Compacta{A'}$ is $G_δ$ (Construction~\ref{con:to_hyperspace}).
			Since the composition space $A'$ is a subspace of $X × Y$, it embeds into $X × ω^ω$, and so into $X$.
			For an embedding $e\maps A' \into X$, the induced map $\ImageMap{e}\maps \Compacta{A'} \into \Compacta{X}$ is also an embedding by Proposition~\ref{thm:induced_maps}.
			Since $A'$ and $\Compacta{A'}$ are Polish spaces, $\ImageMap{e}\im{\F_{\A'}} ⊆ \Compacta{X}$ is the desired $G_δ$ family equivalent to $\F$.
		\end{proof}
	\end{lemma}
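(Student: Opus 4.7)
The plan is to realize $\F$ first as a $G_δ$ family inside the hyperspace of a suitably small auxiliary Polish space $A'$, and then transport that family into $\Compacta{X}$ using the embedding hypothesis. Since $\F ⊆ \Compacta{X}$ is analytic and $X$ is Polish, Theorem~\ref{thm:strongly_Polishable_characterization} tells us $\F$ is strongly Polishable. To exhibit a strong Polish composition explicitly I would start from a continuous surjection $f\maps ω^ω \onto \F$ (or $f\maps ω^ω \topsum 1 \onto \F$ if $∅ ∈ \F$, so that $\card{f\fiber{∅}} ≤ 1$) and form the pullback of the canonical composition $\A_\F$ of Construction~\ref{con:from_hyperspace} along $f$. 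By Lemma~\ref{thm:analytic_pullback} the resulting composition $\A'(q'\maps A' \to \dom(f))$ is Polish, and by Corollary~\ref{thm:hyperspace_compositions_are_strong} together with Corollary~\ref{thm:strong_pullback} it is strong.

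Applying Construction~\ref{con:to_hyperspace} to $\A'$ produces the fiber family $\F_{\A'} ⊆ \Compacta{A'}$, which is equivalent to $\F$ and is $G_δ$ in $\Compacta{A'}$ because $\A'$ is a strong Polish composition of compacta. By construction $A' ⊆ X × ω^ω$, and the hypothesis gives an embedding $X × ω^ω \into X$, which restricts to an embedding $e\maps A' \into X$. Proposition~\ref{thm:induced_maps} then lifts this to an embedding $\ImageMap{e}\maps \Compacta{A'} \into \Compacta{X}$, and the candidate is $\G := \ImageMap{e}\im{\F_{\A'}}$, which is tautologically equivalent to $\F$.

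The point that still requires care is that $\G$ is genuinely $G_δ$ in the ambient space $\Compacta{X}$, not merely relatively $G_δ$ inside $\ImageMap{e}\im{\Compacta{A'}}$. For this I would invoke Alexandrov's theorem: the Polish subspace $e\im{A'}$ is $G_δ$ in $X$, so $e\im{A'} = ⋂_n U_n$ for some open $U_n ⊆ X$, whence $\ImageMap{e}\im{\Compacta{A'}} = \set{K ∈ \Compacta{X} \given K ⊆ e\im{A'}} = ⋂_n U_n^+$ is $G_δ$ in $\Compacta{X}$. Being $G_δ$ in a $G_δ$ subset is the same as being $G_δ$ in the ambient space, so $\G$ is $G_δ$ in $\Compacta{X}$. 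The only real obstacle is the bookkeeping across the three layers $\Compacta{A'} \homeo \ImageMap{e}\im{\Compacta{A'}} ⊆ \Compacta{X}$; the substantive work has been carried out already in the earlier constructions and characterizations.
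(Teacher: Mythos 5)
Your proposal is correct and follows essentially the same route as the paper: pull back $\A_\F$ along a continuous surjection from $ω^ω$ (or $ω^ω \topsum 1$) to get a strong Polish composition, pass to the $G_δ$ fiber family in $\Compacta{A'}$, and transport it into $\Compacta{X}$ via the embedding hypothesis. The only cosmetic difference is in the last step, where the paper simply invokes that a Polish subspace of a Polish space is $G_δ$, while you spell out the same fact explicitly via $\ImageMap{e}\im{\Compacta{A'}} = \bigcap_n U_n^+$.
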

	
	\begin{corollary} \labelblock{thm:fiber_fixing_application}
		We have the following applications of the previous theorem.
		\begin{enumerate}
			\item Every analytic subset of $\Continua{[0, 1]^ω}$ containing a copy of every Peano continuum is compactifiable.
				In particular, the class of all Peano continua is compactifiable. \loclabel{Peano}
			\item Every analytic subset of $\Compacta{2^ω}$ containing a copy of $2^ω$ is compactifiable. \loclabel{Cantor}
			\item Every $G_δ$ subset of $\Continua{D_ω}$ containing a copy of $D_ω$ is compactifiable ($D_ω$ denotes the Ważewski's universal dendrite \cite[10.37]{Nadler}). \loclabel{Wazewski}
		\end{enumerate}
		
		\begin{proof}
			Let $X$ denote $[0, 1]^ω$ or $2^ω$ or $D_ω$, respectively.
			By Lemma~\ref{thm:G_delta} there is a $G_δ$ family $\F ⊆ \Compacta{X}$ equivalent to the original family.
			By Theorem~\ref{thm:fiber_fixing} it is enough to find suitable coverings $\A_n$ of $X$ such that every space from $\G := \set{\A_n(F): F ∈ \clo{\F}, n ∈ ω}$ is homeomorphic to a space from $\F$.
			We cover $X$ by its copies of sufficiently small diameters.
			In \locref{Peano} every space from $\G$ is a connected finite union Hilbert cubes, and so a Peano continuum.
			In \locref{Cantor} every space from $\G$ is a finite union of Cantor spaces, and so a Cantor space.
			In \locref{Wazewski} every space from $\G$ is a connected finite union of copies of $D_ω$ in $D_ω$, and so a copy of $D_ω$ if we choose the coverings so that for every $A ∈ ⋃_{n ∈ ω} \A_n$ all branching points of $D_ω$ in $A$ are in the interior of $A$.
		\end{proof}
	\end{corollary}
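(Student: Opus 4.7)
The plan is to apply Theorem~\ref{thm:fiber_fixing} in each case, after first reducing an analytic family to a $G_δ$ one via Lemma~\ref{thm:G_delta} in parts \locref{Peano} and \locref{Cantor}. For \locref{Peano} and \locref{Cantor} the hypothesis of Lemma~\ref{thm:G_delta} is easy: $ω^ω$ embeds into both $[0,1]^ω$ and $2^ω$, and $[0,1]^ω × [0,1]^ω \homeo [0,1]^ω$, $2^ω × 2^ω \homeo 2^ω$, so $X × ω^ω$ embeds into $X$ in both cases. In \locref{Wazewski} the family is assumed $G_δ$, so no reduction is needed.

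Next I would construct, in each case, a sequence of finite closed covers $\A_n$ of the ambient $X$ of mesh $< 2^{-n}$ whose members are especially nice. For \locref{Peano}, I would take $\A_n$ to consist of small subcubes of $[0,1]^ω$, each homeomorphic to the Hilbert cube. For \locref{Cantor}, I would take $\A_n$ to be the clopen partition of $2^ω$ by cylinders of length $n$, each a copy of $2^ω$. For \locref{Wazewski}, I would cover $D_ω$ by a finite family of small subdendrites, each homeomorphic to $D_ω$, chosen so that every branching point of $D_ω$ lying in a piece $A \in \A_n$ is in the topological interior of $A$; the existence of such a self-similar cover follows from the standard Fraïssé-type construction of $D_ω$.

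With such covers in hand, I would verify that for every $F \in \clo{\F}$ and every $n$, the space $\A_n(F) = ⋃\set{A ∈ \A_n : A ∩ F ≠ ∅}$ is a member of $\F$ up to homeomorphism, as required by Theorem~\ref{thm:fiber_fixing}. In \locref{Peano}, $\clo{\F} ⊆ \Continua{[0,1]^ω}$ so $F$ is connected; then $\A_n(F)$ is a finite union of Hilbert cubes all of which meet the connected set $F ⊆ \A_n(F)$, hence $\A_n(F)$ is a connected, locally connected metric continuum, i.e.\ a Peano continuum. In \locref{Cantor}, $\A_n(F)$ is a nonempty finite union of pieces of a clopen partition, so a clopen subset of $2^ω$, hence itself homeomorphic to $2^ω$. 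In \locref{Wazewski}, $\A_n(F)$ is a connected finite union of copies of $D_ω$ sitting inside the dendrite $D_ω$, so itself a dendrite; by the interior condition every branching point of $D_ω$ that lies in $\A_n(F)$ keeps a full $D_ω$-neighborhood inside $\A_n(F)$, so the set of points of order $ω$ is dense in $\A_n(F)$, and the classical characterization of the Ważewski dendrite gives $\A_n(F) \homeo D_ω$.

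The routine parts are the embedding checks and the identification of $\A_n(F)$ in cases \locref{Peano} and \locref{Cantor}; the main obstacle is case \locref{Wazewski}, because a finite union of copies of $D_ω$ is not automatically a $D_ω$-copy — endpoints of two pieces that are identified at a boundary point could produce a branching point of finite order in the union. The interior condition on the cover is precisely what prevents this, and arranging such covers (which requires using the self-similar structure of $D_ω$) is the only nontrivial geometric input in the proof.
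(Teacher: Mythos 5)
Your proposal follows exactly the paper's argument: reduce to a $G_δ$ family via Lemma~\ref{thm:G_delta} (unnecessary in the $D_ω$ case), then apply Theorem~\ref{thm:fiber_fixing} with finite closed covers of $X$ by small copies of itself, checking that each $\A_n(F)$ stays in the class — with the same interior-of-branch-points condition in the Ważewski case. The only difference is that you spell out the verification (e.g.\ the embedding hypothesis of Lemma~\ref{thm:G_delta} and the characterization of $D_ω$) in more detail than the paper does; the proof is correct.
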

	
	In Theorem~\ref{thm:strongly_Polishable_rectangular_characterization} we have characterized strong Polishability in the language of rectangular compositions to make a connection with compactifiability.
	Now we characterize compactifiability using families in hyperspaces to make a connection with strong compactifiability.
	
	\begin{theorem} \labelblock{thm:compactifiable_hyperspace_characterization}
		The following conditions are equivalent for a class $\C$ of topological spaces.
		\begin{enumerate}
			\item $\C$ is compactifiable. \loclabel{compactifiable}
			\item There is a metrizable compactum $X$ and a family $\F ⊆ \Compacta{X}$ such that $\F \homeo \C$ and $\tuple{\F, τ}$ is a metrizable compactum for a topology $τ ⊇ τ_V^+$. \loclabel{weak_family}
			\item There is a $G_δ$ disjoint family $\F ⊆ \Compacta{[0, 1]^ω}$ such that $\F \homeo \C$ and $\tuple{\F, τ_V^+}$ is a zero-dimensional metrizable compactum. \loclabel{family}
		\end{enumerate}
		
		\begin{proof}
			For \locimpl{weak_family}{compactifiable} we use Construction~\ref{con:from_hyperspace} on $\tuple{\F, τ}$.
				We obtain a composition of $\C$ that is compact by Observation~\ref{thm:membership_closed}.
			
			\locimpl{compactifiable}{family}.
				Let $\A(q\maps A \to B)$ be a compact composition of $\C$.
				We may suppose that $B$ is zero-dimensional by Theorem~\ref{thm:compactifiable_characterization}, that $\card{B \setminus \rng(q)} ≤ 1$ by Observation~\ref{thm:emptyset}, and that $A ⊆ [0, 1]^ω$.
				The family $\F_\A$ obtained by Construction~\ref{con:to_hyperspace} is disjoint and by Proposition~\ref{thm:compactifiable_G_delta} $G_δ$ in $\Compacta{A} ⊆ \Compacta{[0, 1]^ω}$.
				Since $\card{B \setminus \rng(q)} ≤ 1$ and the map $q$ is closed, $\FiberMap{q}\maps B \to \tuple{\F, τ_V^+}$ is a homeomorphism.
			
			\locimpl{family}{weak_family} is trivial.
		\end{proof}
	\end{theorem}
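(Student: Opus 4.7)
My plan is to close the cycle (iii)\,$\Longrightarrow$\,(ii)\,$\Longrightarrow$\,(i)\,$\Longrightarrow$\,(iii). The implication (iii)\,$\Longrightarrow$\,(ii) is immediate: simply take $X = [0, 1]^ω$ and $τ = τ_V^+$ in (ii). The two remaining directions are handled by the two hyperspace constructions from Section~\ref{sec:hyperspaces}.

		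For (ii)\,$\Longrightarrow$\,(i), I would apply Construction~\ref{con:from_hyperspace} to the compactum $\tuple{\F, τ}$, producing the composition $\A_\F(q\maps A_\F \to \F)$ with $A_\F = \set{\tuple{x, F} \given x ∈ F ∈ \F} ⊆ X × \F$. The product $X × \tuple{\F, τ}$ is a metrizable compactum, so it suffices to show that $A_\F$ is closed inside it. This is exactly the content of Observation~\ref{thm:membership_closed}: the membership relation is closed in $X × \tuple{\Closed{X}, τ_V^+}$ by regularity of $X$, and since $τ ⊇ τ_V^+$ the identity $\tuple{\F, τ} \to \tuple{\F, τ_V^+}$ is continuous, so $A_\F$ arises as the continuous preimage of a closed set.

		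For (i)\,$\Longrightarrow$\,(iii), I would start from any compact composition $\A(q\maps A \to B)$ of $\C$ and normalize it: by Theorem~\ref{thm:compactifiable_characterization} I may take $B$ zero-dimensional, by Observation~\ref{thm:emptyset} I may arrange $\card{B \setminus \rng(q)} ≤ 1$, and by universality of the Hilbert cube I may assume $A ⊆ [0, 1]^ω$. Construction~\ref{con:to_hyperspace} then yields a disjoint family $\F_\A ⊆ \Compacta{A}$, which is $G_δ$ in $\Compacta{A}$ by Proposition~\ref{thm:compactifiable_G_delta}; since $A$ is closed in $[0, 1]^ω$, the image map of the closed embedding $A \into [0, 1]^ω$ gives a closed embedding $\Compacta{A} \into \Compacta{[0, 1]^ω}$ by Proposition~\ref{thm:induced_maps}, so $\F_\A$ is also $G_δ$ in $\Compacta{[0, 1]^ω}$. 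The remaining step is to identify $\tuple{\F_\A, τ_V^+}$ with $B$: the fiber map $\FiberMap{q}\maps B \to \tuple{\F_\A, τ_V^+}$ is $τ_V^+$-continuous by Proposition~\ref{thm:induced_maps} (since $q$ is closed) and bijective by the normalization $\card{B \setminus \rng(q)} ≤ 1$, so it is a continuous bijection from a compact Hausdorff space onto its image.

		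The hard part will be precisely this last identification. The topology $τ_V^+$ on $\Compacta{X}$ is in general not even $T_1$ — every nonempty $τ_V^+$-open set contains $∅$ — so Hausdorffness of $\tuple{\F_\A, τ_V^+}$ is not automatic, and only then does the continuous bijection from compact $B$ upgrade to a homeomorphism. I plan to extract Hausdorffness from disjointness of $\F_\A$: two distinct nonempty fibers $q\fiber{b_1}$, $q\fiber{b_2}$ are disjoint compacta in $A$, hence can be separated by disjoint open sets $U_1, U_2 ⊆ A$, and then $U_1^+, U_2^+$ separate them in $τ_V^+$ (their intersection meets $\F_\A$ only in $∅$). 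The possible single empty fiber under $\card{B \setminus \rng(q)} = 1$ is handled separately, since $\set{∅} = ∅^+$ is $τ_V^+$-open and corresponds to the isolated point added via Observation~\ref{thm:emptyset}.
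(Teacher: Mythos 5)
Your proposal follows the paper's proof exactly: the same cycle of implications, Construction~\ref{con:from_hyperspace} with Observation~\ref{thm:membership_closed} for \locref{weak_family}~$\Longrightarrow$~\locref{compactifiable}, and the same normalizations plus Construction~\ref{con:to_hyperspace} and Proposition~\ref{thm:compactifiable_G_delta} for \locref{compactifiable}~$\Longrightarrow$~\locref{family}. You in fact supply a detail the paper leaves implicit, namely why $\tuple{\F_\A, τ_V^+}$ is Hausdorff (so that the continuous bijection $\FiberMap{q}$ from the compact $B$ is a homeomorphism), and your separation argument via disjoint open $U_1, U_2 ⊆ A$ is correct whenever $∅ ∉ \F_\A$. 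One caveat: your treatment of the single empty fiber does not work as stated — $\set{∅}$ being $τ_V^+$-open is not enough, since \emph{every} nonempty $τ_V^+$-open set contains $∅$, so $∅$ cannot be Hausdorff-separated from a nonempty member of $\F_\A$; but this degenerate case ($∅ ∈ \C$ with $\C ≠ \set{∅}$) is equally unaddressed by the paper's own one-line claim that $\FiberMap{q}$ is a homeomorphism onto a metrizable compactum, so it is not a defect specific to your argument.
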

	
	\begin{question}
		Is there a similar characterization for Polishable classes?
	\end{question}
	
	Figure~\ref{fig:implications} summarizes the implications between composition-related properties and descriptive complexity of the corresponding subsets of the space of all metrizable compacta $\Compacta{[0, 1]^ω}$.
	The left part and the right part follow from the characterization theorems: \ref{thm:compactifiable_characterization}, \ref{thm:Polishable_characterization}, \ref{thm:strongly_compactifiable_characterization}, \ref{thm:strongly_Polishable_characterization}.
	The implication “compactifiable $\implies$ $G_δ$” follows from Proposition~\ref{thm:compactifiable_G_delta}.
	As a byproduct, we obtain the dashed implications.
	
	\begin{figure}[!ht] 
		\centering
		\linespread{1}
		
		\begin{tikzpicture}[
				x = {(6.5em, 0)}, 
				y = {(0, -5em)}, 
				multiline/.style = {align=center}, 
			]
			
			\begin{scope}[multiline, font=\strut]
				\node at (0, 0) (strongly_compactifiable) {strongly \\ compactifiable};
				\node at (1.7, 0) (compactifiable) {compactifiable};
				\node at (3, 0) (strongly_Polishable) {strongly \\ Polishable};
				\node at (4.7, 0) (Polishable) {Polishable};
				\node at (0, 1) (closed) {closed};
				\node at (1, 1) (F_sigma) {$F_σ$};
				\node at (3, 1) (G_delta) {$G_δ$};
				\node at (4, 1) (analytic) {analytic};
			\end{scope}
			
			\graph{
				(compactifiable) <- (strongly_compactifiable) <-> (closed) -> (F_sigma) -> (compactifiable),
				(Polishable) <- (strongly_Polishable) <-> (G_delta) <-> (analytic) -> (Polishable),
				(compactifiable) -> (G_delta),
				(compactifiable) ->[dashed] (strongly_Polishable), 
				(F_sigma) ->[dashed] (G_delta),
			};
			
			\begin{scope}[
					brace/.style = {decorate, decoration={brace, amplitude=1.2ex}},
					label/.style = {midway, font=\small},
				]
				\coordinate (start) at ([yshift=1.5ex] strongly_compactifiable.north);
				\draw[brace]
					(start) -- (Polishable |- start)
					node[label, above=0.8ex] {classes of compacta};
				
				\coordinate (start) at ([yshift=-1.8ex] closed.south);
				\draw[brace, decoration=mirror]
					(start) -- (analytic |- start)
					node[label, below=0.9ex] {existence of equivalent subsets of $\Compacta{[0, 1]^ω}$};
			\end{scope}
			
		\end{tikzpicture}
		
		\caption{Implications between the considered classes.}
		\label{fig:implications}
	\end{figure}
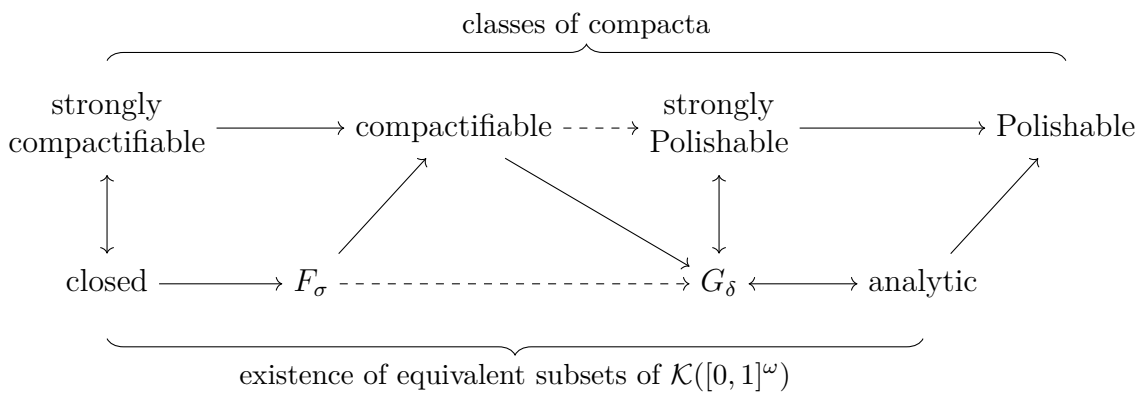
	
	\begin{question} \label{q:distinguish}
		We do not know which implications can be reversed. Namely, we have the following questions.
		\begin{enumerate}
			\item Is there a compactifiable class that is not strongly compactifiable?
			\item Is there a strongly Polishable class that is not compactifiable?
			\item Is there a Polishable class that is not strongly Polishable?
		\end{enumerate}
	\end{question}
	
	To summarize, this chapter relates (strong) compactifiability or Polishability of a class of metrizable compacta to the lowest complexity of its realizations in the hyperspace $\Compacta{[0, 1]^ω}$.
	This complexity in $\Compacta{[0, 1]^ω}$ up to the equivalence is studied in \cite{Bartos} by the first author.

	\subsection{The Wijsman hypertopologies}
	
	So far we have considered mostly the hyperspace of all compact subsets $\Compacta{X}$ endowed with the Vietoris topology (or equivalently Hausdorff metric topology for metrizable $X$). There we have the one-to-one correspondence between subsets of the hyperspace and strong compositions (Construction~\ref{con:from_hyperspace} and \ref{con:to_hyperspace}).
	On the other hand, we are limited to Polishable classes of compact rather than Polish spaces.
	
	For a Polish space $X$ we would like $\Closed{X}$ to be Polish as well, but the Vietoris topology on $\Closed{X}$ is not metrizable unless $X$ is compact, and the Hausdorff metric topology is not separable unless $X$ is compact. That is why we will also consider so-called \emph{Wijsman topology}. The Wijsman topology induced by the metric $d$ is the one projectively generated by the family $\set{d(x, ⋅)\maps \Closed{X} \to ℝ}_{x ∈ X}$. It was shown in \cite{Beer} that $\Closed{X}$ with the Wijsman topology induced by a complete metric is a Polish space for a Polish space $X$.
	Usually the Wijsman topology is defined only on $\Closed{X} \setminus \set{∅}$, and is then extended to $\Closed{X}$ in a way related to the one-point compactification. For our purposes we may use the projectively generating definition directly to $\Closed{X}$, which results in $\set{∅}$ being clopen.
	
	The Wijsman topology is coarser than both Vietoris and Hausdorff metric topology, and in general they are not equal even on $\Compacta{X}$. In general, $\Compacta{X}$ is an $F_{σδ}$-subspace of $\Closed{X}$ with respect to the Wijsman topology, but it is not necessarily $G_δ$ \cite{Beer}.
	Given a metric $d$ on $X$ we may identify a set $A ∈ \Closed{X}$ with the function $d(⋅, A)\maps X \to ℝ$. Therefore, the Wijsman topology is inherited from the space of all continuous functions $\Cont(X, ℝ)$ with the topology of pointwise convergence. On the other hand, $d_H(A, B) = \sup_{x ∈ X} \abs{d(x, A) - d(x, B)}$, so the Hausdorff metric topology is inherited from $\Cont(X, ℝ)$ with the topology of uniform convergence.
	
	The Observation~\ref{thm:membership_closed} holds also for the Wijsman topologies.
	
	\begin{observation}
		If $X$ is a metrizable space and $\Closed{X}$ is endowed with a Wijsman topology, then $\R_∈ ∩ (X × \Closed{X})$ is closed in $X × \Closed{X}$.
		
		\begin{proof}
			If $F ∈ \Closed{X}$ and $x ∈ X \setminus F$, then $r := d(x, F) > 0$. We put $U = \set{y ∈ X: d(x, y) < \frac{r}{2}}$ and $\V = \set{H ∈ \Closed{X}: d(x, H) > \frac{r}{2}}$, so $U × \V$ is a neighborhood of $\tuple{x, F}$ disjoint with $\R_∈$.
		\end{proof}
	\end{observation}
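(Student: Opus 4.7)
The plan is to prove that the complement of $\R_\in \cap (X \times \Closed{X})$ is open in the product. Fix a pair $(x, F)$ with $F$ closed and $x \notin F$; since $F$ is closed in the metric space $X$, the number $r := d(x, F)$ is strictly positive, and I would use this $r$ as the quantitative foothold to separate $(x, F)$ from the membership relation.

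The key property of the Wijsman topology that I would exploit is precisely its defining feature: it is projectively generated by the family $\{d(y, \cdot) \maps \Closed{X} \to \RR\}_{y \in X}$, so every point-to-set distance function is continuous on $\Closed{X}$. In particular, the set $\V := \{H \in \Closed{X} \given d(x, H) > r/2\}$ is Wijsman-open and contains $F$. Together with the metric ball $U := \{y \in X \given d(x, y) < r/2\}$ around $x$, this gives an open product neighborhood $U \times \V$ of $(x, F)$.

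To finish I would verify that $U \times \V$ is disjoint from $\R_\in$: if $(y, H) \in U \times \V$ satisfied $y \in H$, then the point-to-set triangle inequality would give $d(x, H) \leq d(x, y) + d(y, H) = d(x, y) < r/2$, contradicting $H \in \V$. Hence $y \notin H$, as required.

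I do not anticipate any real obstacle here. The argument is a direct analogue of Observation~\ref{thm:membership_closed} for regular spaces, but with the regularity-supplied open separator $V \supseteq F$ replaced by the Wijsman-open set $\V$ carved out directly by the continuous functional $d(x, \cdot)$; the triangle inequality for point-to-set distance then does all the work.
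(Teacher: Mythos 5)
Your proposal is correct and is essentially identical to the paper's own proof: the same $r = d(x,F)$, the same $U$ and $\V$, with the Wijsman-openness of $\V$ coming from the projectively generating functional $d(x,\cdot)$. You merely spell out the triangle-inequality verification of disjointness that the paper leaves implicit.
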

	
	It follows that we may use Construction~\ref{con:from_hyperspace} also for Wijsman hyperspaces to obtain Polish compositions.
	The following proposition extends Proposition~\ref{thm:family}.
	
	\begin{proposition}
		If $X$ is a Polish space and $\Closed{X}$ is endowed with the Wijsman topology induced by a complete metric, then every analytic subset of $\Closed{X}$ is a Polishable class of Polish spaces.
	\end{proposition}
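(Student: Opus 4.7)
The plan is to mimic the proof of Proposition~\ref{thm:family}, with the Wijsman observation established just above playing the role of Observation~\ref{thm:membership_closed}. Since $X$ is Polish and the Wijsman topology on $\Closed{X}$ is induced by a \emph{complete} metric, $\Closed{X}$ is itself Polish by the cited result of Beer, so an analytic subset $\F \subseteq \Closed{X}$ genuinely qualifies as an analytic space in the sense used throughout the paper.

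Next I would apply Construction~\ref{con:from_hyperspace} to $\F$ to form $A_\F = \R_\in \cap (X \times \F) \subseteq X \times \F$ together with the projection $q := π_\F\restr{A_\F}\maps A_\F \to \F$, whose fiber over any $F \in \F$ is literally $F$. The preceding observation says that $\R_\in \cap (X \times \Closed{X})$ is closed in $X \times \Closed{X}$, so intersecting with $X \times \F$ shows that $A_\F$ is closed --- and hence in particular $G_δ$ --- in $X \times \F$, where $X$ is Polish and $\F$ is analytic. This is exactly condition~\itemonlyref{thm:Polishable_characterization}{rectangular} of Theorem~\ref{thm:Polishable_characterization}, so the implication \itemonlyref{thm:Polishable_characterization}{rectangular}~$\Longrightarrow$~\itemonlyref{thm:Polishable_characterization}{composition} delivers a Polish composition of $\set{F : F \in \F} = \F$. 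For the ``class of Polish spaces'' strengthening, each $F \in \F$ is a closed subspace of the Polish space $X$ and is therefore Polish.

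I do not foresee a real obstacle: the content of the argument is that Beer's theorem together with the new Wijsman closed-membership observation place $\Closed{X}$ on exactly the footing that $\Compacta{X}$ enjoyed in Proposition~\ref{thm:family}, after which the rectangular characterization of Polishability does the rest. The only mild care required is to confirm that the fibers of $q$ are precisely the members of $\F$ rather than merely homeomorphic copies of them, but this is immediate from the defining formula $A_\F = \set{\tuple{x, F} : x \in F \in \F}$.
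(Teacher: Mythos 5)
Your argument is correct and is exactly the route the paper intends: it leaves this proposition without an explicit proof precisely because, as you say, Beer's theorem plus the Wijsman version of the closed-membership observation put $\Closed{X}$ in the same position that $\Compacta{X}$ occupied in Proposition~\ref{thm:family}, after which Construction~\ref{con:from_hyperspace} and condition \itemonlyref{thm:Polishable_characterization}{rectangular} of Theorem~\ref{thm:Polishable_characterization} finish the job. Your closing remarks (the fibers being literally the members of $\F$, and each $F$ being Polish as a closed subspace of $X$) are the right details to check and are handled correctly.
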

	
	\begin{remark}
		Since every Polish space can be embedded as a closed subspace to $(0, 1)^ω$, the hyperspace $\Closed{(0, 1)^ω}$ endowed with the Wijsman topology induced by a complete metric may be viewed as a Polish space of all Polish spaces.
	\end{remark}
	
	\begin{question}
		Let $\C$ be a Polishable class and let $\Closed{(0, 1)^ω}$ be endowed with a Wijsman topology induced by a complete metric.
		Does there exist an analytic (or even $G_δ$ or closed) family $\F ⊆ \Closed{(0, 1)^ω}$ such that $\F \homeo \C$?
	\end{question}
	
	%\begin{question}
		%Can more of the compactifiability theory from Vietoris hyperspaces be done in the setting of $\Closed{X}$ or $\Closed{(0, 1)^ω}$ with a Wijsman topology?
	%\end{question}

\section{Induced classes} \label{sec:induced}
	
	In this section we shall analyze how the properties of being compactifiable and Polishable are preserved under various modifications and constructions of induced classes.
	
	\begin{proposition} \label{thm:countable_union}
		Strongly compactifiable, compactifiable, strongly Polishable, and Polishable classes are stable under countable unions.
		
		\begin{proof}
			For compactifiable and Polishable classes this is Observation~\ref{thm:countable_union_of_compositions}.
			Let $\C_n$, $n ∈ ω$, be strongly Polishable classes.
			By Theorem~\ref{thm:strongly_Polishable_characterization} each of them is equivalent to an analytic family $\F_n ⊆ \Compacta{[0, 1]^ω}$.
			We have $⋃_{n ∈ ω} \C_n \homeo ⋃_{n ∈ ω} \F_n$, which is also analytic and hence strongly Polishable.
			In the strongly compactifiable case we proceed analogously, but end up with an $F_σ$ family $⋃_{n ∈ ω} \F_n$.
			The conclusion follows from the non-trivial fact, that every $F_σ$ family in $\Compacta{[0, 1]^ω}$ is equivalent to a closed family, and hence is strongly compactifiable \complexityFsigma.
		\end{proof}
	\end{proposition}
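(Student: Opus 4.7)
The plan is to handle the four properties in pairs, using the hyperspace characterizations already established. For compactifiable and Polishable classes, I would simply invoke Observation~\ref{thm:countable_union_of_compositions}, which was proved by taking the sum composition $\TopSum_{i \in I} \A_i$ (forming a one-point compactification in the compact case when the index set is infinite). So nothing new is needed there.

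For strongly Polishable classes, I would apply Theorem~\ref{thm:strongly_Polishable_characterization} to replace each class $\C_n$ by an equivalent analytic family $\F_n \subseteq \Compacta{[0,1]^\omega}$. Then $\bigcup_{n \in \omega} \C_n \homeo \bigcup_{n \in \omega} \F_n$, and the latter is analytic as a countable union of analytic subsets of the Polish space $\Compacta{[0,1]^\omega}$. By the same characterization theorem, this establishes that $\bigcup_n \C_n$ is strongly Polishable.

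For strongly compactifiable classes, I would proceed analogously using Theorem~\ref{thm:strongly_compactifiable_characterization} to get closed families $\F_n \subseteq \Compacta{[0,1]^\omega}$ equivalent to each $\C_n$. The union $\bigcup_n \F_n$ is then an $F_\sigma$ family equivalent to $\bigcup_n \C_n$. This is the main obstacle: a countable union of closed sets is only $F_\sigma$, not necessarily closed, so the characterization does not immediately close the loop. The way out is to cite the nontrivial fact \complexityFsigma{} that every $F_\sigma$ family in $\Compacta{[0,1]^\omega}$ is equivalent (up to homeomorphic copies of its members) to a closed family, which in turn yields strong compactifiability by Theorem~\ref{thm:strongly_compactifiable_characterization}. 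This external input is really where the content lies, since without it one would only conclude that countable unions of strongly compactifiable classes are $F_\sigma$-representable in the hyperspace, a priori strictly weaker than being strongly compactifiable.
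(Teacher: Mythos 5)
Your proposal is correct and follows essentially the same route as the paper: Observation~\ref{thm:countable_union_of_compositions} for the compactifiable and Polishable cases, the hyperspace characterizations for the strong cases, and the cited external fact that every $F_\sigma$ family in $\Compacta{[0, 1]^ω}$ is equivalent to a closed one. You also correctly identify that last citation as the only nontrivial input in the strongly compactifiable case.
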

	
	\begin{remark}
		In the previous proof we have used the fact that $⋃_{i ∈ I} \C_i \homeo ⋃_{i ∈ I} \D_i$ for every collection of equivalent classes $\C_i \homeo \D_i$, $i ∈ I$.
		However, it is not necessary that even $\C_i ∩ \C_j \homeo \D_i ∩ \D_j$, so we cannot use the same argument for proving preservation under intersections – compare with Proposition~\ref{thm:strongly_Polishable_intersection}.
	\end{remark}
	
	\begin{observation} \label{thm:diameter}
		Let $X$ be a metric space.
		The map $\diam\maps \powset{X} \to [0, ∞)$ is both $\tuple{τ_V^+, τ_U}$- and $\tuple{τ_V^-, τ_L}$-continuous, where $τ_U$ and $τ_L$ are the upper and lower semi-continuous topologies on $[0, ∞)$.
		It follows that $\diam$ is continuous.
		
		\begin{proof}
			If $\diam(A) < r$, then there is $ε > 0$ such that $\diam(N_ε(A)) < r$.
			Hence, $\diam(A') < r$ for every $A' ∈ N_ε(A)^+$.
			If $\diam(A) > r$, then there are points $x, y ∈ A$ and $ε > 0$ such that $d(x, y) ≥ r + 2ε$.
			Hence, $\diam(A') > r$ for every $A' ∈ B(x, ε)^- ∩ B(y, ε)^-$.
		\end{proof}
	\end{observation}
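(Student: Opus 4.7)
The plan is to prove the two semi-continuity statements separately by exhibiting, for each subbasic open set of $\tau_U$ or $\tau_L$, an explicit neighborhood in the corresponding Vietoris topology. Recall that $\tau_U$ (resp.\ $\tau_L$) on $[0,\infty)$ has subbase $\set{[0, r) \given r > 0}$ (resp.\ $\set{(r, \infty) \given r \geq 0}$). Once both halves are established, the joint continuity follows because $\tau_V = \tau_V^+ \vee \tau_V^-$ and the usual topology on $[0, \infty)$ is $\tau_U \vee \tau_L$, so the final sentence of the observation is automatic.

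For $\tuple{\tau_V^+, \tau_U}$-continuity I would fix $A \subseteq X$ with $\diam(A) < r$ and build a $\tau_V^+$-neighborhood $U^+$ of $A$ whose members all have diameter below $r$. The natural choice is to pick $\varepsilon > 0$ with $\diam(A) + 2\varepsilon < r$ and set $U := N_\varepsilon(A)$. A direct triangle-inequality computation gives $\diam(N_\varepsilon(A)) \leq \diam(A) + 2\varepsilon$: any two points $u, v \in N_\varepsilon(A)$ admit $a, b \in A$ with $d(u, a), d(v, b) < \varepsilon$, hence $d(u, v) < 2\varepsilon + d(a, b) \leq 2\varepsilon + \diam(A)$. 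Since $A' \in U^+$ means $A' \subseteq U$, we get $\diam(A') \leq \diam(U) < r$ as required.

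For $\tuple{\tau_V^-, \tau_L}$-continuity I would fix $A$ with $\diam(A) > r$. By definition of diameter, I can pick points $x, y \in A$ and $\varepsilon > 0$ such that $d(x, y) \geq r + 2\varepsilon$. The intersection $B(x, \varepsilon)^- \cap B(y, \varepsilon)^-$ is a $\tau_V^-$-neighborhood of $A$ (it meets both balls, via $x$ and $y$), and for any $A'$ in it there exist $x' \in A' \cap B(x, \varepsilon)$ and $y' \in A' \cap B(y, \varepsilon)$; then the reverse triangle inequality $d(x', y') \geq d(x, y) - 2\varepsilon \geq r$ gives $\diam(A') \geq r$, and in fact $> r$ with slightly more care (e.g.\ choose the strict inequality $d(x, y) > r + 2\varepsilon$).

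There is no real obstacle here — the whole argument is a couple of applications of the triangle inequality combined with unpacking the definitions of $\tau_V^+$ and $\tau_V^-$. The only subtlety to watch is the degenerate cases $A = \emptyset$ or $A$ a singleton in the first part (where $\diam(A) = 0$, but the neighborhood $N_\varepsilon(A)^+$ still works and even contains $\emptyset$); and in the second part the case $\diam(A) > r$ already forces $A$ to have at least two distinct points, so the choice of $x, y$ is unproblematic.
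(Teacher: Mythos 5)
Your proposal is correct and follows essentially the same argument as the paper: the $\tau_V^+$-half via $N_\varepsilon(A)^+$ with $\diam(N_\varepsilon(A)) \leq \diam(A) + 2\varepsilon$, and the $\tau_V^-$-half via $B(x,\varepsilon)^- \cap B(y,\varepsilon)^-$; you merely spell out the triangle-inequality details the paper leaves implicit. (The strictness worry in the second half is unnecessary, since the balls are open one gets $d(x',y') > d(x,y) - 2\varepsilon \geq r$ directly.)
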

	
	\begin{corollary}
		Let $\A(q\maps A \to B)$ be a compact composition of a family $\tuple{A_b}_{b ∈ B}$.
		For every $ε > 0$ the set $B_ε := \set{b ∈ B: \diam(A_b) ≥ ε}$ is closed, and the set $B_0 := \set{b ∈ B: \diam(A_b) > 0}$ is $F_σ$.
		It follows that the corresponding families of spaces are also compactifiable.
		
		\begin{proof}
			The map $({\diam} ∘ \FiberMap{q})\maps B \to [0, ∞)$ is upper semi-continuous since $\FiberMap{q}$ is $τ_V^+$-continuous and $\diam$ is $\tuple{τ_V^+, τ_U}$-continuous by Observation~\ref{thm:diameter}.
			Note that the intervals $[ε, ∞)$ are $τ_U$-closed, and so the interval $(0, ∞)$ is $τ_U$-$F_σ$.
		\end{proof}
	\end{corollary}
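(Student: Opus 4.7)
The plan is to reduce both assertions to a single statement: that the function $\varphi\maps B \to [0, \infty)$ defined by $b \mapsto \diam(A_b)$ is upper semi-continuous. Once this is in hand, $B_\varepsilon = \varphi\preim{[\varepsilon, \infty)}$ is closed and $B_0 = \bigcup_{n \in \omega} B_{1/n}$ is $F_\sigma$ directly from the definition of upper semi-continuity (equivalently, of $\tau_U$-continuity). Compactifiability of the two corresponding families of fibers then follows immediately from Corollary~\ref{thm:subcomposition}, which is tailored precisely to passing from a compact composition to the subfamily indexed by a closed or $F_\sigma$ subset of $B$.

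To establish the upper semi-continuity of $\varphi$, I would factor it as $\diam \circ \FiberMap{q}$. Because $A$ is a metrizable compactum and $B$ is Hausdorff, the composition map $q\maps A \to B$ is automatically closed, so Proposition~\ref{thm:induced_maps} gives that the fiber map $\FiberMap{q}\maps B \to \Compacta{A}$ is $\tau_V^+$-continuous. Observation~\ref{thm:diameter} provides that $\diam\maps \Compacta{A} \to [0, \infty)$ is $\langle \tau_V^+, \tau_U \rangle$-continuous, and composing the two yields $\tau_U$-continuity of $\varphi$, as desired.

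I do not anticipate any real obstacle; the proof is essentially bookkeeping on top of Observation~\ref{thm:diameter} and Proposition~\ref{thm:induced_maps}. The only minor point to keep in mind is that $\tau_U$ on $[0, \infty)$ is generated by the rays $[0, r)$, so that each $[\varepsilon, \infty)$ is $\tau_U$-closed and $(0, \infty) = \bigcup_{n \in \omega} [1/n, \infty)$ is $\tau_U$-$F_\sigma$; this is precisely what translates $\tau_U$-continuity of $\varphi$ into the claimed descriptive complexities of $B_\varepsilon$ and $B_0$, and hence, via Corollary~\ref{thm:subcomposition}, into compactifiability of the induced families.
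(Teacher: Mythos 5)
Your proposal is correct and follows essentially the same route as the paper: both factor $b \mapsto \diam(A_b)$ as $\diam \circ \FiberMap{q}$, obtain $\tau_V^+$-continuity of $\FiberMap{q}$ from closedness of $q$ (automatic for a compact composition) via Proposition~\ref{thm:induced_maps}, apply Observation~\ref{thm:diameter}, and read off the complexity of $B_\varepsilon$ and $B_0$ from the $\tau_U$-topology, with Corollary~\ref{thm:subcomposition} giving compactifiability. No gaps.
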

	
	In definitions of many natural classes of compacta, degenerate spaces are occasionally included, resp. excluded.
	The following proposition shows that with respect to compactifiability, it does not matter.
	
	\begin{proposition} \label{thm:nondegenerate}
		If a class $\C$ of metrizable compacta is strongly compactifiable, compactifiable, strongly Polishable, or Polishable, then so are the classes $\C ∪ \set{∅}$, $\C \setminus \set{∅}$, $\C ∪ \set{1}$, and $\C_{>1}$, where $1$ denotes a one-point space and $\C_{>1}$ denotes the class of all nondegenerate members of $\C$.
		
		\begin{proof}
			The additive cases $\C ∪ \set{∅}$ and $\C ∪ \set{1}$ follow directly from Proposition~\ref{thm:countable_union}.
			The case $\C \setminus \set{∅}$ for compactifiable and Polishable classes is covered by Observation~\ref{thm:emptyset}.
			For strongly compactifiable and Polishable classes, it is easy since $\set{∅}$ is clopen in $\Compacta{[0, 1]^ω}$, and so removing it from a realization of $\C$ does not change its complexity.
			Similarly, we obtain the $\C_{>1}$ case since the degenerate sets form a closed subset of the hyperspace.
			Hence, removing degenerate spaces from a realization of $\C$ preserves the $G_δ$ complexity and turns a closed family to an $F_σ$ family (since the hyperspace is metrizable), which is enough for $\C_{>1}$ to be strongly compactifiable by Proposition~\ref{thm:countable_union}.
			
			It remains to cover the $\C_{>1}$ case for compactifiable and Polishable $\C$.
			Let $\A(q\maps A \to B)$ be a composition of $\C$ and let $C := \set{b ∈ B: \card{q\fiber{b}} > 1}$.
			On one hand, if $A$ is a metric space, then $C$ is the preimage $(\FiberMap{q})\preim{\G}$ of the family $\G := \set{K ∈ \Compacta{A}: \diam(K) > 0}$, which is $τ_V^+$-open by Observation~\ref{thm:diameter}.
			Hence, if $\A$ is a compact composition, then $q$ is closed, $\FiberMap{q}$ is $τ_V^+$-continuous, and $C$ is open and, in particular, $F_σ$, and so $\C_{>1}$ is compactifiable.
			On the other hand, $C$ is the projection of the set $\set{\tuple{a, a', b} ∈ A × A × B: q(a) = b = q(a'),\ a ≠ a'}$, which is the intersection of a closed set and an open set.
			Hence, if $\A$ is a Polish composition, then $C$ is analytic, and so $\C_{>1}$ is Polishable.
		\end{proof}
	\end{proposition}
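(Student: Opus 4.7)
The plan is to treat the four modifications in order of difficulty. The two additive cases $\C \cup \{\emptyset\}$ and $\C \cup \{1\}$ are essentially immediate: the singleton family $\{\emptyset\}$ and the singleton family $\{1\}$ are each strongly compactifiable (a one-point realization in $\Compacta{[0,1]^\omega}$ is trivially closed), so by Proposition~\ref{thm:countable_union} the union with $\C$ preserves all four properties.

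For $\C \setminus \{\emptyset\}$ in the compactifiable and Polishable cases, Remark~\ref{thm:emptyset} already does the work. For the two strong versions, I would invoke Theorems~\ref{thm:strongly_compactifiable_characterization} and~\ref{thm:strongly_Polishable_characterization} to realize $\C$ as a closed (resp.\ $G_\delta$) family $\F \subseteq \Compacta{[0,1]^\omega}$; since $\{\emptyset\}$ is clopen in $\Compacta{[0,1]^\omega}$, the family $\F \setminus \{\emptyset\}$ has the same complexity, so the property is preserved.

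For $\C_{>1}$ in the two strong cases, the key input is that the subspace of singletons is closed in $\Compacta{[0,1]^\omega}$ (by Observation~\ref{thm:diameter}, it is the zero set of the continuous diameter map, minus the clopen point $\{\emptyset\}$). Subtracting a closed set from a $G_\delta$ family leaves a $G_\delta$ family, so strong Polishability survives; subtracting a closed set from a closed family leaves a relatively open subset of a metrizable space, which is $F_\sigma$, and then as in Proposition~\ref{thm:countable_union} the result \complexityFsigma\ lets me replace this $F_\sigma$ family by an equivalent closed one, giving strong compactifiability.

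The main obstacle is $\C_{>1}$ for compactifiable and Polishable $\C$, where I cannot work in a fixed hyperspace and must produce a new composition. Starting from a composition $\A(q\colon A \to B)$ of $\C$, I would set $C := \{b \in B : \card{q\fiber{b}} > 1\}$ and aim to apply Corollary~\ref{thm:subcomposition}, which demands $C$ to be $F_\sigma$ in the compact case and analytic in the Polish case. In the compact case, $q$ is automatically closed, so by Proposition~\ref{thm:induced_maps} the fiber map $\FiberMap{q}$ is $\tau_V^+$-continuous; combined with the $\tau_V^+$-upper semi-continuity of $\diam$ from Observation~\ref{thm:diameter}, the preimage $C = \FiberMap{q}\preim{\{K : \diam(K) > 0\}}$ is open in $B$, hence $F_\sigma$. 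In the Polish case closedness of $q$ is unavailable, but $C$ is the $\pi_B$-image of the intersection of the closed set $\{(a, a', b) \in A \times A \times B : q(a) = b = q(a')\}$ with the open set $\{(a, a', b) : a \neq a'\}$, so $C$ is a continuous image of a Polish space and thus analytic. Corollary~\ref{thm:subcomposition} then delivers the required (Polish) composition of $\C_{>1}$.
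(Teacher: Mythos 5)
Your proposal is correct and follows essentially the same route as the paper: the additive cases via countable unions, $\C\setminus\set{∅}$ via Remark~\ref{thm:emptyset} and clopenness of $\set{∅}$, the strong $\C_{>1}$ cases via closedness of the degenerate sets in the hyperspace together with the $F_σ$-to-closed reduction, and the compactifiable/Polishable $\C_{>1}$ cases via exactly the same set $C$, the same $τ_V^+$-semicontinuity argument in the compact case, and the same projection-of-(closed~$∩$~open) argument in the Polish case, finishing with Corollary~\ref{thm:subcomposition}. No gaps.
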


	\begin{notation}
		Let $\C$ be a class of topological spaces.
		\begin{itemize}
			\item $\downset{\C}$ denotes the class of all subspaces of members of $\C$.
			\item $\upset{\C}$ denotes the class of all superspaces of members of $\C$.
			\item $\homeocopies{\C}$ denotes the class of all homeomorphic copies of members of $\C$.
			\item $\contimages{\C}$ denotes the class of all continuous images of members of $\C$.
			\item $\contpreimages{\C}$ denotes the class of all continuous preimages of members of $\C$, i.e. the class of all spaces than can be continuously mapped onto a member of $\C$.
		\end{itemize}
		We also denote the classes of all metrizable compacta and all continua by $\AllCompacta$ and $\AllContinua$, respectively, so we can denote e.g.\ the class of all subcontinua of members of $\C$ by $\downset{\C} ∩ \AllContinua$. For a topological space $X$ and a family $\F ⊆ \powset{X}$, the notation $\upset{\F} ∩ \powset{X}$ means “all supersets of members of $\F$ that are subsets of $X$, all endowed with the subspace topology”. This is consistent with the definition of $\upset{\C}$ above when $\powset{X}$ is viewed as a set of topological spaces.
	\end{notation}
	
	\begin{observation} \label{thm:restriction_to_continua}
		If $\C$ is a strongly compactifiable or strongly Polishable class of compacta, then so is the class $\C ∩ \AllContinua$ of all continua from $\C$ and the class $\C \setminus \AllContinua$ of all disconnected compacta from $\C$.
		If $\C$ is a strongly Polishable class of Polish spaces, then so is the class $\C ∩ \AllCompacta$ of all compacta from $\C$.
		
		\begin{proof}
			In the first case, there is a \alt{closed}{$G_δ$} family $\F ⊆ \Compacta{[0, 1]^ω}$ such that $\F \homeo \C$. We have $\C ∩ \AllContinua \homeo \F ∩ \Continua{[0, 1]^ω}$, which is \alt{closed}{$G_δ$} not only in $\Continua{[0, 1]^ω}$, but also in $\Compacta{[0, 1]^ω}$ since $\Continua{X}$ is closed in $\Compacta{X}$ for every Hausdorff space $X$.
			Similarly, $\C \setminus \AllContinua \homeo \F \setminus \Continua{[0, 1]^ω}$, which is \alt{$F_σ$}{$G_δ$} in $\Compacta{[0, 1]^ω}$.
			
			If $\C$ is a strongly Polishable class of Polish spaces, then by Construction~\ref{con:to_hyperspace} there is a Polish space $X$ and a Polish family $\F ⊆ \Closed{X}$ equivalent to $\C$ which is closed by Observation~\ref{thm:closed_family} since the hyperspace $\Closed{X}$ is Hausdorff.
			It follows that $\C ∩ \AllCompacta$ is equivalent to the family $\F ∩ \Compacta{X}$, which is closed in the Polish space $\Compacta{X}$.
		\end{proof}
	\end{observation}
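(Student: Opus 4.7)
The plan is to handle the three sub-claims separately, in each case translating compositions into hyperspace realizations via Theorem~\ref{thm:strongly_compactifiable_characterization} or Theorem~\ref{thm:strongly_Polishable_characterization} and then exploiting that $\Continua{X}$ (resp.\ $\Compacta{X}$) sits ``nicely'' inside the ambient hyperspace of closed/compact sets.

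For the first assertion, I would use the characterization theorems to obtain a closed (resp.\ $G_δ$) family $\F ⊆ \Compacta{[0,1]^ω}$ equivalent to $\C$. Since $\Continua{X}$ is closed in $\Compacta{X}$ for every Hausdorff $X$, the intersection $\F ∩ \Continua{[0,1]^ω}$ inherits the same descriptive complexity as a subset of $\Compacta{[0,1]^ω}$; applying Theorem~\ref{thm:strongly_compactifiable_characterization} (resp.\ \ref{thm:strongly_Polishable_characterization}) in the backward direction then gives that $\C ∩ \AllContinua$ is strongly compactifiable (resp.\ strongly Polishable).

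For $\C \setminus \AllContinua$, the set $\Compacta{[0,1]^ω} \setminus \Continua{[0,1]^ω}$ is open in $\Compacta{[0,1]^ω}$, so $\F \setminus \Continua{[0,1]^ω}$ is $F_σ$ in the strongly compactifiable case and $G_δ$ in the strongly Polishable case. The latter is directly a witness via Theorem~\ref{thm:strongly_Polishable_characterization}. The former requires the nontrivial conversion \complexityFsigma{} of an $F_σ$ family into an equivalent closed one (exactly the ingredient already used in Proposition~\ref{thm:countable_union}).

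For the last claim, I would start from a strong Polish composition $\A(q\maps A \to B)$ of $\C$ and apply Construction~\ref{con:to_hyperspace} to produce $\F_\A ⊆ \Closed{A}$, homeomorphic to $B$ via $\FiberMap{q}$. Equipping $\Closed{A}$ with the Wijsman topology induced by a complete metric on $A$ makes it a Polish Hausdorff space (by Beer's result cited in the Wijsman subsection), and Observation~\ref{thm:closed_family} then gives that $\F_\A$ is closed in $\Closed{A}$. Intersecting with $\Compacta{A}$ yields a realization of $\C ∩ \AllCompacta$ as a closed subset of $\Compacta{A}$, and an embedding $A \into (0,1)^ω$ together with Proposition~\ref{thm:induced_maps} transports this to a closed family in $\Compacta{(0,1)^ω}$ witnessing strong Polishability via Theorem~\ref{thm:strongly_Polishable_characterization}.

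The main delicacy is in the third step: one must reconcile the Wijsman topology on $\Closed{A}$ (in which closedness of $\F_\A$ is detected by Observation~\ref{thm:closed_family}) with the Vietoris/Hausdorff-metric topology on $\Compacta{A}$ used in the strong Polishability characterization. The key point, recorded in the Wijsman subsection, is that these two topologies agree on $\Compacta{A}$, so closedness transfers under the restriction to compact members. Once that is in hand, the rest of each argument is a routine application of the characterization theorems.
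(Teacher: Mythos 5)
Your treatment of the first two claims ($\C \cap \AllContinua$ and $\C \setminus \AllContinua$) is correct and follows the same route as the paper; you are in fact more explicit than the paper in noting that the $F_\sigma$ family obtained for $\C \setminus \AllContinua$ in the strongly compactifiable case must be converted to an equivalent closed family via \complexityFsigma{}, exactly as in Proposition~\ref{thm:countable_union}.

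The third claim is where there is a genuine gap. Your ``key point'' --- that the Wijsman and Vietoris topologies agree on $\Compacta{A}$ --- is false, and the paper says so explicitly in the Wijsman subsection: the Wijsman topology is coarser than both the Vietoris and Hausdorff-metric topologies, and \emph{in general they are not equal even on} $\Compacta{X}$. Moreover, Observation~\ref{thm:closed_family} cannot be invoked for the Wijsman topology: its proof rests on the $\tau_V$-continuity of $\PreimageMap{q} \circ \ImageMap{q}$ and of $\ImageMap{q}$ supplied by Proposition~\ref{thm:induced_maps}, which is a statement about the Vietoris topology, and the observation concerns Hausdorff subspaces of $\powset{A}$ with that topology. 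So you neither obtain Wijsman-closedness of $\F_\A$ nor could you transfer it in the way you describe. (Had you obtained it, the transfer would in fact go through for the opposite reason --- a Wijsman-closed set is Vietoris-closed because Vietoris is finer --- but the premise is unavailable.) The repair is to drop the Wijsman topology entirely: the Vietoris topology on $\Closed{A}$ is already Hausdorff for metrizable $A$ (separate $F \neq F'$ by sets of the form $U^-$ and $V^+$ using regularity), so Observation~\ref{thm:closed_family} applies directly and shows that $\F_\A$ is $\tau_V$-closed in $\Closed{A}$. Hence $\F_\A \cap \Compacta{A}$ is a closed subset of the Polish space $\Compacta{A}$ (where Vietoris and Hausdorff-metric topologies do coincide), and Theorem~\ref{thm:strongly_Polishable_characterization} applies with $X = A$; the final re-embedding into $(0,1)^\omega$ is unnecessary. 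This is exactly the paper's argument.
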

	
	\begin{question}
		Is the previous observation true also for compactifiable and Polishable classes?
	\end{question}
	
	\begin{proposition}
		If $\C$ is a \alt{compactifiable}{Polishable} class, then $\downset{\C} ∩ \AllCompacta$ is a \alt{strongly compactifiable}{strongly Polishable} class.
		
		\begin{proof}
			Let $\A(q\maps A \to B)$ be a witnessing composition. It is enough to observe that $\downset{\C} ∩ \AllCompacta \homeo (\ImageMap{q})\preim{\subsets{≤1}{B}} ∩ \Compacta{A}$, which is a closed subset of $\Compacta{A}$ since the family of all degenerate subspaces of $B$, $\subsets{≤1}{B}$, is $τ_V^-$-closed in $\powset{B}$.
		\end{proof}
	\end{proposition}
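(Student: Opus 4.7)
The plan is to take a \alt{compact}{Polish} composition $\A(q\maps A \to B)$ witnessing that $\C$ is \alt{compactifiable}{Polishable}, and to realize $\downset{\C} ∩ \AllCompacta$ as a subfamily of the hyperspace $\Compacta{A}$. Specifically, I would consider
\[
	\F := \set{K ∈ \Compacta{A} \given q\im{K} \text{ has at most one element}} = (\ImageMap{q})\preim{\subsets{≤1}{B}} ∩ \Compacta{A}.
\]
An element of $\F$ is precisely a compact subset of some single fiber $q\fiber{b} = A_b$ (or the empty set), hence a compact subspace of a member of $\C$; conversely, every compact subspace of a space in $\C$ transfers via the equivalence $\C \homeo \set{A_b \given b ∈ B}$ and the inclusions $A_b ⊆ A$ to an element of $\F$, so $\F \homeo \downset{\C} ∩ \AllCompacta$.

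The main step is to verify that $\F$ is closed in $\Compacta{A}$. The image map $\ImageMap{q}\maps \Compacta{A} \to \Compacta{B}$ is Vietoris-continuous by Proposition~\ref{thm:induced_maps}, so it suffices to check that the family $\subsets{≤1}{B}$ of degenerate subsets is closed in $\Compacta{B}$; in fact, it is already $τ_V^-$-closed in $\powset{B}$. Concretely, if a set $L ⊆ B$ contains two distinct points $x ≠ y$, then Hausdorffness of $B$ yields disjoint open $U ∋ x$ and $V ∋ y$, and $U^- ∩ V^-$ is a $τ_V^-$-neighborhood of $L$ consisting solely of sets with at least two points. This elementary separation argument is essentially the only nontrivial step.

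Once $\F$ is known to be closed in $\Compacta{A}$, the proof finishes by invoking the hyperspace characterizations. In the compact case $A$ is a metrizable compactum and $\F ⊆ \Compacta{A}$ is a closed family equivalent to the target class, so Theorem~\ref{thm:strongly_compactifiable_characterization} yields strong compactifiability. In the Polish case $A$ is Polish, whence $\Compacta{A}$ is Polish and the closed set $\F$ is in particular analytic, so Theorem~\ref{thm:strongly_Polishable_characterization} yields strong Polishability. I do not anticipate any serious obstacle; the argument is a clean translation between the composition picture and the hyperspace picture, with the $τ_V^-$-closedness of $\subsets{≤1}{B}$ being the one detail requiring explicit verification.
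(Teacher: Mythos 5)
Your proposal is correct and follows essentially the same route as the paper: you use the very same family $(\ImageMap{q})\preim{\subsets{≤1}{B}} ∩ \Compacta{A}$, the same closedness argument via $τ_V^-$-closedness of $\subsets{≤1}{B}$ together with continuity of $\ImageMap{q}$, and the same appeal to the hyperspace characterizations of strong compactifiability and strong Polishability. The only difference is that you spell out the details the paper leaves implicit.
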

	
	\begin{corollary} \label{thm:universal}
		Every hereditary class of metrizable compacta or continua with a universal element (i.e. $\C \homeo \downset{\set{X}} ∩ \AllCompacta$ or $\C \homeo \downset{\set{X}} ∩ \AllContinua$) is strongly compactifiable.
		This includes the classes of all 
			compacta, 
			totally disconnected compacta, 
			continua, 
			continua with dimension at most $n$, 
			chainable continua, 
			tree-like continua, 
			and dendrites 
			(in the realm of metrizable compacta).
	\end{corollary}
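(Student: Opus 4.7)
The plan is to derive the corollary directly from the previous proposition together with the fact that a singleton class is trivially compactifiable. Specifically, if $X$ is a universal element for the hereditary class $\C$, then $\C \homeo \downset{\set{X}} \cap \AllCompacta$ (or $\C \homeo \downset{\set{X}} \cap \AllContinua$ in the continua version). The class $\set{X}$ is finite, hence compactifiable by the corollary following Observation~\ref{thm:countable_union_of_compositions}. The previous proposition then yields that $\downset{\set{X}} \cap \AllCompacta$ is strongly compactifiable, which handles the compacta case immediately.

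For the continua version, after establishing that $\downset{\set{X}} \cap \AllCompacta$ is strongly compactifiable, I would apply Observation~\ref{thm:restriction_to_continua}, which asserts that intersecting a strongly compactifiable class with $\AllContinua$ preserves strong compactifiability. Thus $\downset{\set{X}} \cap \AllContinua \homeo \C$ is strongly compactifiable as well.

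It then remains to justify the list of examples by pointing to a universal element in each case: the Hilbert cube $[0,1]^ω$ is universal for metrizable compacta; the Cantor space $2^ω$ for totally disconnected metrizable compacta; the Hilbert cube (viewed as a continuum) for metrizable continua; the Menger $n$-dimensional universal continuum for $n$-dimensional continua; the classical universal chainable continuum (pseudo-arc or Brouwer--Janiszewski--Knaster-type universal chainable continuum) for chainable continua; the universal tree-like continuum for tree-like continua; and Ważewski's universal dendrite $D_ω$ for dendrites. Each of these targets is a metrizable compactum, and the respective classes are hereditary (closed under taking subcompacta, resp.\ subcontinua), so the general statement applies. No significant obstacle is expected — the content is entirely bookkeeping on top of the previous proposition and Observation~\ref{thm:restriction_to_continua}.
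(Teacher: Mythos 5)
Your argument is exactly the intended one: the paper states this as an immediate corollary of the preceding proposition (a singleton class is trivially compactifiable, hence $\downset{\set{X}} ∩ \AllCompacta$ is strongly compactifiable), with the continua case handled by Observation~\ref{thm:restriction_to_continua}, and the examples justified by citing the classical universal elements. One factual slip in your list: the pseudo-arc is \emph{not} a universal chainable continuum --- being hereditarily indecomposable it contains no arc, so it is not universal in the subspace sense; it is only a \emph{common model} for weakly chainable continua (it maps \emph{onto} every chainable continuum), which is the dual notion used in Corollary~\ref{thm:couniversal}. The correct witness is the universal arc-like continuum of \cite[Theorem~12.22]{Nadler}; with that replacement the proof is complete and matches the paper.
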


	In order to obtain a similar result for the induced class $\upset{\C} ∩ \AllCompacta{}$, we shall analyze the set $\upset{\F} ∩ \Compacta{X}$ for a family $\F ⊆ \Compacta{X}$.
	First, we shall need the following refinement of Observation~\ref{thm:membership_closed}.
	
	\begin{observation} \label{thm:inclusion_closed}
		If $X$ is a Hausdorff space, then the inclusion relation of compacts sets is closed, i.e.\ $\R_⊆ ∩ \Compacta{X}^2$ is closed in $\Compacta{X}^2$ where $\R_⊆ := \set{\tuple{A, B} ∈ \powset{X}^2: A ⊆ B}$.
		
		\begin{proof}
			If $x ∈ A \setminus B$ for some $A, B ∈ \Compacta{X}$, then there are disjoint open sets $U, V ⊆ X$ such that $x ∈ U$ and $B ⊆ V$, and hence $U^- × V^+$ is an open neighborhood of $\tuple{A, B}$ disjoint with $\R_⊆$.
		\end{proof}
	\end{observation}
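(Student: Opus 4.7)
The plan is to show that the complement of $\R_\subseteq \cap \Compacta{X}^2$ is open in $\Compacta{X}^2$, following the same pattern as the earlier Observation~\ref{thm:membership_closed}. Given compact sets $A, B \subseteq X$ with $A \not\subseteq B$, I pick a witness $x \in A \setminus B$. The key input is the standard fact that in any Hausdorff space, a point can be separated from a disjoint compact set by disjoint open sets; applying this to $x$ and $B$ yields disjoint open $U, V \subseteq X$ with $x \in U$ and $B \subseteq V$.

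Then $U^- \times V^+$ is a basic Vietoris-open neighborhood of $(A, B)$ in $\Compacta{X}^2$, and I would check it is disjoint from $\R_\subseteq$. Indeed, any $(A', B') \in U^- \times V^+$ contains some $x' \in A' \cap U$, and because $U \cap V = \emptyset$ while $B' \subseteq V$, this forces $x' \notin B'$, hence $A' \not\subseteq B'$. Consequently every point outside $\R_\subseteq \cap \Compacta{X}^2$ has a basic Vietoris neighborhood avoiding $\R_\subseteq$, so the set is closed.

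The only real content here is noting why bare Hausdorffness suffices in this setting. The preceding Observation~\ref{thm:membership_closed} required regularity of $X$ in order to separate a point from an arbitrary closed set, but here compactness of $B$ upgrades Hausdorffness to exactly the separation we need, so no stronger hypothesis on $X$ is required. I do not expect any serious obstacle beyond recognizing and invoking this separation lemma.
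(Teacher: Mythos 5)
Your proof is correct and follows essentially the same route as the paper: separate the witness $x \in A \setminus B$ from the compact set $B$ by disjoint open sets $U, V$ (which Hausdorffness plus compactness of $B$ permits), and observe that $U^- \times V^+$ is a Vietoris-open neighborhood of $(A,B)$ missing $\R_\subseteq$. Your added remark on why mere Hausdorffness suffices here, in contrast to the regularity needed in Observation~\ref{thm:membership_closed}, is accurate.
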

	
	\begin{lemma} \labelblock{thm:compacta_projection}
		Let $X$ be a topological space.
		\begin{enumerate}
			\item The map $\K\maps \Compacta{X} \to \Compacta{\Compacta{X}}$ that maps every compact set $A ⊆ X$ to its compact hyperspace $\Compacta{A}$ is continuous. \loclabel{map}
			\item The projection $π_2\maps \R_⊆ ∩ \Compacta{X}^2 \to \Compacta{X}$ is closed and open. \loclabel{projection}
		\end{enumerate}
		
		\begin{proof}
			Let $R$ denote the relation $\R_⊆ ∩ \Compacta{X}^2$.
			Observe that for every $A ∈ \Compacta{X}$ we have $R^A = A^+ ∩ \Compacta{X} = \Compacta{A}$, which is compact.
			Hence, \locequiv{map}{projection} by Lemma~\ref{thm:clopen_projection} since $\K$ is the map $ρ$ for $R$.
			We shall prove \locref{map}.
			In fact, $\K$ is both $\tuple{τ_V^-, τ_V^-(τ_V)}$-continuous and $\tuple{τ_V^+, τ_V^+(τ_V)}$-continuous.
			(The notation $τ_V^{+/-}(τ_V)$ means $τ_V^{+/-}$ on $\Compacta{Y}$ where $Y = \Compacta{X}$ is endowed with $τ_V$.)
			
			Let $A ∈ \Compacta{X}$ and let $\V ⊆ \Compacta{X}$ be open such that $\Compacta{A} ∈ {}$\alt{$\V^-$}{$\V^+$}. To prove that $\K$ is \alt{$τ_V^-$-continuous}{$τ_V^+$-continuous} it is enough to find $\U$ a \alt{$τ_V^-$-open}{$τ_V^+$-open} neighborhood of $A$ in $\Compacta{X}$ such that $\K\im{\U} ⊆ {}$\alt{$\V^-$}{$\V^+$}.
			The set $\V$ is of the from $⋃_{i ∈ I} ⋂_{j ∈ J_i} \V_{i, j}$ where $J_i$ are finite sets and every $\V_{i, j}$ is $V^-$ or $V^+$ for some open set $V ⊆ X$.
			
			Let us start with the $τ_V^-$-continuity.
			Since $(⋃_{i ∈ I} ⋂_{j ∈ J_i} \V_{i, j})^- = ⋃_{i ∈ I} (⋂_{j ∈ J_i} \V_{i, j})^-$, we may suppose without loss of generality that $\V = ⋂_{j < m} U_j^+ ∩ ⋂_{i < n} V_i^-$ for some open sets $U_j, V_i ⊆ X$. Also, $⋂_{j < m} U_j^+ = (⋂_{j < m} U_j)^+ =: U^+$.
			We put $\U := ⋂_{i < n} (U ∩ V_i)^-$.
			Since $\Compacta{A} ∈ \V^-$, there is $B ∈ \Compacta{A} ∩ U^+ ∩ ⋂_{i < n} V_i^-$, so $B ∩ (U ∩ V_i) ≠ ∅$ for every $i < n$, and since $A ⊇ B$, we have $A ∈ \U$. 
			On the other hand, for every $B ∈ \U$ we may choose points $x_i ∈ B ∩ U ∩ V_i$ for $i < n$, and hence $\set{x_i: i < n} ∈ \Compacta{B} ∩ \V$, so $\Compacta{B} ∈ \V^-$.
			
			Now let us prove the $τ_V^+$-continuity.
			We have \[\textstyle
				\Compacta{A} ⊆ \V = ⋃_{i ∈ I} ⋂_{j ∈ J_i} \V_{i, j} = ⋂_{f ∈ ∏_{i ∈ I} J_i} ⋃_{i ∈ I} \V_{i, f(i)} = ⋂_{f ∈ F} \V_f
			\] where $F := ∏_{i ∈ I} J_i$ and $\V_f := ⋃_{i ∈ I} \V_{i, f(i)}$ for $f ∈ F$. 
			Since $\Compacta{A}$ is compact, we may suppose the sets $I$ and $F$ are finite.
			Since $(⋂_{f ∈ F} \V_f)^+ = ⋂_{f ∈ F} \V_f^+$, it is enough to find for every $f ∈ F$ an open neighborhood $\U_f$ of $A$ such that $\K\im{\U_f} ⊆ \V_f^+$. Therefore, we may suppose without loss of generality that $\V = ⋃_{i < n} U_i^+ ∪ ⋃_{j < m} V_j^-$ for some open sets $U_i, V_j ⊆ X$. Also, $⋃_{j < m} V_j^- = (⋃_{j < m} V_j)^- =: V^-$.
			
			We have $A \setminus V ∈ \Compacta{A} ⊆ \V = ⋃_{i < n} U_i^+ ∪ V^-$, and $n > 0$ since $∅ ∈ \Compacta{A} \setminus V^-$. Hence, there is some $i < n$ such that $A \setminus V ⊆ U_i$. We put $\U := (U_i ∪ V)^+$.
			We have $A = (A \setminus V) ∪ (A ∩ V) ⊆ U_i ∪ V$, so $A ∈ \U$.
			Let $B ∈ \U$. For every $C ∈ \Compacta{B}$ we have $C ⊆ B ⊆ U_i ∪ V$. Therefore, $\Compacta{B} ⊆ (U_i ∪ V)^+ ⊆ U_i^+ ∪ V^- ⊆ \V$, and so $\Compacta{B} ∈ \V^+$.
		\end{proof}
	\end{lemma}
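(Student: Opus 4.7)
The plan is to prove the first claim directly and derive the second from Lemma~\ref{thm:clopen_projection}. Setting $R := \R_⊆ ∩ \Compacta{X}^2$ inside $\Compacta{X} × \Compacta{X}$, the map $ρ$ of that lemma (for the projection $π_2$ to the second factor) becomes $ρ(B) = R^B = \set{A ∈ \Compacta{X} : A ⊆ B} = \Compacta{B}$, so $ρ = \K$. Since the range of $\K$ consists of compact sets, the rectangular-neighborhood-basis hypothesis of Lemma~\ref{thm:clopen_projection} holds automatically, and openness (resp.\ closedness) of $π_2\restr{R}$ will follow from $τ_V^-$-continuity (resp.\ $τ_V^+$-continuity) of $\K$; jointly these also give the continuity needed in the first claim.

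For $τ_V^-$-continuity I would fix $A$ and an open $\V ⊆ \Compacta{X}$ with $\K(A) ∈ \V^-$; unfolding $\V$ into basic opens of $\Compacta{X}$ and using $\bigcap_j U_j^+ = (\bigcap_j U_j)^+$, I reduce to a single basic open $U^+ ∩ \bigcap_{i<n} V_i^-$. A witness $B ∈ \Compacta{A}$ in this basic open supplies points in $A ∩ U ∩ V_i$, showing that $A$ lies in $\U := \bigcap_{i<n}(U ∩ V_i)^-$; conversely, for any $A' ∈ \U$, picking one point from each $A' ∩ U ∩ V_i$ assembles a finite compact $B' ⊆ A'$ in the basic open, giving $\K(A') ∈ \V^-$.

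The $τ_V^+$ direction is the crux and the main obstacle. The plan is to write $\V = \bigcup_{i ∈ I} \bigcap_{j ∈ J_i} \V_{i,j}$ in subbasic form and invoke the distributive law $\bigcup_i \bigcap_j \V_{i,j} = \bigcap_{f ∈ F} \V_f$ with $F := \prod_i J_i$ and $\V_f := \bigcup_i \V_{i, f(i)}$. Compactness of $\Compacta{A}$, covered by the sets $\bigcap_j \V_{i,j}$, lets me assume $I$ and each $J_i$, hence $F$, are finite. Since $+$ distributes over finite intersections, it suffices to find, for each fixed $f$, a neighborhood $\U_f$ of $A$ with $\K\im{\U_f} ⊆ \V_f^+$, reducing to $\V = \bigcup_{i<n} U_i^+ ∪ V^-$ where $V := \bigcup_{j<m} V_j$.

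The decisive move is then to apply $\Compacta{A} ⊆ \V$ to the element $A \setminus V ∈ \Compacta{A}$: since $A \setminus V ∉ V^-$, some $U_i^+$ must appear in $\V$ and $A \setminus V ⊆ U_i$ (with $n > 0$ forced anyway by $∅ ∈ \Compacta{A} \setminus V^-$). Then $\U := (U_i ∪ V)^+$ is a $τ_V^+$-neighborhood of $A$, and for any $A' ∈ \U$ every $C ∈ \Compacta{A'}$ lies in $U_i ∪ V$, hence in $U_i^+ ∪ V^- ⊆ \V$, so $\K(A') ∈ \V^+$. The delicate points will be the distributive-law rewriting, the compactness reduction to finite index sets, and the clever choice of $A \setminus V$ to pin down a single specific $U_i$.
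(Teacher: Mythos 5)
Your proposal is correct and follows essentially the same route as the paper: the same identification of $\K$ with the map $ρ$ for $R = \R_⊆ ∩ \Compacta{X}^2$ so that Lemma~\ref{thm:clopen_projection} (with the compactness of each $\Compacta{B}$ supplying the rectangular-neighborhood-basis condition) makes the two claims equivalent, the same $τ_V^-$ argument via $\U = ⋂_{i<n}(U ∩ V_i)^-$, and the same $τ_V^+$ argument via the distributive law, compactness of $\Compacta{A}$ to reduce to finitely many indices, and the key element $A \setminus V$ pinning down a single $U_i$ with $\U = (U_i ∪ V)^+$. The only cosmetic difference is that the finiteness of the sets $J_i$ comes for free from the definition of basic open sets rather than from compactness, which is needed only to shrink $I$.
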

	
	\begin{corollary} \label{thm:upset}
		Let $X$ be a topological space and $\F ⊆ \Compacta{X}$. 
		\begin{enumerate}
			\item If $\F$ is closed, then $\upset{\F} ∩ \Compacta{X}$ is closed.
			\item If $X$ is Polish and $\F$ is analytic, then $\upset{\F} ∩ \Compacta{X}$ is analytic.
		\end{enumerate}
		
		\begin{proof}
			Observe that $\upset{\F} ∩ \Compacta{X}$ is the $π_2$-image of the set $\H := \R_⊆ ∩ (\F × \Compacta{X})$.
			If $\F$ is closed, then $\H$ is closed in $\R_⊆ ∩ \Compacta{X}^2$, and the claim follows since the map $π_2\restr{\R_⊆ ∩ \Compacta{X}^2}$ is closed by Lemma~\ref{thm:compacta_projection}.
			If $\F$ is analytic, then $\H$ is analytic since $\Compacta{X}$ is Polish and $\R_⊆$ is closed in $\Compacta{X}^2$ by Observation~\ref{thm:inclusion_closed}. The claim follows since the map $π_2$ is continuous.
		\end{proof}
	\end{corollary}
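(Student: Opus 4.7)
The plan is to realize $\upset{\F} \cap \Compacta{X}$ as a projection image and then apply the closedness/continuity properties of the projection $\pi_2$ restricted to the inclusion relation. Explicitly, set
\[
\H := \R_\subseteq \cap (\F \times \Compacta{X}) \subseteq \Compacta{X}^2,
\]
and observe that a compactum $B \subseteq X$ lies in $\upset{\F} \cap \Compacta{X}$ if and only if there exists $A \in \F$ with $A \subseteq B$; that is, $\upset{\F} \cap \Compacta{X} = \pi_2(\H)$.

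For part (i), first I would note that $\F$ closed in $\Compacta{X}$ implies $\F \times \Compacta{X}$ is closed in $\Compacta{X}^2$, and hence $\H$ is closed in $\R_\subseteq \cap \Compacta{X}^2$. Then I would invoke Lemma~\ref{thm:compacta_projection}\locref{projection}, which states that $\pi_2 \restr{\R_\subseteq \cap \Compacta{X}^2}$ is closed (and open). Consequently, the direct image $\pi_2(\H) = \upset{\F} \cap \Compacta{X}$ is closed in $\Compacta{X}$, as required. Here the Hausdorff hypothesis used in Observation~\ref{thm:inclusion_closed} and Lemma~\ref{thm:compacta_projection} is automatic once $X$ is a topological space with $\Compacta{X}$ being taken with the Vietoris topology, and closedness of $\R_\subseteq \cap \Compacta{X}^2$ in $\Compacta{X}^2$ is not actually needed for this part.

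For part (ii), with $X$ Polish the hyperspace $\Compacta{X}$ is Polish as well, so $\Compacta{X}^2$ is a Polish space in which analyticity is the usual notion. Since $\F$ is analytic, so is $\F \times \Compacta{X}$, and by Observation~\ref{thm:inclusion_closed} the relation $\R_\subseteq \cap \Compacta{X}^2$ is closed in $\Compacta{X}^2$; hence $\H$ is analytic as the intersection of an analytic set with a closed set. Finally, $\pi_2 : \Compacta{X}^2 \to \Compacta{X}$ is continuous, so $\pi_2(\H) = \upset{\F} \cap \Compacta{X}$ is analytic.

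There is no real obstacle here — the work has already been packaged into Lemma~\ref{thm:compacta_projection} and Observation~\ref{thm:inclusion_closed}. The only point to double-check is that the projection trick genuinely produces exactly $\upset{\F} \cap \Compacta{X}$ (i.e., that one is allowed to witness $B \supseteq A$ by a \emph{compact} $A \in \F$, which is automatic since $\F \subseteq \Compacta{X}$), and that for part (i) no additional separation axiom beyond what is implicit in using the Vietoris topology is needed.
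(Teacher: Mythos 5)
Your proposal is correct and follows essentially the same route as the paper: the same set $\H = \R_⊆ ∩ (\F × \Compacta{X})$, the same identification $\upset{\F} ∩ \Compacta{X} = π_2\im{\H}$, closedness of the restricted projection from Lemma~\ref{thm:compacta_projection} for (i), and continuity of $π_2$ plus Observation~\ref{thm:inclusion_closed} for (ii). Your side remark that closedness of $\R_⊆$ in $\Compacta{X}^2$ is not needed for part (i) is a correct and slightly finer observation than the paper makes explicit (though the phrase about the Hausdorff hypothesis being ``automatic'' is imprecise -- it is simply not needed there, since Lemma~\ref{thm:compacta_projection} assumes nothing of $X$).
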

	
	\begin{proposition} \label{thm:upper_class}
		If $\C$ is a strongly compactifiable or a strongly Polishable class of compacta, then so is the corresponding class of all metrizable compact superspaces $\upset{\C} ∩ \AllCompacta$.
		
		\begin{proof}
			Let us denote the Hilbert cube by $Q$ and let $Z$ be a Z-set in $Q$ that is homeomorphic to $Q$ (it exists by \cite[Lemma~5.1.3]{Mill}).
			Our class $\C$ is equivalent to a closed or an analytic family $\F ⊆ \Compacta{Z}$.
			We show that $\upset{\C} ∩ \AllCompacta$ is equivalent to $\upset{\F} ∩ \Compacta{Q}$, which is closed or analytic by Corollary~\ref{thm:upset}.
			Clearly, every member of $\upset{\F} ∩ \Compacta{Q}$ is homeomorphic to a member of $\upset{\C} ∩ \AllCompacta$.
			On the other hand, let $K ∈ \upset{\C} ∩ \AllCompacta$.
			We may suppose that $K ∈ \Compacta{Z}$.
			Since $K$ has a subspace $C ∈ \C$, there is a homeomorphism $h\maps C \to F ∈ \F$.
			By \cite[Theorem~5.3.7]{Mill} $h$ can be extended to a homeomorphism $\bar{h}\maps Q \to Q$.
			We have $K \homeo \bar{h}\im{K} ∈ \upset{\F} ∩ \Compacta{Q}$.
		\end{proof}
	\end{proposition}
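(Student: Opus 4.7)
The plan is to realize $\upset{\C} \cap \AllCompacta$ as a family in a hyperspace of the right descriptive complexity, using Corollary~\ref{thm:upset} together with a universal ambient space in which embeddings extend to self-homeomorphisms. By Theorem~\ref{thm:strongly_compactifiable_characterization} (resp.\ Theorem~\ref{thm:strongly_Polishable_characterization}), there is a closed (resp.\ analytic) family $\F$ inside $\Compacta{X}$ for some metrizable compactum $X$ equivalent to $\C$. The natural candidate for the new family is $\upset{\F} \cap \Compacta{Q}$ in the Hilbert cube $Q$, provided we first embed $X$ inside $Q$ in a sufficiently flexible way.

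I would take $X$ to sit as a Z-set $Z$ inside $Q$ with $Z$ itself homeomorphic to $Q$ — such a $Z$ exists in the Hilbert cube. Viewing $\F$ as a subfamily of $\Compacta{Z} \subseteq \Compacta{Q}$, Corollary~\ref{thm:upset} applies and gives that $\upset{\F} \cap \Compacta{Q}$ is closed in the compactifiable case and analytic in the Polishable case. Equivalence of this family to $\upset{\C} \cap \AllCompacta$ then yields the conclusion by the characterization theorems.

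It remains to verify that $\upset{\F} \cap \Compacta{Q} \homeo \upset{\C} \cap \AllCompacta$. The forward inclusion is automatic: any compact $K \subseteq Q$ containing some $F \in \F$ is a metrizable compact superspace of $F \in \F \homeo \C$. The main obstacle is the reverse direction: given $K \in \upset{\C} \cap \AllCompacta$, we must find a homeomorphic copy $K' \subseteq Q$ with $K' \supseteq F$ for some $F \in \F$. Since $K$ is a metrizable compactum, we may embed $K$ into $Z$, and by assumption $K$ contains a subspace $C \in \C$, which is homeomorphic to some $F \in \F \subseteq \Compacta{Z}$ via a homeomorphism $h\maps C \to F$.

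The homeomorphism $h$ is a map between compact subsets of $Z$, and since $Z$ is a Z-set in $Q$, both $C$ and $F$ are Z-sets in $Q$. Invoking the Z-set unknotting / homeomorphism extension theorem for the Hilbert cube, $h$ extends to a self-homeomorphism $\bar{h}\maps Q \to Q$. Then $\bar{h}\im{K} \in \Compacta{Q}$ is homeomorphic to $K$ and contains $\bar{h}\im{C} = F \in \F$, so $\bar{h}\im{K} \in \upset{\F} \cap \Compacta{Q}$. The hardest step is locating and applying the correct extension theorem; once this is in hand the rest of the argument is bookkeeping, combining Corollary~\ref{thm:upset} with the relevant characterization theorem.
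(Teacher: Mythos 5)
Your proposal is correct and follows essentially the same route as the paper's proof: realize $\C$ as a closed or analytic family $\F$ in $\Compacta{Z}$ for a Z-set $Z \homeo Q$ in the Hilbert cube $Q$, apply Corollary~\ref{thm:upset} to $\upset{\F} \cap \Compacta{Q}$, and use the Z-set homeomorphism extension theorem to show this family is equivalent to $\upset{\C} \cap \AllCompacta$. The paper cites \cite[Lemma~5.1.3]{Mill} for the existence of $Z$ and \cite[Theorem~5.3.7]{Mill} for the extension step you describe.
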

	
	\begin{example} \label{ex:uncountable}
		The class of all uncountable metrizable compacta is strongly compactifiable.
		Since every uncountable metrizable compactum contains a copy of the Cantor space, the class is equivalent to $\upset{\set{2^ω}} ∩ \AllCompacta$.
	\end{example}

	\begin{proposition}
		If $\C$ is a strongly compactifiable or a strongly Polishable class of compacta, then so is the corresponding class of all metrizable compact continuous preimages $\contpreimages{\C} ∩ \AllCompacta$.
		
		\begin{proof}
			Let $Q$ denote the Hilbert cube $[0, 1]^ω$ and let $\F ⊆ \Compacta{Q}$ be equivalent to $\C$.
			We will show that $\contpreimages{\C} ∩ \AllCompacta \homeo \H := \set{K ∈ \Compacta{Q × Q}: π_2\im{K} ∈ \F}$.
			Clearly, $\H ⊆ \contpreimages{\F} ∩ \AllCompacta$.
			On the other hand, let $K ∈ \contpreimages{\F} ∩ \AllCompacta$.
			There is an embedding $e\maps K \into Q$, and there is a continuous map $f\maps K \onto Y ⊆ Q$ for some $Y ∈ \F$.
			The map $(e \diag f)\maps K \to Q × Q$ defined by $x \mapsto \tuple{e(x), f(x)}$ is an embedding because of the embedding $e$, so $K \homeo \rng(e \diag f) ⊆ Q × Q$.
			At the same time $π_2\im{\rng(e \diag f)} = \rng(f) = Y$, and so $\rng(e \diag f) ∈ \H$.
			Altogether, we have $\contpreimages{\C} ∩ \AllCompacta \homeo \contpreimages{\F} ∩ \AllCompacta \homeo \H$.
			Since $\H = (\ImageMap{π_2})\preim{\F}$, if $\F$ is closed or analytic, so is $\H$.
		\end{proof}
	\end{proposition}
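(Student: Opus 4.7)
The plan is to apply the hyperspace characterizations (Theorems \ref{thm:strongly_compactifiable_characterization} and \ref{thm:strongly_Polishable_characterization}) and transport a witnessing family for $\C$ to a witnessing family for $\contpreimages{\C} \cap \AllCompacta$ via a graph construction in the product of two Hilbert cubes. Let $Q := [0,1]^ω$, and let $\F ⊆ \Compacta{Q}$ be a closed (respectively analytic) family equivalent to $\C$. I would define
\[
	\H := \set{K ∈ \Compacta{Q × Q} \given π_2\im{K} ∈ \F}
\]
and argue that $\H$ is equivalent to $\contpreimages{\C} \cap \AllCompacta$ and has the right complexity.

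First I would verify that $\H$ has the desired complexity. By Proposition~\ref{thm:induced_maps}, the image map $\ImageMap{π_2}\maps \Compacta{Q × Q} \to \Compacta{Q}$ is continuous, and $\H = (\ImageMap{π_2})\preim{\F}$, so $\H$ inherits being closed (respectively analytic) from $\F$. The Hilbert cube $Q × Q$ is a metrizable compactum, and $\Compacta{Q × Q}$ is compact metrizable, so in either case $\H$ sits in a hyperspace of the right type, which is all that Theorem~\ref{thm:strongly_compactifiable_characterization} or \ref{thm:strongly_Polishable_characterization} asks for.

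Next I would check the equivalence $\H \homeo \contpreimages{\C} \cap \AllCompacta$. The easy direction is that every $K ∈ \H$ is a metrizable compactum which the continuous projection $π_2\restr{K}$ maps onto $π_2\im{K} ∈ \F ⊆ \homeocopies{\C}$, so $K ∈ \contpreimages{\C}$. For the other direction, given $K ∈ \contpreimages{\C} \cap \AllCompacta$, pick an embedding $e\maps K \into Q$ (which exists since $K$ is a metrizable compactum) together with a continuous surjection $f\maps K \onto Y$ for some $Y ∈ \F$ (which exists by definition of $\contpreimages{\C}$, after moving to a homeomorphic copy in $\F$). The diagonal map $e \diag f\maps K \to Q × Q$ is then an embedding because $e$ alone already separates points, and its range satisfies $π_2\im{\rng(e \diag f)} = \rng(f) = Y ∈ \F$, so $\rng(e \diag f) ∈ \H$ is a homeomorphic copy of $K$.

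The only substantive step is the second direction of the equivalence; the rest is bookkeeping with the already-established properties of hyperspace operations. The main obstacle I anticipate is choosing the ambient space so that both “embeddable into” and “continuously mapped onto an element of $\F$” can be witnessed simultaneously by a single subspace of that ambient space. Using $Q × Q$ resolves this cleanly: the first coordinate absorbs the embedding (forcing the graph to be homeomorphic to $K$) while the second coordinate records the continuous image in $\F$.
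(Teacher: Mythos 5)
Your proposal is correct and matches the paper's own proof essentially verbatim: the same family $\H = (\ImageMap{π_2})\preim{\F} ⊆ \Compacta{Q × Q}$, the same diagonal embedding $e \diag f$ to realize each $K ∈ \contpreimages{\C} ∩ \AllCompacta$ inside $\H$, and the same appeal to continuity of $\ImageMap{π_2}$ for the complexity claim. Nothing further is needed.
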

	
	\begin{example}
		We have another way to see that the class of all disconnected metrizable compacta $\AllCompacta \setminus \AllContinua$ is strongly compactifiable (besides Observation~\ref{thm:restriction_to_continua}) since it is exactly $\contpreimages{\set{2}} ∩ \AllCompacta$, where $2$ denotes the two-point discrete space.
	\end{example}
	
	\begin{example} \label{ex:infty_components}
		The class of all metrizable compact spaces with infinitely many components is strongly compactifiable since it is exactly $\contpreimages{\set{ω + 1}} ∩ \AllCompacta$, where $ω + 1$ denotes the convergent sequence.
		
		\begin{proof}
			For every metrizable compactum $X$ we consider the equivalence $\sim$ induced by its components.
			$X/{\sim}$ may be viewed as a subspace of the Cantor space $2^ω$. If $X$ has infinitely many components, then $X/{\sim}$ contains a nontrivial converging sequence.
			The conclusion follows from the fact that every closed subspace of $2^ω$ is its retract.
		\end{proof}
	\end{example}
	
	\begin{example} \label{ex:non-Peano}
		Let $\N$ denote the class of all topological spaces that are \emph{not} locally connected.
		The class of all non-Peano metrizable continua $\N ∩ \AllContinua$ is strongly compactifiable since it is exactly $\contpreimages{\set{H}} ∩ \AllContinua$, where $H$ denotes the harmonic fan.
		The class of all non-locally connected metrizable compacta $\N ∩ \AllCompacta$ is strongly compactifiable since it is exactly $\contpreimages{\set{ω + 1, H}} ∩ \AllCompacta$.
		
		\begin{proof}
			Since Peano continua are exactly continuous images of the unit interval, every continuum that maps continuously onto $H$ (which is clearly not locally connected) is not Peano, so $\contpreimages{\set{H}} ∩ \AllContinua ⊆ \N ∩ \AllContinua$.
			On the other hand, it is known that each member of $\N ∩ \AllContinua$ maps continuously onto $H$ \cite{Bellamy}.
			
			Let $K ∈ \AllCompacta$.
			By Example~\ref{ex:infty_components}, $K$ has infinitely many components if and only if $K$ continuously maps onto $ω + 1$, and in this case $K$ is not locally connected.
			This is because $K$ contains a convergent sequence such that each its member and the limit are in different components.
			So we may suppose that $K$ has finitely many components.
			If $K ∈ \N$, then one of the components is a non-Peano continuum, and so $K ∈ \contpreimages{\set{H}}$ as before.
			On the other hand if $K ∈ \contpreimages{\set{H}}$, then one of its components maps onto a subfan $H' ⊆ H$ that contains infinitely many endpoints of $H$.
			It follows that $H' ∈ \N$, and so $K ∈ \N$.
		\end{proof}
	\end{example}
	
	\begin{question}
		Is the class of all Peano continua strongly compactifiable?
		We will show in Corollary~\ref{thm:couniversal} that it is compactifiable.
	\end{question}
	
	\begin{example} \label{ex:dendrites}
		Let $\D$ denote the class of all dendrites, $\N$ the class of all non-locally connected spaces, and $S^1$ the unit circle.
		Both $\D$ and $\AllContinua \setminus \D$ are strongly compactifiable classes – $\D$ by Corollary~\ref{thm:universal}, and $\AllContinua \setminus \D$ since dendrites are exactly Peano continua not containing a simple closed curve, so $\AllContinua \setminus \D \homeo (\upset{\set{S^1}} ∪ \N) ∩ \AllContinua$, which is strongly compactifiable by Proposition~\ref{thm:upper_class} and Example~\ref{ex:non-Peano}.
	\end{example}

	In the following paragraphs we shall prove a preservation theorem for $\contimages{\C} ∩ \AllCompacta$ and a necessary condition for being a strongly Polishable class.
	
	\begin{lemma} \label{thm:partial_graphs}
		Let $X$, $Y$ be metrizable. The following sets are $G_δ$.
		\begin{itemize}
			\item $\G_{\homeo} := \set{G ∈ \Compacta{X × Y}: G$ is a graph of a partial homeomorphism$}$,
			\item $\G_{\onto} := \set{G ∈ \Compacta{X × Y}: G$ is a graph of a partial continuous surjection$}$.
		\end{itemize}
		
		\begin{proof}
			A set $G ∈ \Compacta{X × Y}$ is a member of $\G_{\homeo}$ if and only if the maps $π_X\restr{G}$ and $π_Y\restr{G}$ are injective. The necessity is clear. On the other hand, if they are injective, then they are homeomorphisms onto their images since $G$ is compact.
			It follows that $G$ is the graph of the homeomorphism $π_Y\restr{G} ∘ (π_X\restr{G})\inv \maps π_X\im{G} \to π_Y\im{G}$.
			Analogously, $G ∈ \G_{\onto}$ if and only if $π_X\restr{G}$ is injective.
			
			For every $n ∈ ℕ$ let $\F_n := \set{F ∈ \Compacta{X × Y}: \card{π_Y\im{F}} = 1$ and $\diam(F) ≥ \frac{1}{n}}$, which is a closed set since $\ImageMap{π_Y}$ is continuous, $\subsets{1}{Y}$ is closed in $\Compacta{Y}$, and $\diam\maps \Compacta{X × Y} \to [0, ∞)$ is continuous. The map $π_Y\restr{G}$ is \emph{not} injective if and only if there are $x_1 ≠ x_2 ∈ X$ and $y ∈ Y$ such that $\set{\tuple{x_1, y}, \tuple{x_2, y}} ⊆ G$ if and only if there is $n ∈ ℕ$ and a set $F ∈ \F_n$ such that $F ⊆ G$, i.e. if and only if $G ∈ ⋃_{n ∈ ℕ} \upset{\F_n} ∩ \Compacta{X × Y}$, which is an $F_σ$ set by Corollary~\ref{thm:upset}. Analogously for $π_X\restr{G}$.
		\end{proof}
	\end{lemma}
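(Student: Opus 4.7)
The plan is to reduce each condition to statements about injectivity of the two coordinate projections $\pi_X\restr{G}$ and $\pi_Y\restr{G}$, and then show separately that the set of $G$ for which a given projection is injective is $G_\delta$ in $\Compacta{X\times Y}$. Intersecting the two $G_\delta$ sets gives $\G_{\homeo}$; keeping only injectivity of $\pi_X\restr{G}$ gives $\G_{\onto}$. The reduction itself is elementary: for $G \in \Compacta{X\times Y}$ compactness forces a continuous partial inverse, so injectivity of $\pi_X\restr{G}$ already implies that $G$ is the graph of a continuous surjection from $\pi_X\im{G}$ onto $\pi_Y\im{G}$, and additional injectivity of $\pi_Y\restr{G}$ turns it into a partial homeomorphism.

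Next, the key step: I would show that $\{G : \pi_Y\restr{G} \text{ is not injective}\}$ is $F_\sigma$. A projection $\pi_Y\restr{G}$ fails to be injective precisely when $G$ contains two distinct points sharing the $Y$-coordinate, equivalently when $G$ contains some nondegenerate compactum $F \subseteq X\times Y$ with $\pi_Y\im{F}$ a singleton. Fixing a metric on $X\times Y$, for each $n \in \NN$ let
\[
	\F_n := \set{F \in \Compacta{X\times Y} : \card{\pi_Y\im{F}} = 1 \text{ and } \diam(F) \geq 1/n}.
\]
Each $\F_n$ is closed in $\Compacta{X\times Y}$ because $\ImageMap{\pi_Y}$ is continuous, $\subsets{1}{Y}$ is closed in $\Compacta{Y}$, and $\diam$ is continuous on $\Compacta{X\times Y}$ (Observation~\ref{thm:diameter}). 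The set of $G$ failing injectivity of $\pi_Y\restr{G}$ equals $\bigcup_{n} \upset{\F_n} \cap \Compacta{X\times Y}$, which is $F_\sigma$ by Corollary~\ref{thm:upset}. The symmetric argument handles $\pi_X\restr{G}$.

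The main obstacle, and the reason the argument is not entirely trivial, is the passage from ``$G$ contains two points with the same $Y$-coordinate'' to a closed condition on $G$: one cannot directly use $\upset{\set{F}}$ because $F$ would then have to be fixed. Introducing the diameter bound $\diam(F) \geq 1/n$ restores compactness of the family $\F_n$ of witnesses and lets Corollary~\ref{thm:upset} apply. Once that $F_\sigma$ description is in hand, the statement follows immediately by taking complements and, for $\G_{\homeo}$, intersecting the two $G_\delta$ conditions.
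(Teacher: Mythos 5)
Your proposal is correct and follows essentially the same route as the paper: reducing both conditions to injectivity of the coordinate projections, and showing the non-injective locus is $F_σ$ via the closed witness families $\F_n$ (with the diameter cutoff) and Corollary~\ref{thm:upset}. No gaps.
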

	
	It is known that the homeomorphic classification for compact metric spaces is analytic \cite[Proposition~14.4.3]{Gao}.
	We shall use the following formulation of the result.
	
	\begin{corollary} \label{thm:homeo_relations}
		Let $X$, $Y$ be Polish spaces. The following relations are analytic.
		\begin{itemize}
			\item $\R_{\homeo} := \set{\tuple{A, B} ∈ \Compacta{X} × \Compacta{Y}: B$ is homeomorphic to $A}$,
			\item $\R_{\onto} := \set{\tuple{A, B} ∈ \Compacta{X} × \Compacta{Y}: B$ is a continuous image of $A}$.
		\end{itemize}
		
		\begin{proof}
			We have $\R_{\homeo} = \set{\tuple{π_X\im{G}, π_Y\im{G}}: G ∈ \G_{\homeo}} = (\ImageMap{π_X} \diag \ImageMap{π_Y})\im{\G_{\homeo}}$, which is a continuous image of a $G_δ$ set by Lemma~\ref{thm:partial_graphs}.
			Analogously for $\R_\onto$.
		\end{proof}
	\end{corollary}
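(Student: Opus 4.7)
The plan is to express both $\R_{\homeo}$ and $\R_{\onto}$ as continuous images of the $G_δ$ sets $\G_{\homeo}$ and $\G_{\onto}$ provided by Lemma~\ref{thm:partial_graphs}, and then invoke the fact that a $G_δ$ subset of a Polish space is Polish, so that its continuous image in a Hausdorff space is by definition analytic.

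First I would set up the ambient hyperspace. Since $X$ and $Y$ are Polish, so is $X × Y$, and hence $\Compacta{X × Y}$ is Polish; the subsets $\G_{\homeo}, \G_{\onto} ⊆ \Compacta{X × Y}$ given by Lemma~\ref{thm:partial_graphs} are therefore Polish in the subspace topology. Next I would identify the map witnessing the desired image representation: by Proposition~\ref{thm:induced_maps} each of $\ImageMap{π_X}\maps \Compacta{X × Y} \to \Compacta{X}$ and $\ImageMap{π_Y}\maps \Compacta{X × Y} \to \Compacta{Y}$ is continuous, so the diagonal $Φ := \ImageMap{π_X} \diag \ImageMap{π_Y}\maps \Compacta{X × Y} \to \Compacta{X} × \Compacta{Y}$ sending $G \mapsto \tuple{π_X\im{G}, π_Y\im{G}}$ is continuous.

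The key identification to verify is that $\R_{\homeo} = Φ\im{\G_{\homeo}}$ and $\R_{\onto} = Φ\im{\G_{\onto}}$. The inclusion ``$⊇$'' is immediate: if $G$ is the graph of a partial homeomorphism (respectively partial continuous surjection) from $π_X\im{G}$ onto $π_Y\im{G}$, then $π_Y\im{G}$ is homeomorphic to (resp.\ a continuous image of) $π_X\im{G}$. For ``$⊆$'', given $\tuple{A, B} ∈ \R_{\homeo}$ with witnessing homeomorphism $h\maps A \to B$, its graph $G_h ⊆ X × Y$ is compact (as the image of the compact space $A$ under the continuous map $x \mapsto \tuple{x, h(x)}$) and both coordinate projections on $G_h$ are injective, so $G_h ∈ \G_{\homeo}$, and clearly $Φ(G_h) = \tuple{A, B}$. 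The argument for $\R_{\onto}$ is the same, dropping injectivity of $π_Y\restr{G_h}$.

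The conclusion is then immediate: $\R_{\homeo}$ and $\R_{\onto}$ are continuous images of Polish spaces into a Polish space, hence analytic. I do not anticipate a serious obstacle here — the entire content of the corollary is packaged in Lemma~\ref{thm:partial_graphs} together with the standard fact that continuous images of $G_δ$ subsets of Polish spaces are analytic; the only small point requiring care is the observation that the graph of a witnessing map is automatically a compact subset of $X × Y$ because its domain is compact and the map is continuous into the Hausdorff space $Y$.
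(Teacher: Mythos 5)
Your proposal is correct and follows essentially the same route as the paper: both express $\R_{\homeo}$ as the image of $\G_{\homeo}$ under the continuous map $\ImageMap{π_X} \diag \ImageMap{π_Y}$ and conclude analyticity from Lemma~\ref{thm:partial_graphs}. You simply spell out the set equality and the compactness of the witnessing graph, which the paper leaves implicit.
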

	
	\begin{proposition}
		If $\C$ is a strongly Polishable class of compacta, then the corresponding class of all metrizable compact continuous images $\contimages{\C} ∩ \AllCompacta$ is also strongly Polishable.
		Moreover, the class $\contimages{\C} ∩ \AllContinua$ is compactifiable.
		
		\begin{proof}
			There is an analytic family $\F ⊆ \Compacta{[0, 1]^ω}$ such that $\C \homeo \F$.
			We have $\contimages{\C} ∩ \AllCompacta \homeo \contimages{\F} ∩ \Compacta{[0, 1]^ω} = \R_\onto\im{\F} = π_2\im{\H}$ where $\H = \R_\onto ∩ (\F × \Compacta{[0, 1]^ω})$, which is an analytic set by Corollary~\ref{thm:homeo_relations}.
			Moreover, either $\contimages{\C} ∩ \AllContinua$ contains $[0, 1]$ and so every Peano continuum, and hence $\contimages{\F} ∩ \Continua{[0, 1]^ω}$ is compactifiable by Corollary~\ref{thm:fiber_fixing_application}, or it consists only of degenerate spaces.
			In both cases, $\contimages{\C} ∩ \AllContinua$ is compactifiable.
		\end{proof}
	\end{proposition}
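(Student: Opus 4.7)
The plan is to realize $\contimages{\C} \cap \AllCompacta$ as an analytic family in the hyperspace $\Compacta{[0,1]^\omega}$, and then invoke Theorem~\ref{thm:strongly_Polishable_characterization}. By that theorem, $\C$ is equivalent to some analytic family $\F \subseteq \Compacta{[0,1]^\omega}$. Since every metrizable compactum embeds into the Hilbert cube, a continuous image of a member of $\C$ can always be realized inside $[0,1]^\omega$, so $\contimages{\C} \cap \AllCompacta \homeo \contimages{\F} \cap \Compacta{[0,1]^\omega}$.

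Next, I would express $\contimages{\F} \cap \Compacta{[0,1]^\omega}$ as the second projection of $\H := \R_\onto \cap (\F \times \Compacta{[0,1]^\omega})$, where $\R_\onto \subseteq \Compacta{[0,1]^\omega} \times \Compacta{[0,1]^\omega}$ is the ``continuous image'' relation. By Corollary~\ref{thm:homeo_relations}, $\R_\onto$ is analytic, hence $\H$ is analytic (intersection of an analytic set with an analytic rectangle), and continuous projections preserve analyticity. Consequently $\contimages{\F} \cap \Compacta{[0,1]^\omega}$ is an analytic subfamily of $\Compacta{[0,1]^\omega}$, and strong Polishability follows.

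For the moreover part, I want to apply Corollary~\ref{thm:fiber_fixing_application}~\locref{Peano} to $\contimages{\F} \cap \Continua{[0,1]^\omega}$, which is analytic by the same argument as above (intersect $\H$ with the closed set $\F \times \Continua{[0,1]^\omega}$ before projecting). The key observation is a dichotomy: either $\contimages{\C} \cap \AllContinua$ contains a nondegenerate continuum, or all its members are singletons (or empty). In the second case the class is at most countable and hence compactifiable by the corollary after Observation~\ref{thm:countable_union_of_compositions}. In the first case, if $K \in \contimages{\C}$ is a nondegenerate continuum, then by Urysohn's lemma there is a continuous surjection $K \onto [0,1]$ (a continuous function separating two distinct points of $K$ has connected image containing $0$ and $1$), so $[0,1] \in \contimages{\C}$; by transitivity of ``continuous image'', every Peano continuum lies in $\contimages{\C} \cap \AllContinua$, and Corollary~\ref{thm:fiber_fixing_application}~\locref{Peano} applies.

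I expect the main obstacle to be stating the identification $\contimages{\C} \cap \AllCompacta \homeo \contimages{\F} \cap \Compacta{[0,1]^\omega}$ carefully, since $\contimages{\C}$ as a class is only defined up to homeomorphism and one must check that every compact continuous image of a space in $\C$ does arise as a continuous image (inside $[0,1]^\omega$) of some $F \in \F$. This is routine: given $C \in \C$ with $C \homeo F \in \F$ and a continuous surjection $C \onto K$, we transport along the homeomorphism and embed $K$ into $[0,1]^\omega$. The rest is mostly bookkeeping with analyticity under continuous images and preimages.
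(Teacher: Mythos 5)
Your proposal is correct and follows essentially the same route as the paper: realize $\C$ as an analytic family $\F ⊆ \Compacta{[0,1]^ω}$, write $\contimages{\F} ∩ \Compacta{[0,1]^ω}$ as $π_2\im{\R_\onto ∩ (\F × \Compacta{[0,1]^ω})}$ using Corollary~\ref{thm:homeo_relations}, and for the continua case use the same dichotomy (all members degenerate versus containing $[0,1]$ and hence every Peano continuum) together with Corollary~\ref{thm:fiber_fixing_application}. The extra details you supply (the Urysohn-type surjection onto $[0,1]$ and the countability of the degenerate case) are exactly what the paper leaves implicit.
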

	
	We obtain a corollary dual to Corollary~\ref{thm:universal}.
	
	\begin{corollary} \label{thm:couniversal}
		Every class of metrizable \alt{compacta}{continua} closed under continuous images with a common model is \alt{strongly Polishable}{compactifiable}.
		This includes the class of all Peano continua (images of $[0, 1]$) and the class of all weakly chainable continua (images of the pseudoarc).
	\end{corollary}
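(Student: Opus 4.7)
\begin{proof}
The idea is to recognize this as an immediate consequence of the preceding proposition applied to a singleton class.

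Let $\C$ be a class of metrizable compacta (resp.\ continua) closed under continuous images and containing a common model $X$, i.e.\ $X \in \C$ and every $Y \in \C$ is a continuous image of $X$. Then every member of $\C$ belongs to $\contimages{\set{X}} ∩ \AllCompacta$ (resp.\ $\contimages{\set{X}} ∩ \AllContinua$), and conversely every continuous image of $X$ that is a metrizable compactum (resp.\ continuum) belongs to $\C$ by closure under continuous images. Hence
\[
	\C \homeo \contimages{\set{X}} ∩ \AllCompacta \quad \text{(resp.\ } \C \homeo \contimages{\set{X}} ∩ \AllContinua\text{)}.
\]

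Now $\set{X}$ is a singleton class of metrizable compacta, which is trivially compactifiable (and hence in particular strongly Polishable) by the corollary to Observation~\ref{thm:countable_union_of_compositions}. Applying the previous proposition to $\set{X}$ gives that $\contimages{\set{X}} ∩ \AllCompacta$ is strongly Polishable and $\contimages{\set{X}} ∩ \AllContinua$ is compactifiable, which yields the claim.

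For the stated applications, the class of Peano continua is closed under continuous images and has $[0,1]$ as a common model by the Hahn–Mazurkiewicz theorem, and the class of weakly chainable continua is by definition the class of continuous images of the pseudoarc and is closed under continuous images.
\end{proof}

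The main content sits entirely in the preceding proposition; the only nontrivial step here is the set-theoretic identification $\C \homeo \contimages{\set{X}} \cap \AllCompacta$, which is immediate from the two hypotheses. I do not expect any obstacle.
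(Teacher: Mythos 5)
Your proposal is correct and matches the paper's (implicit) argument: the corollary is stated as an immediate consequence of the preceding proposition, obtained exactly by the identification $\C \homeo \contimages{\set{X}} \cap \AllCompacta$ (resp.\ $\cap\,\AllContinua$) for the common model $X$ and the observation that the singleton $\set{X}$ is strongly Polishable. No gaps.
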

	
	We finally give the necessary condition.
	
	\begin{theorem} \label{thm:analytic}
		If $\C$ is a strongly Polishable class of compacta, then $\homeocopies{\C} ∩ \Compacta{X}$ is analytic for every Polish space $X$.
		
		\begin{proof}
			There is an analytic set $\F ⊆ \Compacta{[0, 1]^ω}$ such that $\F \homeo \C$. We have $\homeocopies{\C} ∩ \Compacta{X} = \R_{\homeo}\im{\F} = π_2\im{\H}$ where $\R_{\homeo}$ is the relation of being homeomorphic on $\Compacta{[0, 1]^ω} × \Compacta{X}$ and $\H = \R_{\homeo} ∩ (\F × \Compacta{X})$, which is an analytic set by Corollary~\ref{thm:homeo_relations}.
		\end{proof}
	\end{theorem}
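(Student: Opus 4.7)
The plan is to combine the characterization of strongly Polishable classes (Theorem~\ref{thm:strongly_Polishable_characterization}) with the analyticity of the homeomorphism relation (Corollary~\ref{thm:homeo_relations}), then conclude by a projection argument. The key observation is that $\homeocopies{\C} ∩ \Compacta{X}$ is, up to equivalence, the set of all compacta in $X$ that are homeomorphic to some member of a fixed analytic realization of $\C$ in a nice ambient hyperspace.

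First I would invoke Theorem~\ref{thm:strongly_Polishable_characterization} to fix an analytic family $\F \subseteq \Compacta{[0,1]^ω}$ with $\F \homeo \C$. Then I would consider the product $\Compacta{[0,1]^ω} × \Compacta{X}$, which is a product of two Polish spaces and hence Polish, and on it the homeomorphism relation $\R_{\homeo} = \set{\tuple{A,B} : A \homeo B}$. By Corollary~\ref{thm:homeo_relations} this relation is analytic.

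Next I would form the set $\H := \R_{\homeo} ∩ (\F × \Compacta{X})$. Since $\F × \Compacta{X}$ is analytic (being a product of an analytic set with a Polish space) and $\R_{\homeo}$ is analytic, the intersection $\H$ is analytic (the class of analytic subsets of a Polish space is closed under countable intersections, in particular under intersection with another analytic set, after sitting both inside the ambient Polish product). Finally, I would observe that
\[
	\homeocopies{\C} ∩ \Compacta{X} = π_2\im{\H},
\]
because a compactum $K \subseteq X$ is in $\homeocopies{\C}$ exactly when it is homeomorphic to some member of $\F$. Since $π_2$ is continuous and analytic sets are preserved under continuous images into Polish (or even analytic) spaces, $π_2\im{\H}$ is analytic, which concludes the proof.

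No step here looks genuinely hard, since the heavy lifting is packaged into Theorem~\ref{thm:strongly_Polishable_characterization} and Corollary~\ref{thm:homeo_relations}. The only point requiring care is the bookkeeping of the homeomorphism relation between two different ambient Polish spaces ($[0,1]^ω$ and $X$), but Corollary~\ref{thm:homeo_relations} is already stated in exactly this generality, so the argument reduces to the clean identity $\homeocopies{\C} ∩ \Compacta{X} = π_2\im{\R_{\homeo} ∩ (\F × \Compacta{X})}$.
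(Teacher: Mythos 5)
Your proposal is correct and follows essentially the same route as the paper: fix an analytic $\F \subseteq \Compacta{[0,1]^ω}$ equivalent to $\C$, intersect the analytic homeomorphism relation $\R_{\homeo} \subseteq \Compacta{[0,1]^ω} × \Compacta{X}$ with $\F × \Compacta{X}$, and project. The paper's proof is exactly this identity $\homeocopies{\C} ∩ \Compacta{X} = π_2\im{\R_{\homeo} ∩ (\F × \Compacta{X})}$ combined with Corollary~\ref{thm:homeo_relations}.
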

	
	\begin{corollary} \label{thm:analytic_embeddable}
		If $\C$ is a class of metrizable compacta embeddable into a Polish space $X$, then it is equivalent to $\homeocopies{\C} ∩ \Compacta{X}$.
		Hence, $\C$ is strongly Polishable if and only if $\homeocopies{\C} ∩ \Compacta{X}$ is analytic.
	\end{corollary}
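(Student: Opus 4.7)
The plan is to split the corollary into its two assertions and reduce each to results already established. For the first assertion, that $\C$ is equivalent to $\homeocopies{\C} \cap \Compacta{X}$, I would argue both directions directly from the definitions. One inclusion is immediate: every element of $\homeocopies{\C} \cap \Compacta{X}$ is by definition homeomorphic to some member of $\C$. For the reverse direction, given $C \in \C$, the hypothesis supplies an embedding $e \maps C \into X$; the image $e\im{C}$ is compact (as a continuous image of the compactum $C$), hence lies in $\Compacta{X}$, and it lies in $\homeocopies{\C}$ by construction, so $e\im{C} \in \homeocopies{\C} \cap \Compacta{X}$ witnesses $C$ up to homeomorphism.

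For the biconditional, I would use the equivalence just established together with the earlier characterization and necessary condition. If $\homeocopies{\C} \cap \Compacta{X}$ is analytic, then it is an analytic family in $\Compacta{X}$ with $X$ Polish and it is equivalent to $\C$, so by the implication \textup{(ii)} $\Rightarrow$ \textup{(i)} of Theorem~\ref{thm:strongly_Polishable_characterization} the class $\C$ is strongly Polishable. Conversely, if $\C$ is strongly Polishable, then Theorem~\ref{thm:analytic} directly yields that $\homeocopies{\C} \cap \Compacta{X}$ is analytic in $\Compacta{X}$.

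There is no real obstacle here — the corollary is essentially a packaging of Theorem~\ref{thm:analytic} and the hyperspace characterization. The only thing worth highlighting is the role of the embeddability hypothesis: without it, intersecting $\homeocopies{\C}$ with $\Compacta{X}$ could discard entire homeomorphism types of $\C$ and destroy the equivalence, whereas with it the intersection faithfully represents $\C$ up to $\homeo$.
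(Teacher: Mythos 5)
Your proof is correct and follows exactly the route the paper intends: the corollary is stated without proof precisely because it is the combination of Theorem~\ref{thm:analytic} (necessity) with Theorem~\ref{thm:strongly_Polishable_characterization} (sufficiency of an analytic family in $\Compacta{X}$), once the elementary equivalence $\C \homeo \homeocopies{\C} ∩ \Compacta{X}$ is observed via the embeddability hypothesis. Your explicit verification of that equivalence, and your remark on why embeddability is needed, are accurate and fill in the details the paper leaves implicit.
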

	
	\begin{example}
		Every strongly Polishable class of zero-dimensional compacta is equivalent to an analytic family in $\Compacta{2^ω}$ by Corollary~\ref{thm:analytic_embeddable}, and if it contains a copy of $2^ω$, then it is compactifable by Corollary~\ref{thm:fiber_fixing_application}.
	\end{example}
	
	\begin{remark} \label{thm:almost_never_closed}
		For a strongly compactifiable class $\C$, the family $\homeocopies{\C} ∩ \Compacta{[0, 1]^ω}$ is almost never closed.
		In fact, this happens if and only if $\homeocopies{\C}$ is one of the countably many classes listed in \complexityClosed.
	\end{remark}
	
	\begin{example}
		By \cite[Theorem~27.5]{Kechris} the class of all uncountable compacta in $\Compacta{[0, 1]^ω}$ is analytically complete.
		Together with Example~\ref{ex:uncountable} this shows that there is a strongly compactifiable class $\C$ such that $\homeocopies{\C} ∩ \Compacta{[0, 1]^ω}$ is not Borel.
		It also follows that the class of all countable metrizable compacta is coanalytically complete, and hence is not strongly Polishable.
		Note that by a classical result of Mazurkiewicz and Sierpiński~\cite{MS}, countable metrizable compacta are exactly countable successor ordinals and zero.
	\end{example}
	
	\begin{example}
		By \cite{Marcone} the following classes are also coanalytically complete, and hence not strongly Polishable: hereditarily decomposable continua, dendroids, $λ$-dendroids, arcwise connected continua, uniquely arcwise connected continua, hereditarily locally connected continua.
	\end{example}

	Let us conclude with a result on preservation under intersections.
	
	\begin{proposition} \label{thm:strongly_Polishable_intersection}
		Let $\set{\C_n: n ∈ ω}$, $\C$, $\D$ be classes of metrizable compacta.
		\begin{enumerate}
			\item If the classes $\C_n$ are \alt{strongly Polishable}{Polishable}, then so is the class $⋂_{n ∈ ω} \homeocopies{\C_n}$.
			\item If the classes $\C$ and $\D$ are \alt{strongly Polishable}{Polishable}, then so is the class $\C ∩ \homeocopies{\D}$.
		\end{enumerate}
		
		\begin{proof}
			In the strongly Polishable case we have $⋂_{n ∈ ω} \homeocopies{\C_n} \homeo ⋂_{n ∈ ω} \homeocopies{\C_n} ∩ \Compacta{[0, 1]^ω}$, which is an analytic set by Theorem~\ref{thm:analytic}.
			
			In the Polishable case, by Theorem~\ref{thm:Polishable_characterization} for every $n ∈ ω$ there is a $G_δ$ subset $F_n ⊆ [0, 1]^ω × ω^ω$ such that $\set{F_n^x: x ∈ ω^ω} \homeo \C_n$.
			By \cite[Theorem~28.8]{Kechris} the maps $ρ_n\maps ω^ω \to \Compacta{[0, 1]^ω}$ defined by $x \mapsto F_n^x$ are Borel.
			Let $i, j ∈ ω$.
			We put $A_{i, j} := \set{\tuple{x, y} ∈ ω^ω × ω^ω: F_i^x \homeo F_j^y} = (ρ_i × ρ_j)\preim{\R_{\homeo}}$.
			Since the relation $\R_{\homeo}$ is analytic and the map $ρ_i × ρ_j$ is Borel, the set $A_{i, j}$ is analytic.
			Hence, also the set $A := \set{\tuple{x_n}_{n ∈ ω} ∈ (ω^ω)^ω: \tuple{x_i, x_j} ∈ A_{i, j}$ for every $i, j ∈ ω}$ and its projection $π_0\im{A} ⊆ ω^ω$ are analytic.
			Observe that $⋂_{n ∈ ω} \homeocopies{\C_n} \homeo \set{F_0^x: x ∈ π_0\im{A}}$, and so the intersection is Polishable by Corollary~\ref{thm:subcomposition}.
			
			Unlike $\C ∩ \D$, the class $\C ∩ \homeocopies{\D}$ is equivalent to $\homeocopies{\C} ∩ \homeocopies{\D}$, which is (strongly) Polishable by the previous claim.
		\end{proof}
	\end{proposition}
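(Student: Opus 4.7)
The plan is to treat the two parts separately, and to derive part (ii) from part (i) by the easy observation that $\C \cap \homeocopies{\D}$ is equivalent to $\homeocopies{\C} \cap \homeocopies{\D}$. Indeed, if $X \in \homeocopies{\C} \cap \homeocopies{\D}$ then there is $X_0 \in \C$ with $X_0 \homeo X$, and since $\homeocopies{\D}$ is closed under homeomorphic copies we get $X_0 \in \C \cap \homeocopies{\D}$; the reverse inclusion is trivial. So (ii) is just (i) applied to the two-term sequence $\C_0 := \C$, $\C_1 := \D$.

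For the strongly Polishable case of (i), I would use the already-established hyperspace characterization. The class $\bigcap_{n \in \omega} \homeocopies{\C_n}$ is closed under homeomorphic copies and consists of metrizable compacta, so by Corollary~\ref{thm:analytic_embeddable} it is equivalent to $\bigcap_{n \in \omega} \homeocopies{\C_n} \cap \Compacta{[0,1]^\omega}$. By Theorem~\ref{thm:analytic}, each $\homeocopies{\C_n} \cap \Compacta{[0,1]^\omega}$ is analytic, and a countable intersection of analytic subsets of a Polish space is again analytic; Theorem~\ref{thm:strongly_Polishable_characterization} then closes this case.

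The Polishable case of (i) is the real work, and the plan is to move the whole problem into the indexing spaces. For each $n$ pick, via Theorem~\ref{thm:Polishable_characterization}, a $G_\delta$ set $F_n \subseteq [0,1]^\omega \times \omega^\omega$ whose fibers realize $\C_n$, and consider the fiber map $\rho_n \maps \omega^\omega \to \Compacta{[0,1]^\omega}$ sending $x \mapsto F_n^x$, which is Borel by \cite[Theorem~28.8]{Kechris}. Pulling back the analytic relation $\R_{\homeo}$ (Corollary~\ref{thm:homeo_relations}) along $\rho_i \times \rho_j$ produces analytic sets $A_{i,j} = \set{\tuple{x, y}: F_i^x \homeo F_j^y}$, and then
\[ A := \set{\tuple{x_n}_{n \in \omega} \in (\omega^\omega)^\omega : \tuple{x_i, x_j} \in A_{i,j} \text{ for all } i, j \in \omega} \]
is analytic, so its first-coordinate projection $P := \pi_0\im{A} \subseteq \omega^\omega$ is analytic as well. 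By construction $\set{F_0^x : x \in P}$ is equivalent to $\bigcap_n \homeocopies{\C_n}$, and Polishability follows from Corollary~\ref{thm:subcomposition} applied to the Polish composition induced by $F_0$ and the analytic subset $P$ of its indexing space.

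The main obstacle is this last case: one cannot simply ``intersect'' the compositions $F_n$ directly, because the witnessing parametrizations are independent of each other. The trick is to use Borel measurability of the fiber maps $\rho_n$ together with analyticity of $\R_{\homeo}$ to encode the compatibility conditions $F_i^{x_i} \homeo F_j^{x_j}$ as a single analytic constraint on sequences of indices, and then to let Corollary~\ref{thm:subcomposition} recover a genuine Polish composition over an analytic subset of $\omega^\omega$.
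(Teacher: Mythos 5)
Your proposal is correct and follows essentially the same route as the paper's own proof: the same hyperspace/analyticity argument via Theorem~\ref{thm:analytic} for the strongly Polishable case, the identical encoding of the compatibility conditions $F_i^{x_i} \homeo F_j^{x_j}$ as an analytic subset of $(\omega^\omega)^\omega$ followed by Corollary~\ref{thm:subcomposition} for the Polishable case, and the same reduction of (ii) to (i) via $\C \cap \homeocopies{\D} \homeo \homeocopies{\C} \cap \homeocopies{\D}$.
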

	
	\begin{remark}
		A similar argument would give us that if $\C$ is strongly compactifiable and $\homeocopies{\D} ∩ \Compacta{[0, 1]^ω}$ is closed, then $\C ∩ \homeocopies{\D}$ is strongly compactifiable, but by Remark~\ref{thm:almost_never_closed}, $\homeocopies{\D}$ would have to be one of countably many special classes.
		One of these classes is the class of all metrizable continua $\AllContinua$, so Observation~\ref{thm:restriction_to_continua} is a special case.
	\end{remark}
	
	%\begin{question}
		%Is there a better theorem on preservation of (strong) compactifiability under intersections?
	%\end{question}
	
	\begin{example}
		We shall extend Example~\ref{ex:dendrites}.
		Let $\P$ be the class of all Peano continua.
		The class $\P \setminus \D$ is strongly Polishable by Corollary~\ref{thm:couniversal} and Proposition~\ref{thm:strongly_Polishable_intersection} since it is equivalent to $\P ∩ \upset{\set{S^1}}$.
	\end{example}

\section{Compactifiability and inverse limits} \label{sec:limits}
	
	In the last section we give a construction of compact or Polish compositions of classes of spaces expressible as inverse limits of sequences of spaces and bonding maps from suitable families.
	
	\medskip
	
	First, we shall recall some standard notions and the related notation.
	An \emph{inverse sequence} is a pair $\tuple{X_*, f_*}$ where $X_* = \tuple{X_n}_{n ∈ ω}$ is a sequence of topological spaces and $f_* = \tuple{f_n\maps X_n \from X_{n + 1}}_{n ∈ ω}$ is a sequence of continuous maps.
	For every $n ≤ m ∈ ω$ we denote by $f_{n, m}$ the composition $(f_n ∘ f_{n + 1} ∘ \cdots ∘ f_{m - 1})\maps X_n \from X_m$.
	In particular, $f_{n, n} = \id_{X_n}$ and $f_{n, n + 1} = f_n$ for every $n$.
	
	The \emph{limit} of $\tuple{X_*, f_*}$ is the pair $\tuple{X_∞, \tuple{f_{n, ∞}}_{n ∈ ω}}$ where
	the \emph{limit space} $X_∞$ is the subspace of $∏_{n ∈ ω} X_n$ consisting of all sequences $x_* = \tuple{x_n}_{n ∈ ω}$ such that $x_n = f_n(x_{n + 1})$ for every $n$, and the \emph{limit maps} $f_{n, ∞}\maps X_n \from X_∞$ are just the coordinate projections restricted to $X_∞$.
	Abstractly, the limit is defined by its universal property: the limit maps satisfy $f_{n, ∞} = f_n ∘ f_{n + 1, ∞}$ for every $n$, and for every other family of continuous maps $g_n\maps X_n \from Y$ satisfying $g_n = f_n ∘ g_{n + 1}$ for every $n$, there is a unique continuous map $g_∞\maps X_∞ \from Y$ such that $g_n = f_{n, ∞} ∘ g_∞$ for every $n$.
	
	\medskip
	
	Recall that a \emph{tree} is a partially ordered set $T$ with the smallest element such that for every node $t ∈ T$ the set $\set{s ∈ T: s < t}$ is well-ordered.
	A \emph{lower subset} of $T$ is a subset $S ⊆ T$ such that for every $t ≤ s ∈ T$ with $s ∈ S$ we have also $t ∈ S$.
	A \emph{subtree} of a tree $T$ is a lower subset $S ⊆ T$ endowed with the induced ordering.
	We will be interested in trees of countable height.
	These can be always represented as subtrees of $A^{<ω} = ⋃_{n ∈ ω} A^n$ for a sufficiently large set $A$.
	The members of $A^{<ω}$ are $A$-valued tuples $t$ of finite length $\len{t}$, and they are ordered by extension, i.e. $t ≤ s$ if and only if $s\restr{\len{t}} = t$.
	For $T$ a subtree of $A^{<ω}$ and $n ∈ ω$, the \emph{level $n$} of $T$, denoted by $T_n$, is the set $\set{t ∈ T: \len{t} = n} = T ∩ A^n$.
	
	Let $T$ be a tree.
	A node $s ∈ T$ is a \emph{successor} of a node $t ∈ T$ if $s > t$ and there is no other node $s > s' > t$.
	We denote this by $s \succ t$.
	A tree is \alt{\emph{countably}}{\emph{finitely}} \emph{splitting} if every node has only \alt{countably}{finitely} many successors.
	Every countably splitting tree of countable height may be realized as a subtree of $ω^{<ω}$.
	
	Let $t, s ∈ A^{<ω}$ for some $A$.
	We denote the \emph{concatenation} of the tuples $t$ and $s$ by $t \concat s$.
	That means, $t \concat s ∈ A^{<ω}$, $(t \concat s)(n) = t(n)$ for $n < \len{t}$ and $(t \concat s)(\len{t} + n) = s(n)$ for $n < \len{s}$.
	For $a ∈ A$, the notation $t \concat a$ is a shortcut for $t \concat \tuple{a}$.
	Note that for a subtree $T ⊆ A^{<ω}$, every successor of a node $t ∈ T$ is of the form $t \concat a$ for some $a ∈ A$.
	
	Let $T$ be a tree.
	Recall that a \emph{branch} of $T$ is any maximal chain $α ⊆ T$, i.e. a subset of $T$ whose elements are pairwise comparable and which is maximal with respect to inclusion.
	Suppose that $T$ is a subtree of some $A^{<ω}$.
	In that case, for every infinite branch of $T$ there is a unique sequence $α ∈ A^ω$ such that the infinite branch as a set is $\set{α\restr{n}: n ∈ ω}$.
	For this reason it is common to identify infinite branches of $A^{<ω}$ with $A^ω$.
	By $T_∞$ we denote the \emph{body} of $T$, i.e. the set of all infinite branches of $T ⊆ A^{<ω}$ viewed as a subspace of $A^ω$ with the product topology with $A$ being discrete.
	The standard basic open subsets of $T_∞$ are of the form $N_t := \set{α ∈ T_∞: α\restr{\len{t}} = t}$ for $t ∈ T$.
	It is easy to see that $T_∞$ is always a closed subspace of the space $A^ω$, and so is Polish if $A$ is countable.
	For more details on trees see for example \cite[Section~I.2]{Kechris}.
	
	\begin{definition}
		Let $T$ be a subtree of $A^{<ω}$ for some $A$.
		By a \emph{$T$-inverse system} we mean a pair $\tuple{X_*, f_*}$ where $X_* = \tuple{X_t}_{t ∈ T}$ is a family of topological spaces and $f_* = \tuple{f_{t, s}\maps X_t \from X_s}_{t ≤ s ∈ T}$ is a family of continuous maps such that $f_{t, t} = \id_{X_t}$ for every $t$ and $f_{t, s} ∘ f_{s, r} = f_{t, r}$ for every $t ≤ s ≤ r$.
		Of course, the system is determined by the successor maps $f_{t, t \concat \tuple{a}}$ where $t \concat \tuple{a} ∈ T$.
		Note that an inverse sequence may be viewed as a $1^{<ω}$-inverse system.
	\end{definition}
	
	\begin{construction} \label{con:tree}
		Let $T$ be a subtree of $ω^{<ω}$ and let $\tuple{X_*, f_*}$ be a $T$-inverse system.
		The following construction produces a composition of the limit spaces along the infinite branches of $T$.
		
		We consider the inverse sequence $\tuple{X^\oplus_*, f^\oplus_*}$ obtained by summing $\tuple{X_*, f_*}$ along each level of $T$, i.e. for each $n ∈ ω$ we put $X^\oplus_n := \TopSum_{t ∈ T_n} X_t$ and $f^\oplus_n := (\TopSum_{t ∈ T_n} f^\oplus_t)\maps X^\oplus_n \from X^\oplus_{n + 1}$ where the maps $f^\oplus_t := (\CoDiag_{s \succ t} f_{t, s})\maps X_t \from ∑_{s \succ t} X_s$ are preliminary codiagonal sums of all maps going to $X_t$.
		(We denote codiagonal sums by $\CoDiag$ and diagonal products by $\Diag$. The notation is inspired by \cite[2.1.11 and 2.3.20]{Engelking}.)
		
		Moreover, for each branch $α ∈ T_∞$ we consider the inverse sequence $\tuple{X^α_*, f^α_*}$ defined as the restriction of $\tuple{X_*, f_*}$ to $α$, i.e. $X^α_n = X_{α\restr{n}}$ and $f^α_n = f_{α\restr{n}, α\restr{n + 1}}\maps X^α_n \from X^α_{n + 1}$.
		For every $n ∈ ω$ we denote the embedding $X^α_n \into X^\oplus_n$ by $e^α_n$.
		This yields a natural transformation $e^α_*\maps \tuple{X^α_*, f^α_*} \into \tuple{X^\oplus_*, f^\oplus_*}$ and the limit embedding $e^α_∞\maps X^α_∞ \into X^\oplus_∞$.
		
		\begin{proof}[Claim] \let \qed \relax
		The family of subspaces $\tuple{\rng(e^α_∞)}_{α ∈ T_∞}$ is a decomposition of $X^\oplus_∞$, and the induced map $q\maps X^\oplus_∞ \to T_∞$ (where $q\fiber{α} = \rng(e^α_∞)$) is continuous.
		Hence, we have a composition $\A(q\maps X^\oplus_∞ \to T_∞)$ of the family of embeddings $\tuple{e^α_∞}_{α ∈ T_∞}$.
		If all spaces $X_t$ for $t ∈ T$ are Polish, then the composition is Polish.
		If all spaces $X_t$ for $t ∈ T$ are metrizable compacta and $T$ is finitely splitting, then the composition is compact.
		\end{proof}
		
		\begin{proof}
			Without loss of generality, we may suppose that $X_t ⊆ X^\oplus_n$ for every $n ∈ ω$ and $t ∈ T_n$, and that $X^α_∞ ⊆ X^\oplus_∞$ for every $α ∈ T_∞$.
			
			First, $\tuple{X^α_∞}_{α ∈ T_∞}$ is a decomposition of $X^\oplus_∞$.
			Clearly, for every $x_* ∈ X^\oplus_∞ ⊆ ∏_{n ∈ ω} X^\oplus_n$ and every $n ∈ ω$ there is a unique node $t_n ∈ T_n$ such that $x_n ∈ X_{t_n}$, and since $x_n = f^\oplus_n(x_{n + 1})$, we have that $t_{n + 1}$ is a successor of $t_n$ and $x_n = f_{t_n, t_{n + 1}}(x_{n + 1})$.
			Hence, $α := \set{t_n: n ∈ ω}$ is the unique infinite branch such that $x_n = f^α_n(x_{n + 1})$ for every $n ∈ ω$, i.e. such that $x_* ∈ X^α_∞$.
			
			Let $n ∈ ω$ and $t ∈ T_n$.
			For every $x_* ∈ X^α_∞ ⊆ X^\oplus_∞$ we have $α(n) = t$ if and only if $x_n ∈ X_t$.
			Hence, we have $q\preim{N_t} = \set{x_* ∈ X^\oplus_∞: x_n ∈ X_t} = (f^\oplus_{n, ∞})\preim{X_t}$, and $X_t$ is clopen in $X^\oplus_n$.
			Therefore, $q\maps X^\oplus_∞ \to T_∞$ is continuous.
			
			If $T$ is \alt{countably}{finitely} splitting, then every level $T_n$ is \alt{countable}{finite}, and so every space $X^\oplus_n$ is \alt{Polish}{metrizable compact} if all spaces $X_t$ are.
			So is their limit $X^\oplus_∞$ as a closed subspace of their product.
			The indexing space $T_∞$ is a closed subset of $ω^ω$, and therefore is Polish.
			Moreover, if $T$ is finitely splitting, $T_∞$ is a closed subset of $∏_{n ∈ ω} F_n$ for some finite sets $F_n ⊆ ω$ since every level $T_n$ is finite, and so it is a metrizable compactum.
		\end{proof}
	\end{construction}
	
	\begin{remark}
		Construction~\ref{con:tree} gives a way of proving that some class of spaces is compactifiable or Polishable.
		On the other hand, note that every compact composition $\A(q\maps A \to 2^ω)$ gives us a $2^{<ω}$-inverse system of inclusions.
		Namely, for every $t ∈ T := 2^{<ω}$ we put $X_t := q\preim{N_t}$,
		and for very $s ≥ t$ we define $f_{t, s}$ by the inclusion $X_s ⊆ X_t$.
		We obtain a $T$-inverse system $\tuple{X_*, f_*}$ and for every $α ∈ T_∞ = 2^ω$ we have $X^α_∞ = ⋂_{n ∈ ω} q\preim{N_{α\restr{n}}} = q\preim{⋂_{n ∈ ω} N_{α\restr{n}}} = q\fiber{α}$.
		Moreover, $X^\oplus_n = A$ for every $n$, so by applying Construction~\ref{con:tree} to $\tuple{X_*, f_*}$, we obtain the composition $\A$ we started with.
	\end{remark}
	
	\begin{definition}
		For a class $\F$ of continuous maps, we call a topological space \emph{$\F$-like} if it is the limit of an inverse sequence with bonding maps in $\F$.
		By $\Obj(\F)$ we denote the class of all domains and codomains of the maps from $\F$.
		
		For a class $\P$ of topological spaces, we call a topological space \emph{$\P$-like} if it is $\F$-like for $\F$ being the class of all continuous surjections between spaces from $\P$.
		Classically, $\set{[0, 1]}$-like spaces are called \emph{arc-like}, and $\set{S^1}$-like spaces are called \emph{circle-like}.
	\end{definition}
	
	\begin{proposition} \label{thm:tree_from_maps}
		Let $\F$ be a countable family of continuous maps.
		There is a subtree $T ⊆ ω^{<ω}$ and a $T$-inverse system $\tuple{X_*, f_*}$ such that $\set{X^α_∞: α ∈ T_∞}$ is equivalent to the class of all $\F$-like spaces.
		Moreover, we may have $T ⊆ 2^{<ω}$ if every space $X$ that is the codomain of infinitely maps from $\F$ is $\F$-like (in particular, if $\id_X ∈ \F$).
		
		\begin{proof}
			If every $\F$-like space is empty, then the empty tree or a single-branch tree with empty maps works.
			Otherwise, let us fix a nonempty $\F$-like space $X_∅$ formally distinct from each member of $\Obj(\F)$.
			Moreover, for every $X ∈ \Obj(\F)$, let us fix a constant map $c_X\maps X \to X_∅$.
			We put $\F' := \F ∪ \set{c_X: X ∈ \Obj(\F)}$.
			A space is $\F'$-like if and only if it is $\F$-like since every inverse sequence with bonding maps from $\F'$ either has all bonding maps in $\F$ or starts with some $c_X$ and continues with maps from $\F$.
			We have extended $\F$ to $\F'$ just to have a common codomain to serve as the root of our tree.
			
			Let $A := \card{\F'} ≤ ω$ and let $\tuple{f_n}_{n ∈ A}$ be an enumeration of $\F'$.
			We associate every $t ∈ A^{<ω}$ with the composition $f_{t(0)} ∘ f_{t(1)} ∘ \cdots ∘ f_{t(\len{t} - 1)}$ if the composition is possible and if the codomain is $X_∅$.
			Namely, let $T$ be the subtree of $A^{<ω} ⊆ ω^{<ω}$ consisting of all tuples $t$
				such that $\dom(f_{t(n)}) = \cod(f_{t(n + 1)})$ for every $n + 1 < \len{t}$
				and $\cod(f_{t(0)}) = X_∅$ or $t = ∅$.
			We put $X_t := \dom(f_{t(\len{t} - 1)})$ for $t ∈ T \setminus \set{∅}$.
			Note that $X_∅$ is already defined.
			For every $t \concat n ∈ T$ we put $f_{t, t \concat n} := f_n$.
			This defines the desired $T$-inverse system $\tuple{X_*, f_*}$.
			The first level consists exactly of the added maps $c_X$, i.e. $\set{f_{∅, \tuple{n}}: \tuple{n} ∈ T} = \set{c_X: X ∈ \Obj(\F)}$.
			Moreover, the restrictions $\tuple{X^α_*, f^α_*}$ along infinite branches $α ∈ T_∞$ are exactly inverse sequences with bonding maps in $\F'$ and starting at $X_∅$, which are exactly all inverse sequences with bonding maps from $\F$ prepended with the corresponding map $c_X$.
			
			Now let us turn the tree $T ⊆ ω^{<ω}$ into a tree $S ⊆ 2^{<ω}$, and define the corresponding $S$-inverse system $\tuple{Y_*, g_*}$.
			First, we define canonical transformations between $ω^{<ω}$ and $2^{<ω}$.
			For every $n ∈ ω$ let $[n]$ be the sequence of $n$ ones followed by zero,
			and for every $t ∈ ω^{<ω}$ let $φ(t)$ be the concatenation $[t(0)] \concat [t(1)] \concat \cdots \concat [t(\len{t} - 1)]$.
			This defines an injective map $φ\maps ω^{<ω} \to 2^{<ω}$.
			Essentially, each branching $t \concat 0, t \concat 1, t \concat 2, …$ is replaced by $t \concat 0, t \concat \tuple{1, 0}, t\concat \tuple{1, 1, 0}, …$
			The image $φ\im{ω^{<ω}}$ consists of all sequences ending with $0$ and the empty sequence.
			Let $ψ\maps 2^{<ω} \to ω^{<ω}$ be the extension of $φ\inv$ by $ψ(s \concat 1) := ψ(s)$ for $s ∈ 2^{<ω}$.
			
			Let $S := ψ\preim{T}$, which is the tree generated by $φ\im{T}$.
			For each $s ∈ S$ let $Y_s := X_{ψ(s)}$, $g_{s, s \concat 1} := \id_{X_{ψ(s)}}$, and $g_{s, s \concat 0} := f_{ψ(s), ψ(s \concat 0)}$.
			This defines the desired $S$-inverse system $\tuple{Y_*, g_*}$.
			Infinite branches $α ∈ T_∞$ are in a one-to-one correspondence with infinite branches $β ∈ S_∞$ with infinitely many zeroes, and the limits of the corresponding inverse sequences $\tuple{X^α_*, f^α_*}$ and $\tuple{Y^β_*, g^β_*}$ are the same – the maps $g^β_n$ with $β(n) = 0$ are exactly the maps $f^α_n$, while the maps $g^β_n$ with $β(n) = 1$ are identities.
			Note that $S_∞$ may contain also branches with only finitely many zeroes, but the corresponding inverse sequence is eventually constant $\id_X$ for some $X ∈ \Obj(\F')$, and so its limit is $X$.
			By the construction, $X = X_t$ for some $t ∈ T$ with infinitely many successors.
			Hence, $X$ is the codomain of infinitely many maps from $\F'$, so it is either the codomain of infinitely many maps from $\F$, or $X = X_∅$.
			In both cases, $X$ is $\F$-like.
		\end{proof}
	\end{proposition}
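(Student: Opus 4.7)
The plan is to build the tree $T$ whose infinite branches enumerate all inverse sequences of maps from $\F$, up to a uniform choice of a starting/root space, and then, if the hypothesis of the ``moreover'' clause holds, to encode this countably splitting tree inside $2^{<\omega}$ by padding with identities.

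First I would enlarge $\F$ to $\F' := \F \cup \{c_X\maps X \to X_\emptyset \given X \in \Obj(\F)\}$, where $X_\emptyset$ is a fixed nonempty $\F$-like space (assuming some exists; otherwise the claim is trivial with a degenerate tree) formally distinct from every space in $\Obj(\F)$ and each $c_X$ is a constant. This lets $X_\emptyset$ serve as a common codomain and hence as the root of the tree, without changing the class of $\F$-like spaces, because an inverse sequence with bonding maps in $\F'$ either lies entirely in $\F$ or consists of a single initial $c_X$ followed by maps in $\F$, and the latter has the same limit as the sub-sequence of $\F$-maps. Enumerate $\F'$ as $\tuple{f_n}_{n < A}$ with $A \leq \omega$, and let $T \subseteq \omega^{<\omega}$ consist of $\emptyset$ together with all tuples $t$ such that $\cod(f_{t(0)}) = X_\emptyset$ and $\dom(f_{t(n)}) = \cod(f_{t(n+1)})$ for $n+1 < \len{t}$; set $X_\emptyset$ at the root, $X_t := \dom(f_{t(\len{t}-1)})$ for $t \neq \emptyset$, and $f_{t, t \concat n} := f_n$. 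Then the restriction of $\tuple{X_*, f_*}$ along any branch $\alpha \in T_\infty$ is an inverse sequence in $\F'$ beginning at $X_\emptyset$, so every $X^\alpha_\infty$ is $\F$-like, and conversely every $\F$-like space arises this way by prepending the appropriate $c_X$.

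For the \emph{moreover} clause I would transfer the countably splitting tree $T$ into a binary tree $S \subseteq 2^{<\omega}$ by the standard recoding: define $\varphi\maps \omega^{<\omega} \to 2^{<\omega}$ that replaces each entry $n$ by a block of $n$ ones followed by a zero, so each branching $\set{t\concat k \given k \in \omega}$ is simulated by the infinite binary chain $t \concat 0,\, t \concat 1 \concat 0,\, t \concat 1 \concat 1 \concat 0, \dots$ Let $\psi\maps 2^{<\omega} \to \omega^{<\omega}$ collapse trailing ones, and put $S := \psi\preim{T}$, $Y_s := X_{\psi(s)}$, $g_{s, s \concat 0} := f_{\psi(s), \psi(s \concat 0)}$ and $g_{s, s \concat 1} := \id_{X_{\psi(s)}}$. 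Branches of $S$ containing infinitely many $0$'s correspond bijectively to branches of $T$, with identical inverse limits, because inserting identities does not change the limit space.

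The main obstacle is handling the branches $\beta \in S_\infty$ that contain only finitely many zeros: these are ``spurious'' branches that do not come from $T_\infty$, and along them the inverse sequence is eventually $\id_X$ for a single space $X = X_{\psi(s_0)}$ where $s_0 \in S$ is the last node before the tail of ones. Its limit is just $X$, and we need $X$ to be $\F$-like for the family $\set{Y^\beta_\infty : \beta \in S_\infty}$ to remain equivalent to the class of all $\F$-like spaces. Here is where the hypothesis enters: such a node $s_0$ exists in $S$ only when $\psi(s_0)$ has infinitely many successors in $T$, i.e.\ when $X$ is the codomain of infinitely many maps of $\F'$; since the only map in $\F' \setminus \F$ with codomain $X$ is $c_X$ (at most one), $X$ must be the codomain of infinitely many maps of $\F$, or $X = X_\emptyset$. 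By assumption, both cases give that $X$ is $\F$-like, so every $Y^\beta_\infty$ is still $\F$-like, and conversely every $\F$-like space is realized (already by the branches coming from $T_\infty$). This gives the desired equivalence with $T \subseteq 2^{<\omega}$.
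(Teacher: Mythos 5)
Your proposal is correct and follows essentially the same route as the paper: the same enlargement of $\F$ by constant maps onto a fixed nonempty $\F$-like root $X_{\emptyset}$, the same tree of composable tuples indexed by an enumeration of $\F'$, the same unary recoding $\varphi$ into $2^{<\omega}$ with identity padding, and the same use of the hypothesis to absorb the branches ending in a tail of ones. One small point you share with the paper: for the step ``such a tail occurs only above a node with infinitely many successors'' to be literally true, $S$ should be taken to be the tree generated by $\varphi\im{T}$ rather than all of $\psi\preim{T}$, since the latter contains $\varphi(t) \concat 1^k$ for every $t \in T$ and every $k$, including nodes $t$ with finitely many successors.
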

	
	\begin{proposition} \label{thm:countable_family}
		Let $\F$ be a family of continuous maps such that $\Obj(\F)$ is a countable family of metrizable compacta.
		There is a countable family $\G ⊆ \F$ such that a space is $\F$-like if and only if it is $\G$-like.
		
		\begin{proof}
			For every $X, Y ∈ \Obj(\F)$ let $\F(X, Y)$ denote the family of all maps $f ∈ \F$ such that $f\maps X \to Y$.
			Every $\F(X, Y)$ is a subspace of the space of all continuous maps $X \to Y$ with the topology of uniform convergence, which is separable and metrizable since $X$ and $Y$ are metrizable compacta, and hence $\F(X, Y)$ is also separable.
			Let $\G(X, Y) ⊆ \F(X, Y)$ be a countable dense subset and let $\G$ be the countable family $⋃_{X, Y ∈ \Obj(\F)} \G(X, Y)$.
			
			Clearly, every $\G$-like space is $\F$-like.
			On the other hand, by Brown's approximation theorem \cite[Theorem~3]{Brown}, for every inverse sequence $\tuple{X_*, f_*}$ with bonding maps from $\F$ and fixed metrics on the spaces $X_n$, there is a sequence of numbers $ε_n > 0$ and a sequence of maps $g_n ∈ \G(X_{n + 1}, X_n)$ such that $d(f_n, g_n) < ε_n$ for every $n$ and such that the limit space of $\tuple{X_*, g_*}$ is homeomorphic to the limit space of $\tuple{X_*, f_*}$.
			Therefore, every $\F$-like space is $\G$-like.
		\end{proof}
	\end{proposition}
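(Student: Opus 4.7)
The plan is to extract $\G$ as a countable dense subfamily of $\F$ with respect to the uniform convergence topology on each space of continuous maps between members of $\Obj(\F)$, and then appeal to an approximation theorem for inverse sequences to conclude that being $\F$-like is the same as being $\G$-like.

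For the construction, since every $X, Y \in \Obj(\F)$ is a metrizable compactum, the function space $\Cont(X, Y)$ endowed with the sup metric is separable metrizable, hence so is its subspace $\F(X, Y) := \F \cap \Cont(X, Y)$. I would pick a countable dense subset $\G(X, Y) \subseteq \F(X, Y)$ for each ordered pair in $\Obj(\F)$ and put $\G := \bigcup_{X, Y \in \Obj(\F)} \G(X, Y)$. Because $\Obj(\F)$ is countable, $\G$ is a countable subfamily of $\F$.

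The forward direction, that every $\G$-like space is $\F$-like, is immediate from $\G \subseteq \F$. The reverse direction is where the real work lies: given an inverse sequence $\tuple{X_*, f_*}$ with each $f_n \in \F(X_{n+1}, X_n)$, I need to replace each bonding map by a nearby map from $\G$ without altering the homeomorphism type of the limit. Fixing compatible metrics on the countably many spaces in $\Obj(\F)$, I would invoke Brown's approximation theorem, which guarantees tolerances $\varepsilon_n > 0$ (depending on $f_*$ and the chosen metrics) such that any sequence $g_n \in \Cont(X_{n+1}, X_n)$ with $d(f_n, g_n) < \varepsilon_n$ produces a sequence $\tuple{X_*, g_*}$ whose limit is homeomorphic to the limit of $\tuple{X_*, f_*}$. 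By density of $\G(X_{n+1}, X_n)$ in $\F(X_{n+1}, X_n)$ I may select each $g_n$ inside $\G$, exhibiting the original $\F$-like limit as a $\G$-like space.

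The main obstacle is citing and applying the approximation theorem correctly: one needs that the tolerances $\varepsilon_n$ can be chosen in advance so that the resulting perturbed inverse sequence has a homeomorphic limit, and this is exactly the content of Brown's theorem. Everything else is a routine separability argument together with the trivial direction.
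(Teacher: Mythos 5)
Your proposal is correct and follows essentially the same route as the paper: separability of the uniform-convergence function spaces $\Cont(X,Y)$ for metrizable compacta $X, Y$ yields countable dense subsets $\G(X,Y) \subseteq \F(X,Y)$, and Brown's approximation theorem supplies the tolerances $\varepsilon_n$ ensuring the perturbed inverse sequence has a homeomorphic limit. Nothing is missing.
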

	
	Now we combine the previous propositions into the following theorem.
	
	\begin{theorem} \label{thm:F-like}
		Let $\F$ be a family of continuous maps.
		\begin{enumerate}
			\item If $\F$ is countable and $\Obj(\F)$ is a class of Polish spaces, then the class of all $\F$-like spaces is Polishable.
			\item If $\Obj(\F)$ is a countable family of metrizable compacta such that every $X ∈ \Obj(\F)$ is $\F$-like (in particular if $\id_X ∈ \F$), then the class of all $\F$-like spaces is compactifiable.
		\end{enumerate}
		
		\begin{proof}
			By Proposition~\ref{thm:countable_family} we may suppose that $\F$ is countable also in the compact case.
			Using Proposition~\ref{thm:tree_from_maps} we build a tree $T ⊆ ω^{<ω}$ and a $T$-inverse system such that $\F$-like spaces are exactly the limit spaces along the branches.
			Moreover, in the compact space our tree can be made finitely splitting.
			Finally, we build a \alt{Polish}{compact} composition of the class of all $\F$-like spaces using Construction~\ref{con:tree}.
		\end{proof}
	\end{theorem}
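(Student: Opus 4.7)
The plan is to string together the three preceding results, so the only real work is tracking which hypothesis feeds which ingredient. In the Polish case I start directly from $\F$ itself; in the compact case I first invoke Proposition~\ref{thm:countable_family} to replace $\F$ by a countable subfamily $\G$ with the same class of $\F$-like spaces (this is where metrizability and compactness of $\Obj(\F)$ are used, via separability of function spaces and Brown's approximation theorem). After this reduction, both cases have a countable family of maps to work with.

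Next I would feed the (now countable) family into Proposition~\ref{thm:tree_from_maps} to obtain a subtree $T$ of $\omega^{<\omega}$ and a $T$-inverse system $\tuple{X_*, f_*}$ whose restrictions along infinite branches realize, up to equivalence, exactly the class of $\F$-like spaces. In the Polish case I take $T \subseteq \omega^{<\omega}$ as given. In the compact case I need the finitely-splitting refinement: here I must check that the hypothesis ``every $X \in \Obj(\F)$ is $\F$-like'' lets me apply the ``moreover'' clause of Proposition~\ref{thm:tree_from_maps} to obtain $T \subseteq 2^{<\omega}$; this is immediate because that clause asks precisely that every space serving as a codomain of infinitely many maps be $\F$-like, and in the present construction that role is played by members of $\Obj(\F)$ (once one also notes, as in the proof of Proposition~\ref{thm:tree_from_maps}, that the auxiliary space $X_\emptyset$ can be chosen to be any fixed $\F$-like space, for instance a member of $\Obj(\F)$).

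Finally I apply Construction~\ref{con:tree} to $\tuple{X_*, f_*}$. It outputs a composition $\A(q\maps X^\oplus_\infty \to T_\infty)$ of the family $\tuple{X^\alpha_\infty}_{\alpha \in T_\infty}$, which by the previous step is equivalent to the class of all $\F$-like spaces. In the Polish case all $X_t$ lie in $\Obj(\F)$ and are therefore Polish, so the conclusion of Construction~\ref{con:tree} gives a Polish composition, proving~(i). In the compact case $T \subseteq 2^{<\omega}$ is finitely splitting and all $X_t$ are metrizable compacta, so the same conclusion gives a compact composition, proving~(ii).

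No step is genuinely hard, since all the technical content has been absorbed into the lemmas; the only point that needs care is the compact case's ``finitely splitting'' requirement, which is exactly why the extra hypothesis ``every $X \in \Obj(\F)$ is $\F$-like'' appears in the statement and why the parenthetical example ``in particular, $\id_X \in \F$'' works (the identity witnesses $X$ as the limit of the constant inverse sequence). The proof itself is therefore just four or five lines assembling these citations.
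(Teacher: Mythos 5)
Your proposal is correct and follows exactly the paper's own argument: reduce to a countable family via Proposition~\ref{thm:countable_family} in the compact case, build the tree and $T$-inverse system via Proposition~\ref{thm:tree_from_maps} (finitely splitting in the compact case thanks to the ``every $X \in \Obj(\F)$ is $\F$-like'' hypothesis), and conclude with Construction~\ref{con:tree}. The one detail you gloss over --- that after replacing $\F$ by the countable subfamily $\G$ the hypothesis of the ``moreover'' clause still holds for $\G$ --- is immediate since $\Obj(\G) \subseteq \Obj(\F)$ and $\G$-likeness coincides with $\F$-likeness, so nothing is missing.
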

	
	\begin{corollary}
		For a countable family $\P$ of metrizable compacta, the class of all $\P$-like spaces is compactifiable.
	\end{corollary}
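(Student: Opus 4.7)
The plan is to derive this directly from Theorem~\ref{thm:F-like}(2). By definition, a space is $\P$-like exactly when it is $\F$-like for $\F$ the class of all continuous surjections between spaces from $\P$. So it suffices to verify that this particular $\F$ meets the hypotheses of the compact case of Theorem~\ref{thm:F-like}.

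Since every map in $\F$ goes between members of $\P$, I have $\Obj(\F) ⊆ \P$, so $\Obj(\F)$ is automatically a countable family of metrizable compacta. For the second hypothesis, observe that for each $X ∈ \P$ the identity $\id_X\maps X \to X$ is a continuous surjection between members of $\P$, hence $\id_X ∈ \F$. The parenthetical remark in Theorem~\ref{thm:F-like}(2) then gives that every $X ∈ \Obj(\F)$ is $\F$-like, as the limit of the constant inverse sequence of identities.

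Applying Theorem~\ref{thm:F-like}(2) produces a compact composition of the $\F$-like spaces, which by the initial observation are precisely the $\P$-like spaces. There is no real obstacle at the level of this corollary: all the substantive work — reducing to a countable subfamily of $\F$ via Proposition~\ref{thm:countable_family}, encoding the resulting inverse system as a finitely splitting subtree of $2^{<ω}$ via Proposition~\ref{thm:tree_from_maps}, and assembling the branch-limits into a compact composition via Construction~\ref{con:tree} — is already packaged inside Theorem~\ref{thm:F-like}.
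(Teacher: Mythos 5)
Your proposal is correct and matches the paper's intent: the corollary is stated as an immediate consequence of Theorem~\ref{thm:F-like}(2), and you verify exactly the right hypotheses — $\Obj(\F) \subseteq \P$ is a countable family of metrizable compacta, and $\id_X \in \F$ for every $X \in \P$ since identities are continuous surjections between members of $\P$. Note also that the possible uncountability of $\F$ itself is not an obstacle, as you implicitly observe, since Theorem~\ref{thm:F-like}(2) only constrains $\Obj(\F)$ and handles the reduction to a countable subfamily internally via Proposition~\ref{thm:countable_family}.
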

	
	\begin{remark}
		The class of all arc-like continua is strongly compactifiable by Corollary~\ref{thm:universal} since there is a universal arc-like continuum.
		Theorem~\ref{thm:F-like} gives another way to prove that the class of all arc-like continua is compactifiable.
		In fact, Construction~\ref{con:tree} is based on \cite[Theorem~12.22]{Nadler}, where a universal arc-like continuum is constructed.
		The difference is that in \cite[Theorem~12.22]{Nadler} all spaces $X_t$ are copies of the unit interval, the spaces $X^\oplus_n$ are extended to bigger arcs $A_n$, and the surjections $f^\oplus_n\maps X^\oplus_n \from X^\oplus_{n + 1}$ are continuously extended to surjections $g\maps A_n \from A_{n + 1}$, so we get an arc-like continuum $A_∞ ⊇ X^\oplus_∞$ as limit.
		However, such extension cannot be done with circles.
		In fact, there is no universal circle-like continuum (Observation~\ref{thm:no_universal_circle-like}).
		Yet, by Theorem~\ref{thm:F-like} the class of all circle-like continua is compactifiable.
		Because of this and Corollary~\ref{thm:universal}, a compact composition may be viewed as a weaker form of a universal element.
	\end{remark}
	
	\begin{observation} \label{thm:no_universal_circle-like}
		There is no universal circle-like continuum.
		
		\begin{proof}
			Let $\tuple{X_*, f_*}$ be an inverse sequence of circles and continuous surjections.
			We will show that if $S^1 ⊆ X_∞$, then already $S^1 = X_∞$, so $X_∞$ cannot be universal.
			
			Let us divide $S^1$ into four quarter-arcs $A_k := \set{e^{ix}: x ∈ [k \frac{π}{2}, (k + 1) \frac{π}{2}]}$, $k ∈ \set{0, 1, 2, 3}$.
			There is $n$ such that $f_{n, ∞}\im{A_0} ∩ f_{n, ∞}\im{A_2} = ∅ = f_{n, ∞}\im{A_1} ∩ f_{n, ∞}\im{A_3}$.
			Necessarily, the same condition holds for every $f_{m, ∞}$ where $m ≥ n$.
			We have that $f_{n, ∞}\restr{S^1}$ is onto.
			Otherwise, $A := f_{n, ∞}\im{S^1}$ is an arc, $f_{n, ∞}\im{A_0}$ and $f_{n, ∞}\im{A_2}$ are its disjoint subcontinua, and no two subarcs of $A$ meeting both $f_{n, ∞}\im{A_0}$ and $f_{n, ∞}\im{A_2}$ are disjoint, which is a contradiction with disjointness of $f_{n, ∞}\im{A_1}$ and $f_{n, ∞}\im{A_3}$.
			
			We have shown that $f_{m, ∞}\restr{S^1}$ is onto for every $m ≥ n$.
			But for $x ∈ X_∞ \setminus S^1$ there is $m ≥ n$ such that $f_{m, ∞}(x) ∉ f_{m, ∞}\im{S^1} = X_m$, which is a contradiction.
		\end{proof}
	\end{observation}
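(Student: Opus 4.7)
The plan is to prove the stronger statement that every inverse limit $X_\infty$ of circles that contains an embedded copy of $S^1$ must coincide with that copy. Since the class of circle-like continua contains plenty of continua not homeomorphic to $S^1$ (solenoids, for instance) and also contains $S^1$ itself, this immediately rules out any universal element: a hypothetical universal circle-like continuum would embed $S^1$ and therefore equal $S^1$, which does not embed a solenoid.

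First I would fix an inverse sequence $\tuple{X_*, f_*}$ of circles and continuous surjections with $S^1 \subseteq X_\infty$, and cut $S^1$ into four quarter-arcs $A_0, A_1, A_2, A_3$ arranged cyclically. The preparatory step is to produce an index $n$ such that for every $m \geq n$ the images $f_{m, \infty}\im{A_0}$ and $f_{m, \infty}\im{A_2}$ are disjoint, and likewise $f_{m, \infty}\im{A_1}$ and $f_{m, \infty}\im{A_3}$. I would get this from a compactness argument: the sets $R_m := \set{\tuple{p, q} \in A_0 \times A_2 \given f_{m, \infty}(p) = f_{m, \infty}(q)}$ form a decreasing sequence of closed subsets of $A_0 \times A_2$, and collective injectivity of the limit projections on $X_\infty$ combined with $A_0 \cap A_2 = \emptyset$ forces their intersection to be empty, so some $R_m$ is already empty by compactness; the same argument handles the other pair.

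The heart of the proof is to deduce from this separation that $f_{m, \infty}\im{S^1} = X_m$ for every $m \geq n$. I would argue by contradiction: if $f_{m, \infty}\im{S^1}$ were a proper subcontinuum of the circle $X_m$, it would be an arc $A$. In an arc, disjoint subcontinua are linearly ordered, so the disjoint subarcs $f_{m, \infty}\im{A_0}$ and $f_{m, \infty}\im{A_2}$ lie on opposite sides of a gap, and every subcontinuum of $A$ that meets both of them must contain that gap. Since adjacent quarter-arcs share an endpoint, both $f_{m, \infty}\im{A_1}$ and $f_{m, \infty}\im{A_3}$ meet both $f_{m, \infty}\im{A_0}$ and $f_{m, \infty}\im{A_2}$, hence both contain the gap and therefore intersect — contradicting the separation of $A_1$ from $A_3$. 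I expect this little geometric lemma on arcs to be the main obstacle, though it is elementary.

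The final step uses $f_{m, \infty}\im{S^1} = X_m$ to force $X_\infty \subseteq S^1$. If some $x \in X_\infty \setminus S^1$ existed, Hausdorffness of $X_\infty$ and the description of basic open sets of an inverse limit would supply $m \geq n$ and an open $V \subseteq X_m$ with $f_{m, \infty}(x) \in V$ and $f_{m, \infty}\preim{V} \cap S^1 = \emptyset$, hence $V \cap f_{m, \infty}\im{S^1} = \emptyset$, contradicting $f_{m, \infty}\im{S^1} = X_m \ni f_{m, \infty}(x)$. Therefore $X_\infty = S^1$, and together with the existence of non-circle circle-like continua this concludes the argument.
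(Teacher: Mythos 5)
Your proposal is correct and follows essentially the same route as the paper's proof: separate the images of opposite quarter-arcs at some level, use the arc geometry to show $f_{m,\infty}\restr{S^1}$ is onto $X_m$ for $m \geq n$, and conclude $X_\infty = S^1$. The extra details you supply (the compactness argument producing $n$, and the explicit appeal to solenoids to rule out universality) are correct fillings-in of steps the paper leaves implicit, not a different approach.
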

	
	We wonder if the constructions from this chapter may be modified to obtain strong compact or strong Polish compositions.
	In particular, we have the following question.
	
	\begin{question}
		Is the class of all circle-like continua strongly compactifiable?
	\end{question}

	\begin{acknowledgements}
		The paper is an outgrowth of joint research of the Open Problem Seminar, Charles University, Prague.
		A. Bartoš was supported by the grant projects GAUK 970217 and SVV-2017-260456 of Charles University, 
		and by the grant project GA17-27844S of Czech Science Foundation (GAČR) with institutional support RVO 67985840.
		J. Bobok was supported by the European Regional Development Fund, project No.~CZ.02.1.01/0.0/0.0/16\_019/0000778.
		B. Vejnar was supported by Charles University Research Centre program No.UNCE/SCI/022 and GAČR 17-00941S.
	\end{acknowledgements}
	
	\linespread{1}\selectfont
	
	\bibliographystyle{siam}
	\bibliography{references}
	
\end{document}